\journalname{Journal of Scientific Computing}
\begin{document}
\begin{sloppypar}
\title{Proximal Gradient Descent Ascent Methods for Nonsmooth Nonconvex-Concave Minimax Problems on Riemannian Manifolds
%\thanks{Grants or other notes
%about the article that should go on the front page should be
%placed here. General acknowledgments should be placed at the end of the article.}
}
% \subtitle{Do you have a subtitle?\\ If so, write it here}

\titlerunning{PGDA for Nonsmooth NC-C Minimax Problems
on Riemannian Manifolds}        % if too long for running head

\author{Xiyuan Xie \and Qia Li        
       %etc.
}

%\authorrunning{Short form of author list} % if too long for running head
\institute{Xiyuan Xie \at
              School of Mathematics, Sun Yat-sen University, Guangzhou 510275, China \\
              \email{xiexy65@mail2.sysu.edu.cn}
           \and
           % Lihua Yang \at
           %    School of Mathematics, Guangdong Provincial Key Laboratory of Computational Science, Sun Yat-sen University, Guangzhou 510275, China \\
           %    \email{mcsylh@mail.sysu.edu.cn}
           % \and
           Qia Li \at
              School of Computer Science and Engineering, Guangdong Province Key Laboratory of Computational Science, Sun Yat-sen University, Guangzhou 510275, China \\
              Corresponding author. \\
              \email{liqia@mail.sysu.edu.cn}
}
% \institute{F. Author \at
%               first address \\
%               Tel.: +123-45-678910\\
%               Fax: +123-45-678910\\
%               \email{fauthor@example.com}           %  \\
% %             \emph{Present address:} of F. Author  %  if needed
%            \and
%            S. Author \at
%               second address
% }

\date{Received: date / Accepted: date}
% The correct dates will be entered by the editor

\maketitle

\begin{abstract}
Nonsmooth nonconvex-concave minimax problems have attracted significant attention due to their wide applications in many fields. In this paper, we consider a class of nonsmooth nonconvex-concave minimax problems on Riemannian manifolds. Owing to the nonsmoothness of the objective function, existing minimax manifold optimization methods cannot be directly applied to solve this problem. We propose two manifold proximal gradient descent ascent (MPGDA) algorithms for solving the problem. The first  algorithm alternatively performs one or multiple manifold proximal gradient descent steps and a proximal ascent step at each iteration, and we prove that it can find an $\varepsilon$-game-stationary point and an $\varepsilon$-optimization-stationary point  within $\mathcal{O}(\varepsilon^{-3})$ outer iterations. The second algorithm alternatively performs one manifold proximal gradient descent step and a proximal gradient ascent step, and we show that it can reach an $\varepsilon$-game-stationary point and an $\varepsilon$-optimization-stationary point within $\mathcal{O}(\varepsilon^{-4})$ outer iterations. Numerical experiments on an analytic example,  fair sparse PCA, and sparse spectral clustering are conducted to illustrate the advantages of the proposed algorithms.

\keywords{nonconvex-concave minimax problem \and nonsmooth manifold optimization \and manifold proximal gradient descent ascent \and iteration complexity}
% \PACS{PACS code1 \and PACS code2 \and more}
\subclass{90C26 \and 90C30 \and 65K05}
\end{abstract}

\section{Introduction}
\label{intro}
In this paper, we consider a class of nonsmooth  nonconvex-concave minimax optimization problems formulated as follows:
\begin{align}\label{Prob}
		\min_{x\in \mathcal{M}}\max_{y\in S}F(x, y):=f(x, y)+h(x)-g(y), 
\end{align}
where $\mathcal{M}$ is a Riemannian manifold embedded in a finite-dimensional Euclidean space $\mathbb{E}_1$,  and $S$ is a convex set in another finite-dimensional Euclidean space $\mathbb{E}_2$, $f:\mathbb{E}_1\times \mathbb{E}_2\to \mathbb{R}$ is a continuously differentiable function and is concave with respect to y,  $h:\mathbb{E}_1\to \mathbb{R}$ and $g:\mathbb{E}_2\to \mathbb{R}$ are some proper closed convex (possibly nonsmooth) functions. Throughout this paper, we adopt the following assumptions for problem \eqref{Prob}.

\begin{assumption}\label{Assumption}
\par
~~

        \textbf{$(i)$} The manifold $\mathcal{M}$ and the convex set $S$ are compact. Let $\sigma_y:=\sup_{y\in S}\left\|y\right\|<\infty$.       
        
        \textbf{$(ii)$} The function $f$ has Lipschitz continuous gradient with respect to $x$, i.e., there exists a Lipschitz constant $L_x>0$ such that  \[\left\|\nabla_x f(x_1, y)-\nabla_x f(x_2, y)\right\|\leq L_x\left\|x_1-x_2\right\|\]
    for any $x_1,x_2\in \mathcal{M}$ and $y\in S$.

        \textbf{$(iii)$} The function $\nabla_y f(x,y)$ is  Lipschitz continuous with respect to $(x,y)$, i.e., there exists a Lipschitz constant  $L_y>0$ such that \[\left\|\nabla_y f(x_1, y_1)-\nabla_y f(x_2, y_2)\right\|^2\leq L_y^2(\left\|x_1-x_2\right\|^2+\left\|y_1-y_2\right\|^2) \]
    for any $(x_1, y_1)$, $(x_2, y_2)\in \mathcal{M}\times S$.
    
        \textbf{$(iv)$} The function $h$ is $L_h$ Lipschitz continuous.

   \textbf{$(v)$} For all $u\in\mathbb{E}_1,~w\in\mathbb{E}_2$ and $\alpha>0$, the following two proximal problems can be solved exactly and efficiently: 
   \begin{subequations}
       \begin{align}
           \min_{x\in\mathbb{E}_1} h(x)+\frac{1}{2\alpha}\|x-u\|^2,\label{proximal-h}\\
           \min_{y\in S} g(y)+\frac{1}{2\alpha}\|y-w\|^2.\label{proximal-g}
       \end{align}
   \end{subequations}
\end{assumption}

Problem \eqref{Prob} has many important applications in machine learning and signal processing. Note that if $f(x,y)$ is linear with respect to $y$ for fixed $x$, then $f(x,y) = \langle \mathcal{A}(x), y \rangle$ for some smooth operator $\mathcal{A}:\mathbb{E}_1\to\mathbb{E}_2$. Below, we present two representative examples and refer the interested readers to \cite{chenProximalGradientMethod2020,BriefIntroductiontoManifoldOptimization} for more examples.

\begin{example}
   \textbf{Fair Sparse Principal Component Analysis (FSPCA)}. The classical PCA \cite{Hotelling1933Analysis} is one of the most widely used dimensionality reduction techniques, which minimizes the total reconstruction error over the whole samples. However, in real-life applications, the data samples come from diverse classes and PCA  may exhibit a higher reconstruction error for certain classes than the others. To address this disparity, the fair PCA is  proposed \cite{samadiPriceFairPCA}. 
Suppose the $m$ data samples belong to $n$ classes, and each class $i$ corresponds to $A_i\in\mathbb{R}^{m_i\times d}$ with $d$ being the dimension of the samples, $m_i$ being the number of samples in the $i$-th class, and $\sum_{i=1}^n m_i = m$. Then the fair  PCA can be formulated as:
\begin{align} \min_{X\in \mathrm{St}(d, r)}\max_{i=1, \dots, n}-\mathrm{Tr}(X^\top A_i^\top A_iX), 
\end{align}
where  $r<d$ is the number of principal components, $\mathrm{Tr}(\cdot)$ represents the trace and $\mathrm{St}(d, r):=\{X\in\mathbb{R}^{d\times r}|X^\top X=I_r\}$ with $I_r$ denoting the $r\times r$ identity matrix. Recently, the fair sparse PCA is further proposed to promote sparsity in the principal components \cite{babuFairPrincipalComponent2023} as follows:
\begin{align} \label{SFP1} \min_{X\in \mathrm{St}(d, r)}\max_{i=1, \dots, n}-\mathrm{Tr}(X^\top A_i^\top A_iX)+\mu \left\|X\right\|_1, 
\end{align} 
where $\mu>0$ is a weighting parameter and 
$\left\|X\right\|_1:=\sum_{i,j}\left|x_{ij}\right|$ is the $\ell_1$-norm of the matrix $X$. In view of \cite[Example~4.10]{Beck2017First}, problem \eqref{SFP1} can be equivalently reformulated into 
\begin{align}\label{FSPCA}
    \min_{X\in \mathrm{St}(d, r)}\max_{y\in\Delta_n}-\sum_{i=1}^n y_i\mathrm{Tr}(X^\top A_i^\top A_iX)+\mu \left\|X\right\|_1.
\end{align}
where $\Delta_n:=\{y|\sum_{i=1}^n y_i=1, y_i\geq0, i=1, \dots, n\}$ is the standard simplex in $\mathbb{R}^n$. Note that problem \eqref{FSPCA} is an instance of  problem \eqref{Prob} with $\mathcal{M}$ being the Stiefel manifold $\mathrm{St}(d,r)$, $S$ being the standard simplex $\Delta_n$, $h(X)=\mu\|X\|_1$ and $f(X,y)=-\sum_{i=1}^n y_i\mathrm{Tr}(X^\top A_i^\top A_iX)$.
\end{example}

\begin{example}\textbf{Sparse Spectral Clustering (SSC)}. The SSC problem aims to partition $N$ data samples into 
$p$ groups such that similar data points are clustered together. In spectral clustering, a symmetric affinity matrix $W = [W_{ij}]_{N\times N}$ is constructed, where $W_{ij} \geq 0$
 quantifies the pairwise similarity between samples $a_i$ and $a_j$. To promote sparsity and interpretability, the sparse spectral clustering  framework is introduced in \cite{2016ConvexSparseSpectralClustering},  as follows:
\begin{align}\label{SSC1}
    \min_{X\in\mathrm{St}(N,p)}\langle L, XX^{\top}\rangle+\mu\|XX^{\top}\|_1,
\end{align}
where $\mu > 0$ is the regularization parameter and $L = I_N -S^{-1/2}WS^{-1/2}$ is the normalized Laplacian matrix with
$S^{1/2}$ being the diagonal matrix with diagonal elements $\sqrt{s_1}, \sqrt{s_2}, \dots, \sqrt{s_n}$
and $s_i = \sum_{j} W_{ij}$. It is not hard to see that problem \eqref{SSC1} can be equivalently rewritten as:
\begin{align}\label{SSC2}
    \min_{X\in\mathrm{St}(N,p),~Z\in\mathbb{R}^{N\times N}}\langle L, XX^{\top}\rangle+\mu\|Z\|_1+\mu\|XX^{\top}-Z\|_1.
\end{align}
By the dual representation of the $\ell_1$-norm, i.e., $\|\cdot\|_1=\max_{\|Y\|_\infty \leq1}\langle Y,\cdot\rangle$,  problem \eqref{SSC2} can be further equivalently reformulated as:  
\begin{align}\label{SSC3}
    \min_{X\in\mathrm{St}(N,p),~Z\in\mathbb{R}^{N\times N}}\max_{\|Y\|_\infty \leq \mu}\langle L, XX^{\top}\rangle+\mu\|Z\|_1+\langle Y, XX^{\top}-Z\rangle.
\end{align}
Again, problem \eqref{SSC3} is an instance of problem \eqref{Prob} with $\mathcal{M}$ being the product manifold $\mathrm{St}(N,p)\times \mathbb{R}^{N\times N}$, $S=\{Y\in\mathbb{R}^{N\times N}\mid \|Y\|_{\infty}\leq \mu\}$, $h(X,Z)=\mu \|Z\|_1$ and $f(X,Z,Y)=\langle L,XX^{\top}\rangle+\langle Y, XX^{\top}-Z\rangle$.
\end{example}

 In recent years, nonconvex-concave (NC-C) minimax problems have received a tremendous amount of attention as they have found broad applications in diverse areas. Many studies focus on NC-C problems in the setting where both the constraints on $x$ and $y$ are closed convex sets and numerous methods have been developed, {see, e.g., \cite{li2026smoothing,linGradientDescentAscent,linOptimalAlgorithmsMinimax,nouiehedSolvingClassNonConvex,ostrovskiiEfficientSearchFirstOrder2021,panEfficientAlgorithmNonconvexlinear2021a,2022Zeroth,thekumparampilEfficientAlgorithmsSmooth,xuUnifiedSingleloopAlternating2023,yangFasterSingleloopAlgorithms,zhangSingleLoopSmoothedGradient}.} In contrast, there are relatively few works studying NC-C minimax problems in the setting where the constraint on $x$ is a Riemannian  submanifold and the constraint on $y$ is a closed convex set. Among these works, Huang and Gao \cite{huangGradientDescentAscent2023} propose a Riemannian gradient descent-ascent algorithm for a smooth nonconvex-strongly concave (NC-SC) minimax problem and establish its iteration complexity of $\mathcal{O}(\varepsilon^{-2})$ for achieving $\varepsilon$-stationarity. Later, Xu et al. \cite{xuEfficientAlternatingRiemannian2024} present an alternating Riemannian/projected gradient descent-ascent (ARPGDA) algorithm with an iteration complexity of $\mathcal{O}(\varepsilon^{-3})$ for finding an $\varepsilon$-stationary point of smooth Riemannian nonconvex-linear (NC-L) minimax problem. Recently, for problem \eqref{Prob} with additional assumptions that $h$ is smooth (but not necessarily convex) and $f(x,y)$ is linear with respect to $y$ for fixed $x$, the work \cite{xuRiemannianAlternatingDescent2024} extends the technique in \cite{heApproximationProximalGradient2024} and develops algorithms that can reach $\varepsilon$-stationary points within $\mathcal{O}(\varepsilon^{-3})$ iterations. {Very recently, Aybat, Hu, and Deng \cite{aybat2025retraction} propose a retraction-free smoothed manifold gradient descent-ascent method to tackle the case where $h=0$ and $f(x,y)-g(y)$ is concave with respect to $y$ for fixed $x$. Their algorithm can reach an $\varepsilon$-stationary point within \(\mathcal{O}(\varepsilon^{-4})\) iterations, which can be improved to \(\mathcal{O}(\varepsilon^{-3})\) by incorporating a Tikhonov regularization under suitable initialization conditions (see Remark 11 of \cite{aybat2025retraction}).}
 It is worth noting that none of the methods in the aforementioned literature \cite{huangGradientDescentAscent2023,xuRiemannianAlternatingDescent2024,xuEfficientAlternatingRiemannian2024,aybat2025retraction} can directly tackle problem \eqref{Prob} due to the following two reasons: (i) the convex function $h$ may be nonsmooth, and (ii) the function $f(x, y)$ is neither linear nor strongly concave with respect to $y$ for fixed $x$. Based on the needs of applications and the above background, it is highly desirable to develop efficient methods with theoretical convergence guarantees for solving the Riemannian nonsmooth NC-C minimax problem \eqref{Prob}.

The main contributions of this paper are summarized below. First, we introduce the notions of  optimization stationarity as well as  game stationarity for problem \eqref{Prob} and carefully study their relationships. Second, we propose two manifold proximal gradient descent ascent (MPGDA) algorithms for solving problem \eqref{Prob}. The first algorithm, denoted by MPGDA-PA, performs one or multiple manifold proximal gradient descent steps to update $x$ and one proximal ascent step to update $y$ at each iteration. Note that the proximal ascent subproblem with respect to $y$ can generally be solved by FISTA \cite[Theorem~10.41]{Beck2017First} with a linear convergence rate. In particular, for the case of $f$ being linear with respect to $y$, addressing this subproblem reduces to solving a proximal problem in the form of  \eqref{proximal-g}. The second algorithm, denoted by MPGDA-PGA, also performs one manifold proximal gradient descent step to update $x$ but uses a single proximal gradient ascent step to renew $y$ at each iteration. {It is worth noting that the $y$-update in MPGDA-PGA requires only a single evaluation of the gradient $\nabla_y f$ and one step of solving  the proximal problem \eqref{proximal-g}.}  This has much lower computational cost than that of MPGDA-PA  in the case where $f$ is not linear with respect to $y$. Third, we prove that the MPGDA-PA algorithm can find an $\varepsilon$-game-stationary point of problem (1) within $\mathcal{O}(\varepsilon^{-3})$ outer iterations, while the MPGDA-PGA algorithm achieves this within $\mathcal{O}(\varepsilon^{-4})$ outer iterations. Finally, we illustrate the proposed algorithms via both analytic and simulated numerical examples.

The rest of the paper is organized as follows. Section \ref{sec:2} presents preliminaries that will be used in this paper. Section \ref{sec:3} discusses the optimality condition of problem \eqref{Prob}. Section \ref{sec:4} introduces the proposed MPGDA-PA method and establishes its iteration complexity. Section \ref{sec:5} introduces the proposed MPGDA-PGA method and establishes its iteration complexity. Section \ref{sec:6} reports the numerical experiments.

\section{Preliminaries}\label{sec:2}
 In this section, we recall some basic notations and preliminary results which will be used in this paper. Let 
 $\mathbb{E}$ represent a finite-dimensional Euclidean space with inner product $\langle \cdot, \cdot\rangle$ and the induced norm $\left\|\cdot\right\|$. For a linear operator $\mathcal{L}:\mathbb{E}_1\to \mathbb{E}_2 $, let $\mathrm{rank}(\mathcal{L})$ denote the dimension of its image space $\{y\in\mathbb{E}_2\mid y=\mathcal{L}(x),~x\in \mathbb{E}_1\}$ and $\mathrm{ker}(\mathcal{L})$ denote its null space $\{x\in\mathbb{E}_1\mid \mathcal{L}(x)=0\}$ . Given a point $x\in\mathbb{E}$ and a set $C\subseteq \mathbb{E}$, let  $\mathrm{dist}(x,C):=\inf_{y\in C}\|y-x\|$ and $\delta_C$ denote the indicator function associated with $C$. For a function $\varphi(x,y):\mathbb{E}_1\times\mathbb{E}_2\to \mathbb{R}$, let  $\nabla_x \varphi(x, y)$ and $\nabla_y \varphi(x, y)$ respectively denote the partial gradients of $\varphi(x, y)$ with respect to $x$ and $y$ at the point $(x,y)$. Let $\mathcal{M}$ represent a Riemannian submanifold $\mathbb{E}_1$. For a function $\Phi:\mathcal{M}\to\mathbb{R}$, $\mathrm{grad} \Phi(x)$ denotes the Riemannian gradient of $\Phi$. For a function $\varphi:\mathcal{M}\times\mathbb{E}_2\to \mathbb{R}$, $\mathrm{grad}\varphi(\cdot,y)$ denotes the Riemannian gradient of $\varphi(\cdot,y)$ with $y$ fixed. Let $\overline{\mathbb{R}}:=\mathbb{R}\{\pm\infty\}$.
 
\subsection{Riemannian submanifold} 
We first review the definition of an embedded submanifold in a Euclidean space $\mathbb{E}$. We say that function $\psi:U\subset\mathbb{E}\to\mathbb{R}^k$ is smooth if it is infinitely differentiable on $U$. Let $\mathrm{D}\psi(x)$ denote its differential at $x$.

\begin{definition}\cite[Definition~3.10]{boumal2023intromanifolds}
Let $\mathbb{E}$ be a Euclidean space of dimension $d$. A nonempty subset $\mathcal{M}$ of $\mathbb{E}$ is called an embedded submanifold of $\mathbb{E}$ of dimension $ n $ if either $ n = d $ and $ \mathcal{M} $ is open in $ \mathbb{E}$  or
     $ n = d - k $ for some $ k \geq 1 $ and, for each $ x \in \mathcal{M} $, there exists a neighborhood $ U $ of $ x $ in $ \mathbb{E} $ and a smooth function $ \psi: U \to \mathbb{R}^k $ such that:
    \begin{enumerate}
        \item If $ y $ is in $ U $, then $ \psi(y) = 0 $ if and only if $ y \in \mathcal{M} $; and
        \item $ \mathrm{rank}(\mathrm{D}\psi(x)) = k $.
    \end{enumerate}
Such a function $ \psi $ is called a local defining function for $ \mathcal{M}$ at $ x $.
\end{definition}
Let $\mathcal{M}$ be an embedded submanifold of $\mathbb{E}$. The tangent space $T_x\mathcal{M}$ to $\mathcal{M}$ at $x\in\mathcal{M}$ is identified with $\mathbb{E}$ when $\mathcal{M}$ is open in $\mathbb{E}$;  otherwise,  $T_x\mathcal{M}=\mathrm{ker}(\mathrm{D}\psi(x))$ , where $\psi$ is a  local defining function for $\mathcal{M}$ at $x$ \cite[Theorem~3.15]{boumal2023intromanifolds}. The embedded submanifold $\mathcal{M}$ is called a Riemannian submanifold of $\mathbb{E}$, when equipped with the Riemannian metric induced by the ambient space $\mathbb{E}$. Namely, for each $x\in \mathcal{M}$, the inner product on $T_x\mathcal{M}$ is defined as $\langle u,v\rangle_x:=\langle u,v\rangle$ for $u,~v\in T_x\mathcal{M}$. The Riemannian gradient of a smooth function $\varphi:\mathbb{E}\to\mathbb{R}$ at a point $x\in\mathcal{M}$ is given by $\mathrm{grad} \varphi(x):=\mathrm{Proj}_{T_{x}\mathcal{M}}(\nabla \varphi(x))$, where $\mathrm{Proj}_{T_{x}\mathcal{M}}(\cdot)$ is the Euclidean projection operator onto $T_x\mathcal{M}$.  The normal space $N_x\mathcal{M}$ of a Riemannian submanifold $\mathcal{M}$ of $\mathbb{E}$ is the orthogonal complement of the tangent space $T_x\mathcal{M}$ in $\mathbb{E}$ \cite[Definition~5.48]{boumal2023intromanifolds}. 

The tangent bundle of $\mathcal{M}$ is the disjoint union of its tangent spaces : $T\mathcal{M}:=\{(x,v)\mid x\in\mathcal{M},~v\in T_x\mathcal{M}\}$.  A retraction on a manifold $\mathcal{M}$ is a smooth mapping $R$ from the tangent bundle $T\mathcal{M}$ onto $\mathcal{M}$ such that, for any $(x,v)\in T_x\mathcal{M} $,  its restriction $R_x:T_x\mathcal{M}\to \mathcal{M}$  satisfies (i) $R_x(0) = x$ and (ii) $\mathrm{D}R_x(0)[v]=v$ \cite[Definition~3.47]{boumal2023intromanifolds}.

The concept of a retraction offers a practical link between manifold and its tangent bundle. The following properties of retractions are needed for convergence analysis in this paper.
\begin{proposition}\cite[Appendix~B]{boumalGlobalRatesConvergence2019}\label{retr} Let $\mathcal{M}$ be a compact Riemannian submanifold of  Euclidean space $\mathbb{E}$ equipped with norm $\left\|\cdot\right\|$. $R:T\mathcal{M} \to \mathcal{M}$ is a retraction. Then there exist constants $M_1>0$ and $M_2>0$ such that the following two inequalities hold for any $x\in \mathcal{M}$ and $v\in T_x{\mathcal{M}}$:
    \begin{align*}
        \left\|R_x(v)-x\right\|&\leq M_1\left\|v\right\|, \\
        \left\|R_x(v)-(x+v)\right\|&\leq M_2\left\|v\right\|^2. 
    \end{align*}
\end{proposition}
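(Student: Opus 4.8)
\emph{Proof sketch.}
The plan is to prove the two estimates by splitting $T\mathcal{M}$ into a local part, where $\|v\|$ is small, and a global part, where $\|v\|$ is bounded away from zero, and then taking the larger of the constants produced in each part. Compactness of $\mathcal{M}$ enters in exactly two ways: first, the diameter $D:=\sup_{x,x'\in\mathcal{M}}\|x-x'\|$ is finite; second, for any fixed $\delta>0$ the set $\mathcal{D}_\delta:=\{(x,v)\in T\mathcal{M}:\|v\|\le\delta\}$ is a closed, hence compact, subset of $\mathcal{M}\times\{w\in\mathbb{E}:\|w\|\le\delta\}$.

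For the local part, fix $\delta>0$ and use the smoothness of $R$ as a map between the manifolds $T\mathcal{M}$ and $\mathcal{M}$. Since each $T_x\mathcal{M}$ is a linear subspace of $\mathbb{E}$, the function $\Psi(x,v):=R_x(v)-x-v$ is a well-defined, smooth, $\mathbb{E}$-valued map on $T\mathcal{M}$. The two defining properties of a retraction translate into $\Psi(x,0)=R_x(0)-x=0$ and $\mathrm{D}_v\Psi(x,0)=\mathrm{D}R_x(0)-\mathrm{Id}_{T_x\mathcal{M}}=0$ for every $x\in\mathcal{M}$; in other words, along each fiber $\Psi$ vanishes to second order at the zero section. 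Applying the second-order Taylor formula with integral remainder to $t\mapsto\Psi(x,tv)$ on $[0,1]$, and noting that the affine term $x+v$ contributes nothing to the fiber Hessian, one gets $\|\Psi(x,v)\|\le\tfrac12\,C_\delta\,\|v\|^2$ for all $(x,v)\in\mathcal{D}_\delta$, where $C_\delta:=\sup\{\,\|\mathrm{D}_v^2 R_x(w)\|:(x,w)\in\mathcal{D}_\delta\,\}<\infty$ by compactness. This yields the second inequality on $\mathcal{D}_\delta$ with constant $\tfrac12 C_\delta$, and, since $\|R_x(v)-x\|\le\|v\|+\|\Psi(x,v)\|\le(1+\tfrac12 C_\delta\delta)\|v\|$, the first inequality on $\mathcal{D}_\delta$ as well.

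For the global part, let $\|v\|>\delta$. Then $\|R_x(v)-x\|\le D\le(D/\delta)\|v\|$, which handles the first inequality; and $\|R_x(v)-(x+v)\|\le\|R_x(v)-x\|+\|v\|\le D+\|v\|\le(D/\delta^2+1/\delta)\|v\|^2$, where the last step uses $1<\|v\|/\delta$ and $1<\|v\|^2/\delta^2$. Choosing $M_1:=\max\{1+\tfrac12 C_\delta\delta,\ D/\delta\}$ and $M_2:=\max\{\tfrac12 C_\delta,\ D/\delta^2+1/\delta\}$ then gives both inequalities for all $x\in\mathcal{M}$ and $v\in T_x\mathcal{M}$.

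The step I expect to be the main obstacle is the uniform quadratic bound in the local part. One has to be careful that the fiber differentiation of $R$ is well-posed even though the domain $T_x\mathcal{M}$ changes with $x$ --- this is cleanest either by working entirely in the fixed ambient space $\mathbb{E}$, as above, or by covering $\mathcal{M}$ with finitely many charts over which $T\mathcal{M}$ trivializes --- and that the Hessian bound $C_\delta$ is genuinely uniform over $\mathcal{D}_\delta$, which is precisely where compactness of $\mathcal{D}_\delta$ is used. Once this uniform Taylor estimate is established, everything else is elementary.
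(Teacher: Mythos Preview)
Your argument is correct. The split into a local Taylor estimate on the compact disk bundle $\mathcal{D}_\delta$ and a crude diameter bound for $\|v\|>\delta$ is exactly the standard route, and your handling of the potential obstacle---working with $\Psi$ as a smooth $\mathbb{E}$-valued map on $T\mathcal{M}\subset\mathbb{E}\times\mathbb{E}$ so that fiber differentiation and the uniform Hessian bound on the compact set $\mathcal{D}_\delta$ are unambiguous---is sound.

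As for comparison: the paper does not give its own proof of this proposition; it simply cites \cite[Appendix~B]{boumalGlobalRatesConvergence2019}. The argument in that reference is essentially the one you outline (Taylor expansion in the fiber variable plus compactness), so there is no substantive difference in approach.
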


\subsection{Generalized subdifferential}\label{sec-general-diff}  
For a function $\varphi:\mathbb{E}\to \mathbb{R}$, the usual one-sided directional derivative of $\varphi$ at $x\in\mathbb{E}$ along the direction $v\in\mathbb{E}$ is defined by
\[\varphi'(x;v):=\lim_{t\downarrow0}\frac{\varphi(x+tv)-\varphi(x)}{t} .\]  In contrast to the usual one-sided directional derivative, the (Clarke) generalized  directional derivative  does not presuppose the existence of  any limit \cite[Section~2.1]{1983Optimization}. Specifically,  the (Clarke) generalized  directional derivative of $\varphi$ at $x\in\mathbb{E}$ along the direction $v\in\mathbb{E}$ is defined as  
\[\varphi^\circ(x;v):=\lim\sup_{\begin{smallmatrix}y\to x\\t\downarrow0\end{smallmatrix}}\frac{\varphi(y+tv)-\varphi(y)}{t}, \]
and the (Clarke) generalized  subdifferential of $\varphi$ at $x$ is defined as
\[\partial \varphi(x):=\{z\in\mathbb{E}\mid \langle z, v\rangle\leq \varphi^\circ(x;v),~\forall v\in\mathbb{E}\}. \] 
If $\varphi:\mathbb{E}\to \mathbb{R}$ is convex, then for any $x\in\mathbb{E}$, $\partial f(x)$ coincides with the subdifferential at $x$ in the sense of convex analysis.

A function $\varphi:\mathbb{E}\to\mathbb{R}$ is said to be regular at $x\in\mathbb{E}$, provided that for all $v\in \mathbb{E}$, the usual one-sided directional derivative $\varphi'(x;v)$ exists and $\varphi'(x;v)=\varphi^\circ(x;v)$  \cite[Definition~2.3.4]{1983Optimization}. If $\varphi_1:\mathbb{E}\to \mathbb{R}$ and $\varphi_2:\mathbb{E}\to \mathbb{R}$ are locally Lipschitz continuous and regular at $x\in\mathbb{E}$, then $\varphi_1+\varphi_2$ are regular at $x$ {\cite[Proposition~2.3.6 (c)]{1983Optimization},} and $\partial (\varphi_1+\varphi_2)(x)=\partial \varphi_1(x)+\partial \varphi_2(x)$ {\cite[Corollary~3 of Proposition~2.3.3]{1983Optimization}.}  If $\varphi:\mathbb{E}\to \mathbb{R}$ is a continuously differentiable function or a real-valued convex function, then it is regualr at any $x\in\mathbb{E}$ {\cite[Proposition~2.3.6 (a) (b)]{1983Optimization}.} A function $\varphi:\mathbb{E}\to\mathbb{R}$ is said to be $\ell$-weakly convex if $\varphi(\cdot)+(\ell/2)\|\cdot\|^2$ is convex. It follows that the real-valued weakly convex function $\varphi$ is regular at any $x\in\mathbb{E}$.

{For a given nonempty closed set $C\subseteq \mathbb{E}$ and $x\in \mathbb{E}$, let $d_{C}(x):=\mathrm{dist}(x,C)$. The tangent cone to $C$ at $x$ is defined by $T_{x}C:=\{v\in\mathbb{E}\mid d^{\circ}_C(x;v)=0\}$, the normal cone is defined by $N_{x}C:=\{z\in\mathbb{E}\mid \langle z,v\rangle\leq0,~\forall~v\in T_{x}C\}$. If $\varphi:\mathbb{E}\to\mathbb{R}$ is locally Lipschitz continuous around $x$ and attains its minimum over $C$ at $x$, then $0\in\partial \varphi(x)+N_C(x)$ \cite[Corollary of Proposition~2.4.3]{1983Optimization}.}

\section{Optimality Conditions}\label{sec:3}
 In this section, we introduce two important concepts of stationarity  for problem \eqref{Prob}, namely, the optimization stationarity and game stationarity, and examine their relationships. To this end, we first define the value function $\Phi:\mathbb{E}_1\to\mathbb{R}$ by  \begin{align}\label{value function}
     \Phi(x):=\max_{y\in S} f(x, y)-g(y).
 \end{align}
Then the minimax problem \eqref{Prob} can be equivalently reformulated into
    \begin{align}\label{Prob2}
         \min_{x\in\mathcal{M}} \Phi(x)+h(x).  
    \end{align}
The next proposition concerns the value function $\Phi$. 
\begin{proposition}\label{pro:value function}
      The value function $\Phi$ defined in \eqref{value function} is $L_x$-weakly convex and for all $x\in\mathbb{E}_1$, it holds that  $\nabla_x f(x, y_x)\in \partial \Phi(x)$ with $y_x\in \mathop{\mathrm{argmax}}_{y\in S} \{f(x, y)-g(y)\}$.  Moreover,  if $f(x, y)-g(y)$ is strongly concave in $y$,  then $\Phi$ is differentiable and  $\nabla \Phi(x)=\nabla_x f(x, y_x)$. 
\end{proposition}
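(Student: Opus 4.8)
The plan is to dispatch the three assertions in turn, in each case reducing to the two quadratic bounds that follow from the $L_x$-Lipschitz continuity of $\nabla_x f$ in Assumption~\ref{Assumption}(ii), together with the subdifferential calculus for regular/weakly convex functions recalled in Section~\ref{sec-general-diff}. First note that by compactness of $S$ (Assumption~\ref{Assumption}(i)) and continuity of $f$ and the finite convex function $g$, the maximum in \eqref{value function} is attained for every $x$, so $\Phi$ is real-valued; being a supremum over a compact set of continuous functions it is moreover locally Lipschitz, which is what makes the Clarke subdifferential well behaved below.

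\emph{Weak convexity.} For each fixed $y\in S$, the $L_x$-Lipschitz continuity of $\nabla_x f(\cdot,y)$ is equivalent to the monotonicity estimate $\langle \nabla_x f(x_1,y)-\nabla_x f(x_2,y),\,x_1-x_2\rangle\ge -L_x\|x_1-x_2\|^2$, i.e.\ to convexity of $f(\cdot,y)+\tfrac{L_x}{2}\|\cdot\|^2$. Hence
\[
\Phi(\cdot)+\frac{L_x}{2}\|\cdot\|^2=\sup_{y\in S}\Big(f(\cdot,y)-g(y)+\frac{L_x}{2}\|\cdot\|^2\Big)
\]
is a pointwise supremum of convex functions and therefore convex, which is precisely the assertion that $\Phi$ is $L_x$-weakly convex.

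\emph{The subgradient inclusion.} Fix $x$ and pick $y_x\in\mathop{\mathrm{argmax}}_{y\in S}\{f(x,y)-g(y)\}$, so $\Phi(x)=f(x,y_x)-g(y_x)$. For any $x'\in\mathbb{E}_1$, using the feasible (but generally suboptimal) choice $y=y_x$ in \eqref{value function} and the lower quadratic bound for $f(\cdot,y_x)$,
\[
\Phi(x')\ \ge\ f(x',y_x)-g(y_x)\ \ge\ \Phi(x)+\langle \nabla_x f(x,y_x),\,x'-x\rangle-\frac{L_x}{2}\|x'-x\|^2 .
\]
Adding $\tfrac{L_x}{2}\|x'\|^2$ to both sides and regrouping shows that $\nabla_x f(x,y_x)+L_x x$ lies in the convex-analysis subdifferential of $\Phi+\tfrac{L_x}{2}\|\cdot\|^2$ at $x$. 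Since $\Phi$ is weakly convex it is regular, and combining the sum rule for Clarke subdifferentials of regular functions with the fact that the Clarke and convex subdifferentials of the convex function $\Phi+\tfrac{L_x}{2}\|\cdot\|^2$ coincide gives $\partial\Phi(x)=\partial\big(\Phi+\tfrac{L_x}{2}\|\cdot\|^2\big)(x)-L_x x$; therefore $\nabla_x f(x,y_x)\in\partial\Phi(x)$.

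\emph{Differentiability under strong concavity.} Assume now $f(x,\cdot)-g(\cdot)$ is strongly concave on $S$, so $y_x$ is the unique maximizer for every $x$. I would first show $x\mapsto y_x$ is continuous: given $x_k\to x$, compactness of $S$ produces a subsequential limit $\bar y$ of $(y_{x_k})$; passing to the limit in $f(x_k,y_{x_k})-g(y_{x_k})\ge f(x_k,y)-g(y)$ for all $y\in S$, using continuity of $f$ and $g$, shows $\bar y$ maximizes $f(x,\cdot)-g(\cdot)$ over $S$, hence $\bar y=y_x$ by uniqueness, and since every subsequential limit equals $y_x$ we get $y_{x_k}\to y_x$. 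Next, for $d\in\mathbb{E}_1$ I would sandwich $\Phi(x+d)$: the suboptimal choice $y=y_x$ gives the lower bound $\Phi(x+d)\ge\Phi(x)+\langle\nabla_x f(x,y_x),d\rangle-\tfrac{L_x}{2}\|d\|^2$ exactly as above, while the upper quadratic bound for $f(\cdot,y_{x+d})$ together with $f(x,y_{x+d})-g(y_{x+d})\le\Phi(x)$ gives $\Phi(x+d)\le\Phi(x)+\langle\nabla_x f(x,y_{x+d}),d\rangle+\tfrac{L_x}{2}\|d\|^2$. Subtracting $\langle\nabla_x f(x,y_x),d\rangle$ and using Cauchy--Schwarz yields
\[
-\frac{L_x}{2}\|d\|^2\ \le\ \Phi(x+d)-\Phi(x)-\langle\nabla_x f(x,y_x),d\rangle\ \le\ \big\|\nabla_x f(x,y_{x+d})-\nabla_x f(x,y_x)\big\|\,\|d\|+\frac{L_x}{2}\|d\|^2 .
\]
As $d\to0$, $y_{x+d}\to y_x$ by the continuity just established, hence $\nabla_x f(x,y_{x+d})\to\nabla_x f(x,y_x)$ because $f$ is continuously differentiable, so both sides are $o(\|d\|)$; thus $\Phi$ is Fréchet differentiable at $x$ with $\nabla\Phi(x)=\nabla_x f(x,y_x)$.

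\emph{Main obstacle.} The weak-convexity and subgradient claims are essentially immediate from the quadratic bounds and the calculus of Section~\ref{sec-general-diff}; the delicate part is the differentiability statement. The key point is that the argmax map must be shown to be (at least) continuous despite the nonsmoothness of $g$ and the constraint set $S$, and that in the sandwich estimate the error term is genuinely $o(\|d\|)$ rather than merely $O(\|d\|)$ — this is exactly where continuity of $x\mapsto y_x$, and not just its boundedness, is indispensable.
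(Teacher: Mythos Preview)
Your argument is correct. The weak-convexity step is identical to the paper's. For the remaining two claims, however, the paper simply invokes Danskin's theorem (compactness of $S$ plus, in the strongly concave case, uniqueness of the maximizer) and stops there, whereas you give self-contained elementary proofs: the quadratic lower bound together with the sum rule for regular functions for the subgradient inclusion, and a sandwich estimate driven by continuity of the argmax map for differentiability. Your route makes explicit the mechanism that Danskin's theorem packages---in particular, your differentiability argument is exactly a proof of the relevant special case of Danskin---so it is longer but more transparent about where each hypothesis (Lipschitz gradient, compactness of $S$, uniqueness from strong concavity) is actually used. Either approach is fine; the paper's is shorter, yours avoids the external citation and would survive in settings where the cited version of Danskin's theorem is not immediately applicable.
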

\begin{proof}
    Because $f(x, y)$ is $L_x$-smooth with respect to $x$,  $f(x, y)+\frac{L_x}{2}\left\|x\right\|^2-g(y)$ is convex in $x$ for any given $y\in S$. Thus,  $\Phi(x)+\frac{L_x}{2}\left\|x\right\|^2=\max_{y\in S} f(x, y)+\frac{L_x}{2}\left\|x\right\|^2-g(y)$ is convex. Therefore,  $\Phi(x)$ is $L_x$-weakly convex. Since $S$ is compact, it follows from  Danskin's theorem \cite[Theorem~D1]{bernhardTheoremDanskinApplication1995} that $\nabla_x f(x, y_x)\in \partial\Phi(x)$, and in particular,  $\nabla \Phi(x)=\nabla_x f(x, y_x)$ when $f(x, y)-g(y)$ is strongly concave in $y$.    
\end{proof}   

{Proposition \ref{pro:value function} shows that $\Phi$ is weakly convex and real-valued, and thus it is locally Lipschitz continuous and regular. Consequently, by \cite[Corollary of Proposition~2.4.3]{1983Optimization}, any local minimizer $x^*$ of problem \eqref{Prob2} must satisfy
\begin{align*}
     0\in \partial (\Phi + h)(x^*) + N_{x^*}\mathcal{M}.
\end{align*}
In view of \cite[Corollary~3 of Proposition~2.3.3]{1983Optimization}, and the regularity of $\Phi$ and $h$, this condition is equivalent to \begin{align}\label{eq:optimization-stationary}
    0\in  \partial \Phi(x^*)+\partial h(x^*)+N_{x^*}\mathcal{M}.
\end{align}}

% In view of Proposition \ref{pro:value function} and   \cite[Proposition~2.3.3, Proposition~2.3.6, Proposition~2.4.3]{1983Optimization}, we see that any local minimizer $x^*$ of problem \eqref{Prob2} must satisfy 

  This naturally leads to the following definition of optimization stationarity, where the $\varepsilon$-optimization-stationarity is partly motivated by \cite{chenProximalGradientMethod2020} and \cite{perez2019subdifferential}.
\begin{definition}[Optimization stationarity]\label{def:optimization-stationary}
Consider problem \eqref{Prob} and the value function $\Phi$ defined in \eqref{value function}.
\begin{enumerate}
    \item[(i)] We say $x^*$ is a  optimization-stationary point of problem \eqref{Prob} if it satisfies \eqref{eq:optimization-stationary}.
    \item[(ii)] Let $\varepsilon>0$ be given. We say that $x^*$ is an $\varepsilon$-optimization-stationary point of problem \eqref{Prob} if there exists $u\in\mathbb{E}$ such that 
{\[\max\{\|u\|,\mathrm{dist}(0,\partial_{\varepsilon}\Phi(x^*+u)+\partial h(x^*+u)+N_{x^*}\mathcal{M})\}<\varepsilon,\]}
where \[\partial_{\varepsilon}\Phi(x):=\{z\in\mathbb{E}\mid \Phi(x)+ \langle z, y-x\rangle-\varepsilon\leq \Phi(y)+\frac{L_x}{2}\|y-x\|^2,~\forall y\in\mathbb{E}\}. \]

\end{enumerate}     
\end{definition}

Now we consider defining the game stationarity of problem \eqref{Prob}. It is routine to prove that any (local) saddle point $(x^*,y^*)$ of problem \eqref{Prob} satisfies 
{\begin{subequations}
    \begin{align}
	&0\in \nabla_x f(x^*,y^*)+\partial h(x^*)+N_{x^*}\mathcal{M}, \label{eq1:game-stationary}\\  &0\in \nabla_y f(x^*, y^*)-\partial g(y^*)-N_{y^*}S, \label{eq2:game-stationary}
   \end{align}  
\end{subequations}}
which yields the following definition of a game-stationary point of problem \eqref{Prob}.
\begin{definition}[Game-stationary point]
    We say that $(x^*, y^*)$ is a game-stationary point of problem \eqref{Prob} if it satisfies \eqref{eq1:game-stationary} and \eqref{eq2:game-stationary}. 
\end{definition}

Next, we study the relationships between optimization-stationary points and game-stationary points of problem \eqref{Prob}.

\begin{proposition}
     If $x^*$ is an optimization-stationary point of the  problem \eqref{Prob}, then there  exists $y^*\in S$ such that $(x^*,y^*)$ is a game-stationary point.  Conversely, if $(x^*, y^*)$ is a game-stationary point of problem \eqref{Prob} ,  then $x^*$ is an optimization-stationary point.
\end{proposition}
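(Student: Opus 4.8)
The plan is to treat the two implications separately, using the value function $\Phi$ and the maximizer set $Y^{\ast}(x^{\ast}):=\mathop{\mathrm{argmax}}_{y\in S}\{f(x^{\ast},y)-g(y)\}$ as the bridge. First I would record the observation that, for a fixed $x^{\ast}$, condition \eqref{eq2:game-stationary} is exactly the first-order optimality condition of the problem $\max_{y\in S}\{f(x^{\ast},y)-g(y)\}$; since $f(x^{\ast},\cdot)-g(\cdot)$ is concave and $S$ is convex, this condition is also sufficient for global optimality, so \eqref{eq2:game-stationary} holds at a point $y^{\ast}$ if and only if $y^{\ast}\in Y^{\ast}(x^{\ast})$. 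The equivalence only uses the smoothness of $f$ and the convexity of $g$.

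The converse implication is then immediate: if $(x^{\ast},y^{\ast})$ is game-stationary, the observation above gives $y^{\ast}\in Y^{\ast}(x^{\ast})$, hence $\nabla_{x}f(x^{\ast},y^{\ast})\in\partial\Phi(x^{\ast})$ by Proposition~\ref{pro:value function}; substituting this into \eqref{eq1:game-stationary} yields $0\in\partial\Phi(x^{\ast})+\partial h(x^{\ast})+N_{x^{\ast}}\mathcal{M}$, which is precisely \eqref{eq:optimization-stationary}.

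For the forward implication I would first sharpen Proposition~\ref{pro:value function} to the identity $\partial\Phi(x^{\ast})=\mathrm{conv}\{\nabla_{x}f(x^{\ast},y):y\in Y^{\ast}(x^{\ast})\}$, which follows from the convex form of Danskin's theorem applied to the convex function $\Phi(\cdot)+\tfrac{L_{x}}{2}\|\cdot\|^{2}$ together with the compactness of $Y^{\ast}(x^{\ast})$ and the continuity of $\nabla_{x}f(x^{\ast},\cdot)$. The heart of the argument is to collapse this convex hull. Fix $y_{1},y_{2}\in Y^{\ast}(x^{\ast})$ and $\lambda\in[0,1]$ and set $y_{\lambda}:=\lambda y_{1}+(1-\lambda)y_{2}$; since the argmax of a concave function over a convex set is convex, $y_{\lambda}\in Y^{\ast}(x^{\ast})$. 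Consider
\[
\psi(x):=f(x,y_{\lambda})-\lambda f(x,y_{1})-(1-\lambda)f(x,y_{2}).
\]
By concavity of $f(x,\cdot)$ we have $\psi\geq 0$ on $\mathcal{M}$, while $\psi(x^{\ast})=g(y_{\lambda})-\lambda g(y_{1})-(1-\lambda)g(y_{2})\leq 0$ because $y_{1},y_{2},y_{\lambda}$ all maximize $f(x^{\ast},\cdot)-g(\cdot)$ and $g$ is convex; hence $\psi(x^{\ast})=0$ and $x^{\ast}$ minimizes the continuously differentiable function $\psi$ over $\mathcal{M}$. Consequently the Riemannian gradient $\mathrm{grad}\,\psi(x^{\ast})=\mathrm{Proj}_{T_{x^{\ast}}\mathcal{M}}\bigl(\nabla\psi(x^{\ast})\bigr)$ vanishes, i.e.\
\[
\nabla_{x}f(x^{\ast},y_{\lambda})-\lambda\nabla_{x}f(x^{\ast},y_{1})-(1-\lambda)\nabla_{x}f(x^{\ast},y_{2})\in N_{x^{\ast}}\mathcal{M}.
\]
Because $N_{x^{\ast}}\mathcal{M}$ is a linear subspace, this gives $\mathrm{conv}\{\nabla_{x}f(x^{\ast},y):y\in Y^{\ast}(x^{\ast})\}+N_{x^{\ast}}\mathcal{M}=\{\nabla_{x}f(x^{\ast},y):y\in Y^{\ast}(x^{\ast})\}+N_{x^{\ast}}\mathcal{M}$.

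To finish, if $x^{\ast}$ is optimization-stationary then combining the two displayed facts,
\[
0\in\partial\Phi(x^{\ast})+\partial h(x^{\ast})+N_{x^{\ast}}\mathcal{M}=\{\nabla_{x}f(x^{\ast},y):y\in Y^{\ast}(x^{\ast})\}+\partial h(x^{\ast})+N_{x^{\ast}}\mathcal{M},
\]
so there exists $y^{\ast}\in Y^{\ast}(x^{\ast})$ with $0\in\nabla_{x}f(x^{\ast},y^{\ast})+\partial h(x^{\ast})+N_{x^{\ast}}\mathcal{M}$, which is \eqref{eq1:game-stationary}; and $y^{\ast}\in Y^{\ast}(x^{\ast})$ yields \eqref{eq2:game-stationary}, so $(x^{\ast},y^{\ast})$ is game-stationary. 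I expect the forward implication to be the main obstacle: an optimization-stationary point only certifies that \emph{some} convex combination of the gradients $\{\nabla_{x}f(x^{\ast},y):y\in Y^{\ast}(x^{\ast})\}$ lies in $-(\partial h(x^{\ast})+N_{x^{\ast}}\mathcal{M})$, and it is the nonnegativity-and-vanishing property of the auxiliary function $\psi$, together with the subspace structure of the normal cone, that upgrades this to a single maximizer $y^{\ast}$. Along the way I would also check the routine points that $Y^{\ast}(x^{\ast})$ is nonempty and compact, that the convex-analysis sum rules used to rewrite \eqref{eq2:game-stationary} are licensed by the regularity of $g$ and the smoothness of $f$, and that the full (convex) form of Danskin's theorem applies in the stated form.
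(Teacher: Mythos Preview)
Your converse implication is identical to the paper's. For the forward implication, however, the paper takes a different route: it observes that $\Phi(\cdot)+h(\cdot)+L_x\|\cdot-x^*\|^2$ is strongly convex, so the optimization-stationarity condition \eqref{eq:optimization-stationary} makes $x^*$ the unique minimizer of this function over the affine tangent space $A_{x^*}\mathcal{M}:=x^*+T_{x^*}\mathcal{M}$ (intersected with a ball). It then applies Sion's minimax theorem to the convex--concave problem
\[
\inf_{x\in A_{x^*}\mathcal{M},\ \|x-x^*\|\leq 1}\ \sup_{y\in S}\ f(x,y)+h(x)-g(y)+L_x\|x-x^*\|^2,
\]
obtaining a saddle point $(x^*,y^*)$ whose first-order saddle conditions are precisely \eqref{eq1:game-stationary} and \eqref{eq2:game-stationary}.

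Your approach is correct and genuinely different. You sharpen Proposition~\ref{pro:value function} to the full Danskin identity $\partial\Phi(x^*)=\mathrm{conv}\{\nabla_x f(x^*,y):y\in Y^*(x^*)\}$ and then collapse the convex hull by the auxiliary-function trick: for $y_1,y_2\in Y^*(x^*)$ and $y_\lambda=\lambda y_1+(1-\lambda)y_2\in Y^*(x^*)$, the function $\psi(x)=f(x,y_\lambda)-\lambda f(x,y_1)-(1-\lambda)f(x,y_2)$ is nonnegative (concavity in $y$) and vanishes at $x^*$ (all three maximizers share the value $\Phi(x^*)$, plus convexity of $g$), so $\nabla\psi(x^*)\in N_{x^*}\mathcal{M}$. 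In fact, since $f$ is defined on all of $\mathbb{E}_1$ and concave in $y$ everywhere, $\psi\geq 0$ holds on $\mathbb{E}_1$, giving the stronger $\nabla\psi(x^*)=0$; hence $\{\nabla_x f(x^*,y):y\in Y^*(x^*)\}$ is itself convex and the normal cone is not even needed. Either version suffices, and the general finite-combination case follows by the same argument with $y_\lambda=\sum_i\lambda_i y_i$. The trade-off: the paper's proof is shorter and avoids the full Danskin formula, relying instead on the off-the-shelf Sion theorem; your argument is more elementary and self-contained, exposing exactly why a single maximizer $y^*$ can always be extracted.
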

\begin{proof}
    Suppose that $x^*$ is an optimization-stationary point of problem \eqref{Prob}. By Proposition \ref{pro:value function}, $\Phi(x)+h(x)+L_x\|x-x^*\|^2$ is strongly convex. {As $0\in N_{x^*}\mathcal{M}+\partial\Phi(x^*)+\partial h(x^*)$, and $N_{x^*}{\mathcal{M}}=N_{0}({T_{x^*}{\mathcal{M}}})$,} then $x^*$ is the unique minimum point of the problem 
    \[\min_{x\in A_{x^*}\mathcal{M}} \Phi(x)+h(x)+L_x\|x-x^*\|^2 \]
    with $A_{x^*}\mathcal{M}:=\{x\mid x=x^*+v,~v\in T_{x^*}{\mathcal{M}}\}$. 
    Therefore, by Sion minimax theorem \cite{sionGeneralMinimaxTheorems1958}, there exists $y^*\in S$ such that 
    \begin{align*}
        &f(x^*,y^*)+h(x^*)-g(y^*)\\=&\inf_{\substack{x\in A_{x^*}{\mathcal{M}},\\ \|x-x^*\|\leq 1}}\sup_{y\in S} f(x,y)+h(x)-g(y)+L_x\|x-x^*\|^2\\
        =&\sup_{y\in S}\inf_{\substack{x\in A_{x^*}{\mathcal{M}},\\ \|x-x^*\|\leq 1}} f(x,y)+h(x)-g(y)+L_x\|x-x^*\|^2,
    \end{align*} which implies  
       {  \begin{align*}
	&0\in \nabla_x f(x^*,y^*)+\partial h(x^*)+N_{x^*}\mathcal{M}, \\  &0\in \nabla_y f(x^*, y^*)-\partial g(y^*)-N_{y^*}S. 
   \end{align*}  }
Conversely, suppose that $(x^*, y^*)$ is a game-stationary point of problem \eqref{Prob}. Then, according to Proposition \ref{pro:value function}, $y^*=\mathop{\mathrm{argmax}}_{y\in S} f(x^*,y)-g(y)$ and $\nabla_x f(x^*,y^*)\in \partial \Phi(x^*)$. Thus, {$0\in\partial \Phi(x^*)+\partial h(x^*)+N_{x^*}\mathcal{M}$. }
\end{proof}   

Next, we introduce the game stationarity measure. For $(x,y)\in\mathbb{E}_1\times\mathbb{E}_2$, the game-stationarity measure is defined as {\begin{align}\label{game-stationary measure}
\mathcal{G}^{\beta}(x,y):=\max\{\|\beta u\|,~\mathrm{dist}(0,\nabla_y f(x,y)-\partial g(y)-N_{y}S )\},
\end{align}}
where $\beta>0$ and $u= \mathop{\mathrm{argmin}}_{v\in T_{x}\mathcal{M}} \langle \nabla_x f(x,y), v\rangle+h(x+v)+\frac{\beta}{2}\|v\|^2$. This definition is partly motivated by the stationarity  measure used in the Euclidean minimax problem \cite{luHybridBlockSuccessive2020}. 
 It is straightforward to verify  that $(x^*,y^*)$ is  a game-stationary point of problem \eqref{Prob} if and only if there exists $\beta>0$ such that
$\mathcal{G}^{\beta} (x^*,y^*)=0$. Using this measure, we can also introduce the notion of $\varepsilon$-game-stationary point.
\begin{definition}\label{measure}
Let $\varepsilon>0$ be given.
    We say $(x^*,y^*)$ is an $\varepsilon$-game-stationary point of problem \eqref{Prob} if there exists $\beta>0$ such that $\mathcal{G}^{\beta} (x^*,y^*) \leq \varepsilon$. 
\end{definition}

Finally, we examine the connections between $\varepsilon$-optimization-stationary points and $\varepsilon$-game-stationary points of problem \eqref{Prob}.

\begin{proposition}\label{epsilon-staionary}
Let $\varepsilon>0$ be given. Suppose that $(x^*, y^*)$ is an $\varepsilon$-game-stationary point of problem \eqref{Prob} with $\mathcal{G}^{\beta}(x^*,y^*)\leq \varepsilon$.  Then, $x^*$ is a $(C\varepsilon)$-optimization-stationary point with \[C=\max\{4\sigma_y^2+1+2L_y\sigma_y/\beta,1/\beta,1+L_x/\beta\}, \]
where $L_x,~L_y,~\sigma_y$ are defined in Assumption \ref{Assumption}.
\end{proposition}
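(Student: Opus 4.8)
The plan is to take as the perturbation in Definition~\ref{def:optimization-stationary}(ii) the vector $u=\mathop{\mathrm{argmin}}_{v\in T_{x^*}\mathcal{M}}\{\langle\nabla_x f(x^*,y^*),v\rangle+h(x^*+v)+\tfrac{\beta}{2}\|v\|^2\}$ appearing in the game-stationarity measure~\eqref{game-stationary measure}. Since $\mathcal{G}^{\beta}(x^*,y^*)\le\varepsilon$ forces $\|\beta u\|\le\varepsilon$, we immediately obtain $\|u\|\le\varepsilon/\beta$. The first step is to record the first-order optimality condition of the (finite, convex, continuous) subproblem defining $u$: since $T_{x^*}\mathcal{M}$ is a linear subspace, its normal cone at $u$ equals $(T_{x^*}\mathcal{M})^{\perp}=N_{x^*}\mathcal{M}$, so the sum rule gives $0\in\nabla_x f(x^*,y^*)+\partial h(x^*+u)+\beta u+N_{x^*}\mathcal{M}$; that is, there exist $\xi\in\partial h(x^*+u)$ and $\eta\in N_{x^*}\mathcal{M}$ with $\nabla_x f(x^*,y^*)+\xi+\beta u+\eta=0$.

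Next I would exhibit an element of $\partial_{C\varepsilon}\Phi(x^*+u)$ that is close to $-\xi-\eta$; the natural candidate is $z:=\nabla_x f(x^*+u,y^*)$, since by the optimality relation above $z+\xi+\eta=\nabla_x f(x^*+u,y^*)-\nabla_x f(x^*,y^*)-\beta u$, whose norm is at most $L_x\|u\|+\|\beta u\|\le(1+L_x/\beta)\varepsilon$ by Assumption~\ref{Assumption}(ii). It then remains to verify $z\in\partial_{C_1\varepsilon}\Phi(x^*+u)$ for a suitable $C_1$. Using $\Phi(y)\ge f(y,y^*)-g(y^*)$ for all $y\in\mathbb{E}_1$ (definition of $\Phi$) together with the descent inequality for the $L_x$-smooth function $f(\cdot,y^*)$, one gets, for every $y\in\mathbb{E}_1$,
\[
\Phi(y)+\tfrac{L_x}{2}\|y-(x^*+u)\|^2\ \ge\ \big(f(x^*+u,y^*)-g(y^*)\big)+\langle z,\,y-(x^*+u)\rangle .
\]
Since $f(x^*+u,y^*)-g(y^*)=\Phi(x^*+u)-P$ with $P:=\Phi(x^*+u)-\big(f(x^*+u,y^*)-g(y^*)\big)\ge0$ the primal gap of the inner maximization at $x^*+u$ relative to $y^*$, this shows exactly $z\in\partial_{P}\Phi(x^*+u)$. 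So the whole problem reduces to bounding $P$ by a constant multiple of $\varepsilon$.

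The primal-gap estimate is the crux of the argument, and the step I expect to be the main obstacle, because the game-stationarity information on the $y$-variable is available only at $x^*$, not at the perturbed point $x^*+u$. I would let $y^{+}\in\mathop{\mathrm{argmax}}_{y\in S}\{f(x^*+u,y)-g(y)\}$ (attained by compactness of $S$), invoke the concavity of $f(x^*+u,\cdot)-g(\cdot)$ to get $P\le\langle\nabla_y f(x^*+u,y^*)-\zeta,\,y^{+}-y^*\rangle$ for any $\zeta\in\partial g(y^*)$, and then use the second term of~\eqref{game-stationary measure}: there exist $\zeta\in\partial g(y^*)$ and $\nu\in N_{y^*}S$ with $\|\nabla_y f(x^*,y^*)-\zeta-\nu\|\le\mathcal{G}^{\beta}(x^*,y^*)\le\varepsilon$. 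Splitting $\nabla_y f(x^*+u,y^*)-\zeta=\big(\nabla_y f(x^*,y^*)-\zeta-\nu\big)+\nu+\big(\nabla_y f(x^*+u,y^*)-\nabla_y f(x^*,y^*)\big)$, the middle inner product with $y^{+}-y^*$ is $\le0$ since $\nu\in N_{y^*}S$ and $y^{+}-y^*\in T_{y^*}S$; the last is $\le L_y\|u\|\,\|y^{+}-y^*\|\le(L_y/\beta)\,\|y^{+}-y^*\|\,\varepsilon$ by Assumption~\ref{Assumption}(iii); and the first is $\le\|y^{+}-y^*\|\,\varepsilon$. Bounding $\|y^{+}-y^*\|\le 2\sigma_y$ (to handle the $L_y$-term) and $\|y^{+}-y^*\|\le\max\{\|y^{+}-y^*\|^2,1\}\le 4\sigma_y^2+1$ (to handle the first term) yields $P\le\big(4\sigma_y^2+1+2L_y\sigma_y/\beta\big)\varepsilon$.

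Finally I would assemble the pieces. With $C_1:=4\sigma_y^2+1+2L_y\sigma_y/\beta$ we have $z\in\partial_{C_1\varepsilon}\Phi(x^*+u)\subseteq\partial_{C\varepsilon}\Phi(x^*+u)$ whenever $C\ge C_1$ (since $\partial_{\delta}\Phi$ is nondecreasing in $\delta$), and $z+\xi+\eta\in\partial_{C\varepsilon}\Phi(x^*+u)+\partial h(x^*+u)+N_{x^*}\mathcal{M}$ with $\|z+\xi+\eta\|\le(1+L_x/\beta)\varepsilon$; together with $\|u\|\le\varepsilon/\beta$ this certifies, via Definition~\ref{def:optimization-stationary}(ii), that $x^*$ is a $(C\varepsilon)$-optimization-stationary point with $C=\max\{4\sigma_y^2+1+2L_y\sigma_y/\beta,\ 1/\beta,\ 1+L_x/\beta\}$. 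The only delicate points beyond the primal-gap bound are secondary: verifying the normal-cone identity $(T_{x^*}\mathcal{M})^{\perp}=N_{x^*}\mathcal{M}$ (already used in Section~\ref{sec:3}), and reading the Lipschitz/smoothness conditions of Assumption~\ref{Assumption}(ii)--(iii) as holding on a convex neighborhood of $\mathcal{M}$ so that they apply at $x^*+u$ and along segments emanating from it — the same reading implicitly used in Proposition~\ref{pro:value function}.
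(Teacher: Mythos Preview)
Your proposal is correct and reaches exactly the stated constant $C=\max\{4\sigma_y^2+1+2L_y\sigma_y/\beta,\ 1/\beta,\ 1+L_x/\beta\}$, but it follows a genuinely different route from the paper's proof. The paper introduces an auxiliary value function
\[
\Phi_{\varepsilon}^{u}(x):=\max_{y\in S}\Big\{f(x+u,y)-g(y)-\tfrac{\varepsilon}{2}\|y-(y^*-z^*)\|^2+\langle \nabla_y f(x^*,y^*)-\nabla_y f(x^*+u,y^*),y\rangle\Big\},
\]
engineered so that $y^*$ is its unique maximizer at $x^*$; Danskin's theorem then gives $\nabla\Phi_{\varepsilon}^{u}(x^*)=\nabla_x f(x^*+u,y^*)$, and the weak convexity of $\Phi_{\varepsilon}^{u}$ together with a comparison $|\Phi-\Phi_{\varepsilon}^{u}|\le \varepsilon(4\sigma_y^2+1)+L_y\sigma_y\|u\|$ yields the claimed $\varepsilon$-subgradient inclusion. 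Your argument bypasses this construction entirely: you use the elementary minorization $\Phi(y)\ge f(y,y^*)-g(y^*)$ and the descent lemma for $f(\cdot,y^*)$ to show $\nabla_x f(x^*+u,y^*)\in\partial_P\Phi(x^*+u)$ with $P$ the primal gap, and then bound $P$ directly via concavity and the normal-cone inequality. This is more transparent and avoids Danskin altogether; the paper's route, on the other hand, packages the $\varepsilon$ in the $y$-component into a quadratic perturbation, which is what naturally produces the $4\sigma_y^2+1$ factor (from $\tfrac12\|y-y^*+z^*\|^2\le 4\sigma_y^2+1$), whereas you recover the same factor through the somewhat ad hoc bound $\|y^+-y^*\|\le\max\{\|y^+-y^*\|^2,1\}\le 4\sigma_y^2+1$. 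Both proofs rely on the same implicit reading of Assumption~\ref{Assumption}(ii)--(iii) on a neighborhood of $\mathcal{M}$, which you correctly flag.
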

\begin{proof}
    Suppose the point $(x^*, y^*)$ is an $\varepsilon$-game-stationary point of problem \eqref{Prob}, and let $u^*:= \arg\min_{v\in T_{x^*}\mathcal{M}} \langle \nabla_x f(x^*,y^*), v\rangle+h(x^*+v)+\frac{\beta}{2}\|v\|^2$. Then, $\|\beta u^*\|\leq\varepsilon$, and there exists $z^*\in\mathbb{E}_2$ such that $\|z^*\|\leq1$ and 
    {\begin{align}\label{optimal-y-prop4}
        \varepsilon z^* \in \nabla_y f(x^*,y^*)-\partial g(y^*)-N_{y^*}S.
    \end{align}  }
    We first prove that 
    \begin{align}\label{sub-epsilon-x+u}
        \nabla_x f(x^*+u^*,y^*)\in\partial_{\hat{\varepsilon}} \Phi(x^*+u^*) \text{ with } \hat{\varepsilon}=\varepsilon(4\sigma_y^2+1)+2L_y\sigma_y\|u^*\|.
    \end{align} 
    Consider the auxiliary function
    \begin{align*}
        \Phi_{\varepsilon}^{u^*}(x):=&\max_{y\in S} f(x+u^*,y)-g(y)-\frac{\varepsilon}{2}\|y-(y^*-z^*)\|^2\\&+\langle \nabla_y f(x^*,y^*)-\nabla_y f(x^*+u^*,y^*),y\rangle.
    \end{align*} 
    By \eqref{optimal-y-prop4}, the point $y^*$ satisfies the first-order optimality condition for the maximization problem defining $\Phi^u_{\varepsilon}(x^*)$.  Hence, applying Danskin's theorem \cite[Theorem~D1]{bernhardTheoremDanskinApplication1995}, we obtain $\nabla \Phi_{\varepsilon}^{u^*}(x^*)=\nabla_x f(x^*+u^*,y^*)$.

   % By \eqref{optimal-y-prop4}, we have
   %  \begin{align}
   %      y^*=&\mathop{\mathrm{argmax}}_{y\in S} f(x^*+u^*,y)-g(y)-\frac{\varepsilon}{2}\|y-(y^*-z)\|^2\\&+\langle \nabla_y f(x^*,y^*)-\nabla_y f(x^*+u^*,y^*),y\rangle,
   %  \end{align}
  Following an argument similar to the proof of Proposition \ref{pro:value function}, we deduce that $\Phi_{\varepsilon}^{u^*}$ is $L_x$-weakly convex. Consequently, for any $x \in \mathbb{E}$, 
  \begin{equation}\label{ineq-phi-u-0}
      \begin{aligned}
        &\Phi_{\varepsilon}^{u^*}(x^*)+\langle \nabla_x f(x^*+u^*,y^*),x-(x^*+u^*)\rangle\\
        \leq &\Phi_{\varepsilon}^{u^*}(x-u^*)+\frac{L_x}{2}\|x-(x^*+u^*)\|^2.
    \end{aligned} 
  \end{equation}
     Recall that $\sigma_y$ is the upper bound of $S$ and $\nabla_y f(x,y)$ is $L_y$-Lipschitz continuous on $\mathcal{M}\times S$. Using $\|z^*\| \leq 1$, we obtain the estimates
     \begin{align*}
         &\frac{\varepsilon}{2}\|y-(y^*-z^*)\|^2\leq \varepsilon(4\sigma_y^2+1),\\
         &\langle \nabla_y f(x^*,y^*)-\nabla_y f(x^*+u^*,y^*),y\rangle\leq L_y\sigma_y\|u^*\|.
     \end{align*}  
   From the definitions of $\Phi$ and $\Phi_{\varepsilon}^{u^*}$, these inequalities yield
\begin{align}\label{ineq-phi-u-1}
    \Phi(x^*+u^*)\leq \Phi_{\varepsilon}^{u^*}(x^*)+\varepsilon(4\sigma_y^2+1)+L_y\sigma_y\|u^*\|  ,
\end{align}
and for any $x\in\mathbb{E}$, 
    \begin{align}\label{ineq-phi-u-2}
        \Phi_{\varepsilon}^{u^*}(x-u^*)\leq \Phi(x) +L_y\sigma_y\|u^*\|.
    \end{align}
    Combining \eqref{ineq-phi-u-0}, \eqref{ineq-phi-u-1} and \eqref{ineq-phi-u-2}, we obtain that for any $x\in\mathbb{E}$,
    \begin{align*}
        &\Phi(x^*+u^*)+\langle \nabla_x f(x^*+u^*,y^*),x-(x^*+u^*)\rangle-\varepsilon(4\sigma_y^2+1)-2L_y\sigma_y\|u^*\|\\ \leq &\Phi(x)+\frac{L_x}{2}\|x-(x^*+u^*)\|^2.
    \end{align*}
   By the definition of the $\varepsilon$-subdifferential $\partial_{\varepsilon}\Phi(\cdot)$, this proves \eqref{sub-epsilon-x+u}. 
   
   On the other hand, from the definition of $u^*$, we have
   {\[-\beta u^*-\nabla_x f(x^*,y^*)\in \partial h(x^*+u^*)+N_{x^*}\mathcal{M}.\]}
   Combining this with \eqref{sub-epsilon-x+u} and recalling that $\nabla_x f(\cdot,y)$ is $L_x$-Lipschitz continuous, we deduce that 
   { \begin{align*}
        \mathrm{dist}(0,\partial_{\hat{\varepsilon}}\Phi(x^*+u^*)+\partial h(x^*+u^*)+N_{x^*}\mathcal{M})<(\beta+L_x)\|u^*\|.
    \end{align*}}
    Since $\|\beta u^*\| \leq \varepsilon$ by the $\varepsilon$-game-stationarity condition, it follows that $\hat{\varepsilon}\leq \varepsilon(4\sigma_y^2+1+2L_y\sigma_y/\beta)$ and $x^*$ is a $(C\varepsilon)$-optimization-stationary point with $C=\max\{4\sigma_y^2+1+2L_y\sigma_y/\beta,1/\beta,1+L_x/\beta\}$. 
\end{proof}

\section{The Proposed MPGDA-PA Algorithm}\label{sec:4}
In this section, we propose the MPGDA-PA method for solving problem \eqref{Prob} and establish its iteration complexity. Our algorithm development is partially motivated by \cite{heApproximationProximalGradient2024,luHybridBlockSuccessive2020,xuRiemannianAlternatingDescent2024}. For the $k$-th iteration, we introduce the following value function:
\begin{align}\label{def-Qk}
Q_k(x) := h(x) +\Phi_k(x)
\end{align}
where $\Phi_k: \mathbb{E}_1 \to \mathbb{R}$ is defined  by
\begin{align}\label{def-Phi}
\Phi_k(x) := \max_{y\in S} \{f(x,y)-g(y)-\frac{\gamma_k}{2}\|y\|^2-\frac{\rho_k}{2}\|y-y_k\|^2\}.
\end{align}
Here, $y_k$ denotes the iterate with respect to $y$ obtained in the previous iteration,
$ \gamma_k> 0$ is the regularization parameter, and $\rho_k > 0$ is the proximal parameter. Intuitively, the  regularization term $\frac{\gamma_k}{2}\|y\|^2$ ensures strong concavity of the optimization problem in \eqref{value function}, while the proximal term $\frac{\rho_k}{2}\|y-y_k\|^2$ stabilizes the update of $y$. Moreover, we choose $\gamma_k=\gamma_0/k^{1/3}$ with some $\gamma_0>0$ and assume the non-increasing sequence $\{\rho_k\}$ to be summable, i.e., \[\{\rho_k\}\in\mathcal{S}:=\{\{\rho_k\}\mid\sum_{k=0}^\infty \rho_k<+\infty,~~\rho_{k}\geq \rho_{k+1}>0, k=0,1,\dots~\}.\]
We further introduce the mapping $\overline{y}_k:\mathbb{E}_1\to \mathbb{E}_2$ defined by 
\begin{align}\label{y-bar}
    \overline{y}_k(x):=\mathop{\mathrm{argmax}}_{y\in S} \{f(x,y)-g(y)-\frac{\gamma_k}{2}\|y\|^2-\frac{\rho_k}{2}\|y-y_k\|^2\}.
\end{align} 
Then it holds that  $\Phi_k(x)=f(x,\overline{y}_k(x))-\frac{\gamma_k}{2}\|\overline{y}_k(x)\|^2-\frac{\rho_k}{2}\|\overline{y}_k(x)-y_k\|^2$ and in view of Danskin's theorem \cite[Theorem~D1]{danskinTheoryMaxMinApplications1966}, $\Phi_k$ is smooth on $\mathbb{E}_1$ and $\nabla \Phi_k(x)=\nabla_x f(x,\overline{y}_k(x))$. 

Now, at the $k$-th iteration, the proposed MPGDA-PA method computes the new iterate by approximately solving the subproblem:
\begin{equation}
\min_{x \in \mathcal{M}} Q_k(x).
\end{equation}
Inspired by the structure of $Q_k$ and the ManPG algorithm \cite[Algorithm~4.1]{chenProximalGradientMethod2020}, we perform Riemannian manifold proximal gradient descent steps for $T_k$ times to generate $x_{k+1}$ with $T_k$ being an integer in $[1, \overline{T}]$ where $\overline{T}$ is a predetermined positive integer. Then $y_{k+1}$ is updated as $\overline{y}_k(x_{k+1})$ with $\overline{y}_k$ defined in \eqref{y-bar}. Note that the update of $y$ can also be interpreted as performing a proximal ascent step for the  function $f(x_{k+1},y)-g(y)-\frac{\gamma_k}{2}\|y\|^2$ on $S$. We formally present the proposed method in Algorithm \ref{MPGDA-PA}.

\begin{algorithm}[!ht]
 	\caption{Manifold Proximal Gradient Descent Ascent (MPGDA-PA) for problem \eqref{Prob}}
     \label{MPGDA-PA}
    \begin{algorithmic}[1] % 控制是否有序号
%     %b取2/kappa
        \REQUIRE   $c_1\in(0,1)$, $\eta\in(0,1)$, $\gamma_{0}>0$, $x_0\in \mathcal{M}$, $y_0\in S $, $0< l_{\min}<l_{\max}$, $\{\rho_k\}\in \mathcal{S}$, $\{\gamma_k=\gamma_0/k^{1/3}: k\in\mathbb{N}\}$, $\{T_k\}\subset [1,\overline{T}]$. % input 的内容
         \FOR{$k=0,1, \dots$}
         \STATE Set $x_{k,0}=x_k$.
         \FOR{$i=0,\dots, T_k-1$}
          \STATE Choose $l_{k,i}\in [l_{\min},l_{\max}]$ and set $\beta_{k,i}=l_{k,i}/(\rho_{k}+\gamma_{k})$.
         \STATE  Compute \begin{align}\label{v ki}
             v_{k,i}:=\mathop{\mathrm{argmin}}_{v\in T_{x_{k,i}}\mathcal{M}} \langle \nabla_x f(x_{k,i}, \overline{y}_k(x_{k,i})), v\rangle+h(x_{k,i}+v)+\frac{\beta_{k,i}}{2}\left\|v\right\|^2,
         \end{align}
         \STATE Find the smallest non-negative integer $j$ such that 
         \begin{align}\label{eq:sufficient descent}
             Q_{k}(R_{x_{k,i}}(\eta^j v_{k,i}))\leq Q_{k}(x_{k,i})-c_1\eta^{j}\beta_{k,i}\|v_{k,i}\|^2+2\rho_k\sigma_y^2
         \end{align}
         and update $x_{k,i+1}=R_{x_{k,i}}(\eta^{j}v_{k,i})$.    
        \ENDFOR   
        \STATE $x_{k+1}=x_{k,T_k},~y_{k+1}=\overline{y}_k(x_{k+1})$.
         \ENDFOR     
    \STATE \textbf{return} $(x_{k+1},y_{k+1})$.
    \end{algorithmic}
\end{algorithm}

Before proceeding, we make some remarks on the iteration procedure of the MPGDA-PA algorithm. First, given $x_{k,i} \in \mathcal{M}$, the MPGDA-PA algorithm computes a descent direction $v_{k,i}$ of $Q_k$ restricted to the tangent space $T_{x_{k,i}}\mathcal{M}$ via tackling the linear constrained strongly convex optimization problem \eqref{v ki}, which can usually be efficiently solved, see \cite[Section~4]{li2024proximal}, for example. Second, for all $x\in\mathcal{M}$,  $\overline{y}_k(x)$ defined in \eqref{y-bar} can be solved by applying FISTA \cite{Beck2017First} to problem \eqref{def-Phi}, which converges linearly due to the strong convexity and smoothness of 
\[\frac{\gamma_k}{2}\|\cdot\|^2+\frac{\rho_k}{2}\|\cdot-y_k\|^2-f(x,\cdot).\]
In particular, if $f(x,y)$ is linear with respect to $y$ for fixed $x$, i.e., $f(x,y) = \langle \mathcal{A}(x), y \rangle$, 
then $\overline{y}_k(x)$ can be further simplified into a proximal problem like \eqref{proximal-g}  as follows:\[
\overline{y}_k(x) = \mathop{\mathrm{argmax}}_{y \in S} \left\{  -g(y) - \frac{\gamma_k + \rho_k}{2} \|y-\frac{\rho_k y_k+\mathcal{A}(x)}{\rho_k+\gamma_k}\|^2 \right\}.
\]
 Third, in practice, we make some choices of parameters $l_{k,i}$ and $\rho_k$, which enhance the efficiency of the MPGDA-PA algorithm in our test and also satisfy $l_{k,i} \in [l_{\min}, l_{\max}]$ and $\rho_k \in S$. Motivated by the Riemannian Barzilai-Borwein (BB) stepsize \cite{iannazzo2018riemannian,2013AWen}, we choose  $l_{k,i}$ as follows:
\[l_{k,i}=\begin{cases}
    \min\{\max\{l_{\min},(\rho_k+\gamma_k)|\frac{\langle \Delta X, \Delta R\rangle}{\|\Delta X\|^2}|)\},l_{\max}\},& if~\Delta X\neq 0,\\
    l_{\max},& if ~\Delta X=0,
\end{cases}\]
where $\Delta X=X_{k,i}-X_{k,i-1}$ and {$\Delta R=\mathrm{grad} \Phi_k(x_{k,i})-\mathrm{grad}\Phi_k(x_{k,i-1})$}.
Moreover, inspired by \cite{xuRiemannianAlternatingDescent2024}, for given constant $\theta>1$, $\tau_1\in (0,1)$ and $\tau_2\in (0,1)$, we set $\rho_0=\xi_0$ and for $k\geq 1$,
\begin{align}
    \rho_k=\frac{\xi_k}{k^\theta}~~\text{with}~~\xi_k=\begin{cases}
        \tau_2 \xi_{k-1},&\text{if}~~ \delta_k\geq \tau_1\delta_{k-1},\\
        \xi_{k-1},& \text{else},
    \end{cases}
\end{align}
where $\delta_k$ is defined as \[\delta_k=\|\gamma_{k-1}y_{k}+\rho_{k-1}(y_{k}-y_{k-1})\|_{\infty}.\]

In what follows, we conduct convergence analysis for the MPGDA-PA algorithm. First, we present a technical lemma that guarantees the boundedness of the mapping $\overline{y}_k$.
\begin{lemma}\label{concave1}
Let the mapping $\overline{y}_k:\mathbb{E}_1\to\mathbb{E}_2$ be defined in \eqref{y-bar} for $k\in\mathbb{N}$. Then for all $x$, $\bar{x}\in\mathbb{E}_1$ and $y\in\mathbb{E}_2$ , it holds that   
    \begin{align*}
       &f(x, y)-g(y)\\&\leq  f(x, \overline{y}_k(\bar{x}))-g(\overline{y}_k(\bar{x}))-\frac{\rho_k}{2}\|\overline{y}_k(\bar{x})-y_k\|^2-\frac{\gamma_k}{2}\|\overline{y}_k(\bar{x})\|^2+\frac{\gamma_k}{2}\|y\|^2\\
       &+\frac{\rho_k}{2}\|y-y_k\|^2+\frac{L_y^2}{2(\rho_k+\gamma_k)}\|x-\bar{x}\|^2.     
   \end{align*}  
\end{lemma}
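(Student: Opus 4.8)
The plan is to reduce the claimed inequality to a Lipschitz-type estimate for the value function $\Phi_k$ together with the strong concavity of the inner maximization. Introduce the shorthand $\Psi_k(x,y):=f(x,y)-g(y)-\tfrac{\gamma_k}{2}\|y\|^2-\tfrac{\rho_k}{2}\|y-y_k\|^2$, so that $\overline{y}_k(x)=\mathop{\mathrm{argmax}}_{y\in S}\Psi_k(x,y)$ and $\Phi_k(x)=\Psi_k(x,\overline{y}_k(x))$. Moving the two $x$-independent terms $\tfrac{\gamma_k}{2}\|y\|^2+\tfrac{\rho_k}{2}\|y-y_k\|^2$ from the right-hand side of the claim to the left, I would first observe that the asserted bound is equivalent to
\[ \Psi_k(x,y)\ \le\ \Psi_k\big(x,\overline{y}_k(\bar x)\big)+\frac{L_y^2}{2(\rho_k+\gamma_k)}\|x-\bar x\|^2 . \]
Because $f(x,\cdot)$ is concave, $g$ is convex, and $\tfrac{\gamma_k}{2}\|\cdot\|^2+\tfrac{\rho_k}{2}\|\cdot-y_k\|^2$ is $(\rho_k+\gamma_k)$-strongly convex, $y\mapsto\Psi_k(x,y)$ is $(\rho_k+\gamma_k)$-strongly concave and $\Psi_k(x,y)\le\Phi_k(x)$ for $y\in S$; so it remains to show $\Phi_k(x)-\Psi_k\big(x,\overline{y}_k(\bar x)\big)\le\tfrac{L_y^2}{2(\rho_k+\gamma_k)}\|x-\bar x\|^2$.

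To prove this last bound I would abbreviate $y':=\overline{y}_k(x)$, $\bar y:=\overline{y}_k(\bar x)$ and split
\[ \Phi_k(x)-\Psi_k(x,\bar y)=\Psi_k(x,y')-\Psi_k(x,\bar y)=\big[\Psi_k(\bar x,y')-\Psi_k(\bar x,\bar y)\big]+\big[\zeta(y')-\zeta(\bar y)\big], \]
where $\zeta(y):=\Psi_k(x,y)-\Psi_k(\bar x,y)=f(x,y)-f(\bar x,y)$ involves only $f$. The first bracket is at most $-\tfrac{\rho_k+\gamma_k}{2}\|y'-\bar y\|^2$ since $\Psi_k(\bar x,\cdot)$ is $(\rho_k+\gamma_k)$-strongly concave with maximizer $\bar y$ over $S$ and $y'\in S$. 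For the second bracket, $\nabla\zeta(y)=\nabla_yf(x,y)-\nabla_yf(\bar x,y)$, and Assumption~\ref{Assumption}(iii) with its two $y$-arguments set equal gives $\|\nabla\zeta(y)\|\le L_y\|x-\bar x\|$, so $\zeta$ is $L_y\|x-\bar x\|$-Lipschitz and $\zeta(y')-\zeta(\bar y)\le L_y\|x-\bar x\|\,\|y'-\bar y\|$. Adding the two estimates and applying Young's inequality $L_y\|x-\bar x\|\,\|y'-\bar y\|\le\tfrac{L_y^2}{2(\rho_k+\gamma_k)}\|x-\bar x\|^2+\tfrac{\rho_k+\gamma_k}{2}\|y'-\bar y\|^2$ cancels the $\|y'-\bar y\|^2$ terms and yields exactly the required bound.

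I expect the computation to be essentially routine once the reduction is in place. The two places that need a little care are: extracting the mixed Lipschitz estimate $\|\nabla_yf(x,y)-\nabla_yf(\bar x,y)\|\le L_y\|x-\bar x\|$ directly from Assumption~\ref{Assumption}(iii) (it follows at once on equating the two $y$-arguments, and notably requires no smoothness of $\nabla_xf$ in $y$), and choosing the constant $\rho_k+\gamma_k$ in Young's inequality so that the resulting $\|y'-\bar y\|^2$ term is absorbed exactly by the term produced by strong concavity, leaving the clean coefficient $L_y^2/(2(\rho_k+\gamma_k))$ on $\|x-\bar x\|^2$.
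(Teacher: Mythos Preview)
Your argument is correct, but it follows a somewhat different route from the paper's own proof. The paper never introduces the auxiliary maximizer $y'=\overline y_k(x)$; instead it works directly with the first-order optimality condition at $\bar y:=\overline{y}_k(\bar x)$: from $0\in\nabla_y f(\bar x,\bar y)-\partial g(\bar y)-N_{\bar y}S-\rho_k(\bar y-y_k)-\gamma_k\bar y$ together with the concavity of $f(x,\cdot)-g(\cdot)$ it derives
\[
f(x,y)-g(y)\le f(x,\bar y)-g(\bar y)+\langle\nabla_y f(x,\bar y)-\nabla_y f(\bar x,\bar y),\,y-\bar y\rangle+\langle\rho_k(\bar y-y_k)+\gamma_k\bar y,\,y-\bar y\rangle,
\]
and then expands the second inner product via the three-point identity $2\langle a,a-b\rangle=\|a\|^2+\|a-b\|^2-\|b\|^2$, producing the terms $-\tfrac{\rho_k}{2}\|\bar y-y_k\|^2-\tfrac{\gamma_k}{2}\|\bar y\|^2+\tfrac{\rho_k}{2}\|y-y_k\|^2+\tfrac{\gamma_k}{2}\|y\|^2$ together with a residual $-\tfrac{\rho_k+\gamma_k}{2}\|y-\bar y\|^2$; the latter then cancels against the Young-inequality bound on the first inner product, exactly as in your last step. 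So the paper's $-\tfrac{\rho_k+\gamma_k}{2}\|\cdot\|^2$ term comes from the three-point identity applied to the \emph{given} $y$, whereas yours comes from strong concavity evaluated at the \emph{extra} point $y'=\overline y_k(x)$. Your version is a bit more conceptual (it isolates a Lipschitz-in-$x$ estimate for $\Phi_k$), while the paper's is slightly more direct. Both rely on the same two ingredients---the mixed Lipschitz estimate for $\nabla_y f$ from Assumption~\ref{Assumption}(iii) and Young's inequality with parameter $\rho_k+\gamma_k$---and both tacitly need $y\in S$ (for the normal-cone inequality in the paper's argument, and for $\Psi_k(x,y)\le\Phi_k(x)$ in yours), even though the lemma is stated for $y\in\mathbb{E}_2$.
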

\begin{proof}
     By the definition of $\overline{y}_k(\bar{x})$, we have {\[0\in \nabla_y f(\bar{x}, \overline{y}_k(\bar{x}))- \partial g(\overline{y}_k(\bar{x}))-N_{\overline{y}_k(\bar{x})}S-\rho_k(\overline{y}_k(\bar{x})-y_k)-\gamma_k \overline{y}_k(\bar{x}). \]}
     This and the concavity of $f(x,\cdot)-g(\cdot)$ yield that  \begin{align*}
            &f(x, y)-g(y)\\
        &\leq f(x, \overline{y}_k(\bar{x}))-g(\overline{y}_k(\bar{x}))+ \langle\nabla_y f(x, \overline{y}_k(\bar{x}))-\nabla_y f(\bar{x},\overline{y}_k(\bar{x})), y-\overline{y}_k(\bar{x})\rangle\\
        &+\langle \rho_k(\overline{y}_k(\bar{x})-y_k)+\gamma_k \overline{y}_k(\bar{x}), y-\overline{y}_k(\bar{x})\rangle. 
        \end{align*}   
     Since $2\langle a, a-b\rangle=\left\| a\right\|^2+\left\|a-b\right\|^2-\left\|b\right\|^2$ and $\nabla_y f(x,y)$ is $L_y$-Lipschitz continuous,  it follows that, 
     \begin{align*}
       &f(x, y)-g(y)\\&\leq  f(x, \overline{y}_k(\bar{x}))-g(\overline{y}_k(\bar{x}))-\frac{\rho_k}{2}\|\overline{y}_k(\bar{x})-y_k\|^2-\frac{\gamma_k}{2}\|\overline{y}_k(\bar{x})\|^2+\frac{\gamma_k}{2}\|y\|^2\\
       &+\frac{\rho_k}{2}\|y-y_k\|^2+\frac{L_y^2}{2(\rho_k+\gamma_k)}\|x-\bar{x}\|^2.     
   \end{align*}
   
\end{proof}

Next, we prove that the descent condition \eqref{eq:sufficient descent} must be satisfied after a unified finite number of iterations for all $k\in\mathbb{N}$ and $i\in[0,T_k-1]$, which ensures that the proposed Algorithm \ref{MPGDA-PA} is well-defined.
\begin{proposition}\label{prop:sufficient descent}
     Consider Algorithm \ref{MPGDA-PA}. There exists a constant $\bar{\alpha}>0$ such that for all  $k\in\mathbb{N}$ and $0<\alpha\leq \min\{1,~\bar{\alpha}\}$, it holds that \[Q_{k}(R_{x_{k,i}}( \alpha v_{k,i}))\leq Q_{k}(x_{k,i})-\frac{\alpha}{4}\beta_{k,i}\|v_{k,i}\|^2+2\rho_k\sigma_y^2\]
     for all $i=0,1,\dots,T_k-1$.        
\end{proposition}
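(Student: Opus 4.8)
I would prove the slightly stronger estimate
\[
Q_{k}\bigl(R_{x_{k,i}}(\alpha v_{k,i})\bigr)\le Q_{k}(x_{k,i})-\tfrac{\alpha}{4}\beta_{k,i}\|v_{k,i}\|^{2}\qquad\text{for all }0<\alpha\le\min\{1,\bar{\alpha}\},
\]
from which the asserted inequality follows at once since $2\rho_{k}\sigma_{y}^{2}\ge 0$ (the slack is only retained to match the backtracking rule \eqref{eq:sufficient descent}). Throughout I fix $k$ and $i$ and abbreviate $\bar{x}:=x_{k,i}$, $v:=v_{k,i}$, $\beta:=\beta_{k,i}$, $y^{*}:=\overline{y}_{k}(\bar{x})$ and $z_{\alpha}:=R_{\bar{x}}(\alpha v)$ for $\alpha\in(0,1]$. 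First I would note that, by compactness of $\mathcal{M}\times S$ (Assumption~\ref{Assumption}(i)) and continuity of $\nabla_{x}f$, the constant $G:=\sup\{\|\nabla_{x}f(x,y)\|:x\in\mathcal{M},\ y\in S\}$ is finite, so in particular $\|\nabla\Phi_{k}(\bar{x})\|=\|\nabla_{x}f(\bar{x},y^{*})\|\le G$.

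\textbf{Building an upper quadratic model for $\Phi_{k}$ at $\bar{x}$.} This is the crux. Since $\Phi_{k}$ is a maximum over $y$, it has no gradient-Lipschitz constant uniform in $k$, so instead I would replace it near $\bar{x}$ by the ``frozen'' surrogate $\widehat{\Phi}_{k}(x):=f(x,y^{*})-g(y^{*})-\tfrac{\gamma_{k}}{2}\|y^{*}\|^{2}-\tfrac{\rho_{k}}{2}\|y^{*}-y_{k}\|^{2}$, which by \eqref{def-Phi}--\eqref{y-bar} satisfies $\widehat{\Phi}_{k}(\bar{x})=\Phi_{k}(\bar{x})$ and $\nabla\widehat{\Phi}_{k}(\bar{x})=\nabla_{x}f(\bar{x},y^{*})=\nabla\Phi_{k}(\bar{x})$, and has $L_{x}$-Lipschitz gradient by Assumption~\ref{Assumption}(ii). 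Applying Lemma~\ref{concave1} with $x=z_{\alpha}$, with the point at which $\overline{y}_{k}$ is evaluated taken to be $\bar{x}$, and with the free variable set to $y=\overline{y}_{k}(z_{\alpha})$, and cancelling the matching $\overline{y}_{k}(z_{\alpha})$-terms, I obtain
\[
\Phi_{k}(z_{\alpha})\le\widehat{\Phi}_{k}(z_{\alpha})+\tfrac{L_{y}^{2}}{2(\rho_{k}+\gamma_{k})}\|z_{\alpha}-\bar{x}\|^{2}.
\]
Then the descent lemma for $\widehat{\Phi}_{k}$, followed by the retraction estimates of Proposition~\ref{retr} ($\|z_{\alpha}-\bar{x}\|\le M_{1}\alpha\|v\|$ and $\|z_{\alpha}-\bar{x}-\alpha v\|\le M_{2}\alpha^{2}\|v\|^{2}$) together with $\|\nabla\Phi_{k}(\bar{x})\|\le G$, yield
\[
\Phi_{k}(z_{\alpha})\le\Phi_{k}(\bar{x})+\alpha\langle\nabla\Phi_{k}(\bar{x}),v\rangle+\Bigl(GM_{2}+\tfrac{M_{1}^{2}}{2}\bigl(L_{x}+\tfrac{L_{y}^{2}}{\rho_{k}+\gamma_{k}}\bigr)\Bigr)\alpha^{2}\|v\|^{2}.
\]

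\textbf{The nonsmooth term and the descent inequality.} For the convex, $L_{h}$-Lipschitz function $h$ (Assumption~\ref{Assumption}(iv)) I would use convexity together with Proposition~\ref{retr} and $\bar{x}+\alpha v=(1-\alpha)\bar{x}+\alpha(\bar{x}+v)$ to get $h(z_{\alpha})\le(1-\alpha)h(\bar{x})+\alpha h(\bar{x}+v)+L_{h}M_{2}\alpha^{2}\|v\|^{2}$. Adding the two estimates gives
\[
Q_{k}(z_{\alpha})\le Q_{k}(\bar{x})+\alpha\bigl(\langle\nabla\Phi_{k}(\bar{x}),v\rangle+h(\bar{x}+v)-h(\bar{x})\bigr)+C_{k}\,\alpha^{2}\|v\|^{2},
\]
where $C_{k}:=GM_{2}+L_{h}M_{2}+\tfrac{M_{1}^{2}}{2}\bigl(L_{x}+\tfrac{L_{y}^{2}}{\rho_{k}+\gamma_{k}}\bigr)$. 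Comparing the minimizer $v$ of the strongly convex subproblem \eqref{v ki} with the feasible point $0$ and using $\nabla_{x}f(\bar{x},\overline{y}_{k}(\bar{x}))=\nabla\Phi_{k}(\bar{x})$ gives $\langle\nabla\Phi_{k}(\bar{x}),v\rangle+h(\bar{x}+v)-h(\bar{x})\le-\tfrac{\beta}{2}\|v\|^{2}$, hence $Q_{k}(z_{\alpha})\le Q_{k}(\bar{x})-\alpha\bigl(\tfrac{\beta}{2}-C_{k}\alpha\bigr)\|v\|^{2}$. Finally, since $\{\rho_{k}\}$ is nonincreasing and $\gamma_{k}\le\gamma_{0}$, we have $\rho_{k}+\gamma_{k}\le\rho_{0}+\gamma_{0}$, so $C_{k}=C_{1}+C_{2}/(\rho_{k}+\gamma_{k})$ for constants $C_{1},C_{2}$ depending only on the data, while $\beta_{k,i}\ge l_{\min}/(\rho_{k}+\gamma_{k})$; therefore
\[
\frac{\beta_{k,i}}{4C_{k}}=\frac{l_{k,i}}{4\bigl(C_{1}(\rho_{k}+\gamma_{k})+C_{2}\bigr)}\ge\frac{l_{\min}}{4\bigl(C_{1}(\rho_{0}+\gamma_{0})+C_{2}\bigr)}=:\bar{\alpha}>0,
\]
which is independent of $k$ and $i$; for $0<\alpha\le\min\{1,\bar{\alpha}\}$ this gives $C_{k}\alpha\le\beta/4$ and hence the claimed bound.

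\textbf{Main obstacle.} The delicate point is the upper quadratic model: a naive descent lemma applied directly to $\Phi_{k}$ would introduce a constant of order $1/(\rho_{k}+\gamma_{k})\to\infty$ that cannot be absorbed. The key is that, after freezing $y^{*}$ and invoking Lemma~\ref{concave1}, the model constant $C_{k}$ scales \emph{exactly} like $\beta_{k,i}=l_{k,i}/(\rho_{k}+\gamma_{k})$, so the Armijo threshold $\beta_{k,i}/(4C_{k})$ stays bounded away from zero uniformly in $k$ and $i$; making this matching come out cleanly---in particular checking that the cancellation of the $\overline{y}_{k}(z_{\alpha})$-terms in Lemma~\ref{concave1} leaves precisely the $L_{x}$-smooth surrogate plus the $L_{y}^{2}/(2(\rho_{k}+\gamma_{k}))$ remainder---is the heart of the argument. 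The remaining pieces, namely the retraction inequalities, the convexity bound for $h$, and the first-order optimality of $v_{k,i}$, are routine.
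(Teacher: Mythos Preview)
Your proof is correct and follows essentially the same route as the paper's: both freeze the maximizer $y^{*}=\overline{y}_{k}(x_{k,i})$, invoke Lemma~\ref{concave1} to control the error $\Phi_{k}(z_{\alpha})-\widehat{\Phi}_{k}(z_{\alpha})$ by $\tfrac{L_{y}^{2}}{2(\rho_{k}+\gamma_{k})}\|z_{\alpha}-\bar{x}\|^{2}$, combine this with the standard retraction/descent estimates (the paper packages these via \cite[Lemma~5.2]{chenProximalGradientMethod2020}), and then observe that the resulting model constant scales exactly like $\beta_{k,i}=l_{k,i}/(\rho_{k}+\gamma_{k})$ so that the Armijo threshold is uniform in $k,i$. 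Your version is slightly sharper in that you obtain the inequality without the $2\rho_{k}\sigma_{y}^{2}$ slack---indeed, in the paper's display \eqref{ineq-F-gammak} the term $\tfrac{\rho_{k}}{2}\|\overline{y}_{k}(x_{\alpha})-y_{k}\|^{2}$ cancels once one passes to $Q_{k}$, so the extra $2\rho_{k}\sigma_{y}^{2}$ there is harmless slack retained only to match the line-search rule~\eqref{eq:sufficient descent}.
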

   
\begin{proof}
Let $G:=\sup\{ \|\nabla_x f(x,y)\|\mid x\in\mathcal{M},y\in S\}<\infty.$ Assume that $0<\alpha\leq\min\{1,~\frac{\beta_{k,i}}{2M_2(L_h+G)+M_1^2L_x}\}$ with $M_1>0,~M_2>0$ being defined in Proposition \ref{retr} and $x_{\alpha}=R_{x_{k,i}}(\alpha v_{k,i})$. Then, following an argument similar to the proof of \cite[Lemma~5.2]{chenProximalGradientMethod2020}, we deduce that 
\[f(x_{\alpha}, \overline{y}_k(x_{k,i}))+h(x_{\alpha})\leq f(x_{k,i}, \overline{y}_k(x_{k,i}))+h(x_{k,i})-\frac{\alpha \beta_{k,i}}{2}\|v_{k,i}\|^2.\]  

 According to Lemma \ref{concave1}, we have 
\begin{align*}
       &f(x_{\alpha}, \overline{y}_k(x_{\alpha}))-g(\overline{y}_k(x_{\alpha}))\\&\leq  f(x_{\alpha}, \overline{y}_k(x_{k,i}))-g(\overline{y}_k(x_{k,i}))-\frac{\rho_k}{2}\|\overline{y}_k(x_{k,i})-y_k\|^2-\frac{\gamma_{k}}{2}\|\overline{y}_k(x_{k,i})\|^2\\
       &+\frac{\gamma_{k}}{2}\|\overline{y}_k(x_{\alpha})\|^2+\frac{\rho_k}{2}\|\overline{y}_k(x_{\alpha})-y_k\|^2+\frac{L_y^2}{2(\rho_k+\gamma_{k})}\|x_{\alpha}-x_{k,i}\|^2.     
   \end{align*}
   By summing the above two inequalities and the inequality \[\|x_{\alpha}-x_{k,i}\| \leq \alpha M_1\|v_{k,i}\|\] from Proposition \ref{retr},
     we derive the following inequality:
     \begin{equation}
         \begin{aligned} \label{ineq-F-gammak}      &F(x_{\alpha},\overline{y}_k(x_{\alpha}))-\frac{\gamma_{k}}{2}\|\overline{y}_k(x_{\alpha})\|^2\\&\leq  F(x_{k,i},\overline{y}_k(x_{k,i}))-\frac{\rho_k}{2}\|\overline{y}_k(x_{k,i})-y_k\|^2-\frac{\gamma_{k}}{2}\|\overline{y}_k(x_{k,i})\|^2\\&+\frac{\rho_k}{2}\|\overline{y}_k(x_{\alpha})-y_k\|^2-(\frac{\alpha\beta_{k,i}}{2}-\frac{\alpha^2M_1^2L_y^2}{2(\rho_k+\gamma_{k})})\|v_{k,i}\|^2.  
   \end{aligned}
     \end{equation}
 Since $\beta_{k,i}=l_{k,i}/(\rho_k+\gamma_{k})$, for any $0<\alpha\leq \frac{l_{k,i}}{2M_1^2L_y^2}$,  it holds that 
   \begin{align}\label{ineq-beta-v}
       (\frac{\alpha\beta_{k,i}}{2}-\frac{\alpha^2M_1^2L_y^2}{2(\rho_k+\gamma_k)})\|v_{k,i}\|^2&=\beta_{k,i}(\frac{\alpha}{2}-\frac{\alpha^2M_1^2L_y^2}{2l_{k,i}})\| v_{k,i}\|^2\geq \frac{\alpha}{4}\beta_{k,i}\| v_{k,i}\|^2.    
   \end{align}
Combining \eqref{ineq-F-gammak}, \eqref{ineq-beta-v}, and the definition of $Q_k(x)$, we obtain that for all $k\geq 0$ and any $0<\alpha\leq \min\{1,\bar{\alpha}\}$,
\[Q_{k}(x_{\alpha})\leq Q_{k}(x_{k,i})-\frac{\alpha}{4}\beta_{k,i}\|v_{k,i}\|^2+2\rho_k\sigma_y^2,\]
 where  $\bar{\alpha}=\min\{\frac{l_{\min}}{(\rho_0+\gamma_0)(2M_2(L_h+G)+M_1^2L_x)},\frac{l_{\min}}{2M_1^2L_y^2}\}$.  
\end{proof}
By Proposition \ref{prop:sufficient descent}, the backtracking line search procedure in Algorithm \ref{MPGDA-PA} will terminate after a finite number of steps. We present this result in the following proposition.
\begin{proposition}\label{prop:line-search}
    For all $k\geq 0$, the condition \eqref{eq:sufficient descent} in Algorithm \ref{MPGDA-PA} is satisfied within at most $J_1$ backtracking steps, where
    \[ J_1:=\max\left\{\left\lceil \log_{\eta} \frac{\min\{1,\bar{\alpha}\}}{4c_1}\right\rceil,0\right\} \text{ with } \bar{\alpha}\text{ being defined in Proposition \ref{prop:sufficient descent}.}\]
\end{proposition}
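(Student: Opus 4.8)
My plan is to treat this as a standard ``the Armijo line search terminates'' argument: exhibit an explicit trial index at which the backtracking condition \eqref{eq:sufficient descent} is guaranteed to hold, and then observe that the loop — which accepts the \emph{smallest} admissible index — cannot run longer than that. The certificate is Proposition \ref{prop:sufficient descent}: it already supplies a threshold $\min\{1,\bar\alpha\}$, uniform over $k$ and over $i\in\{0,\dots,T_k-1\}$, below which the ``quarter''-sufficient-decrease
\[Q_k(R_{x_{k,i}}(\alpha v_{k,i}))\le Q_k(x_{k,i})-\tfrac{\alpha}{4}\beta_{k,i}\|v_{k,i}\|^2+2\rho_k\sigma_y^2\]
holds for every $\alpha\in(0,\min\{1,\bar\alpha\}]$. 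Since \eqref{eq:sufficient descent} only demands the weaker decrease with the constant $c_1\in(0,1)$ in place of $\tfrac14$, it must be satisfied as soon as the trial step $\eta^j$ is small enough that the guaranteed decrease dominates the one required at that same step; hence the backtracking necessarily succeeds by the first such $j$.

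Concretely I would (i) invoke Proposition \ref{prop:sufficient descent} with $\alpha=\eta^j$, legitimate once $\eta^j\le\min\{1,\bar\alpha\}$, and — more finely, by using the chain of inequalities in its proof, which produces the coefficient $\tfrac{\alpha\beta_{k,i}}{2}-\tfrac{\alpha^2M_1^2L_y^2}{2(\rho_k+\gamma_k)}$ before it is relaxed to $\tfrac{\alpha}{4}\beta_{k,i}$ — recast the admissibility requirement for \eqref{eq:sufficient descent} as ``this coefficient is $\ge c_1\alpha\beta_{k,i}$''; then (ii) use $\beta_{k,i}=l_{k,i}/(\rho_k+\gamma_k)$ together with the uniform bounds $l_{k,i}\in[l_{\min},l_{\max}]$ and $\rho_k+\gamma_k\le\rho_0+\gamma_0$ to turn this into a single, $k$- and $i$-independent bound on $\eta^j$, which I expect to reduce to $\eta^j\le\min\{1,\bar\alpha\}/(4c_1)$, the factor $1/(4c_1)$ recording exactly the slack between the $\tfrac14$ of Proposition \ref{prop:sufficient descent} and the looser $c_1$ here; and finally (iii), since $\eta\in(0,1)$, note that the integers $j$ with $\eta^j\le\min\{1,\bar\alpha\}/(4c_1)$ are precisely those with $j\ge\log_\eta(\min\{1,\bar\alpha\}/(4c_1))$, so the first admissible index — the counter starting at $0$ — is $\max\{\lceil\log_\eta(\min\{1,\bar\alpha\}/(4c_1))\rceil,0\}=J_1$, which bounds the number of backtracking steps.

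The one genuinely delicate point is step (ii): one must feed Proposition \ref{prop:sufficient descent} the step size that makes its conclusion land \emph{exactly} at the point $R_{x_{k,i}}(\eta^j v_{k,i})$ occurring in \eqref{eq:sufficient descent}, and then track the constants closely enough to reproduce the precise threshold $\min\{1,\bar\alpha\}/(4c_1)$ rather than a cruder surrogate. Everything else is routine bookkeeping: the $+2\rho_k\sigma_y^2$ term is common to both sides of the comparison and drops out, the uniformity in $k$ and $i$ is inherited verbatim from Proposition \ref{prop:sufficient descent}, and the truncation $\max\{\cdot,0\}$ simply covers the case in which the very first trial $j=0$ (step size $1$) is already accepted.
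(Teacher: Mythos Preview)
Your proposal is correct and coincides with the paper's argument: the paper offers no proof beyond the sentence ``By Proposition~\ref{prop:sufficient descent}, the backtracking line search procedure in Algorithm~\ref{MPGDA-PA} will terminate after a finite number of steps,'' and your plan is exactly the standard Armijo-termination bookkeeping that makes this precise. Your flagging of step~(ii) as the only nontrivial point --- tracking the constants from the intermediate inequality in the proof of Proposition~\ref{prop:sufficient descent} rather than its final $\tfrac14$-statement --- is apt, since comparing $\tfrac{\eta^j}{4}$ with $c_1\eta^j$ alone does not produce the factor $4c_1$ appearing in $J_1$.
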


We are now ready to establish the iteration complexity for Algorithm \ref{MPGDA-PA}. Let $\varepsilon>0$ be a given target accuracy, we define \begin{align}\label{complex-measure}
    T(\varepsilon):=\min\{k\mid \mathcal{G}^{\beta_{k}}(x_{k},y_k)<\varepsilon\}
\end{align}
where $\mathcal{G}^{\beta}(\cdot,\cdot)$ is defined in \eqref{game-stationary measure}. In other words, $T(\varepsilon)$ stands for the minimal number of outer iterations needed for Algorithm \ref{MPGDA-PA} to obtain a $\varepsilon$-game-stationary point.  We further introduce two constants $\overline{Q}$ and $\underline{Q}$ as follows
\begin{align*}
    \overline{Q}&:=\sup\{{F(x,y)\mid x\in\mathcal{M},y\in S}\},\\ \underline{Q}&:=\inf\{F(x,y)-\frac{\rho_0}{2}\|y-y_0\|^2-\frac{\gamma_0}{2}\|y\|^2\mid x\in\mathcal{M},y\in S\}.
\end{align*}

\begin{theorem}\label{the:converge}
     Given $\varepsilon>0$, let $\{(x_{k},y_{k})\}$ be the sequence generated by Algorithm \ref{MPGDA-PA}.  Then, the following inequality holds:
     \[T(\varepsilon)\leq \frac{(\max\{(2\rho_0+\gamma_0)\sigma_y ,C\})^3}{\varepsilon^{3}}+1\]
     with
     \[C=\frac{\sqrt{2(\overline{Q}-\underline{Q}+\frac{\gamma_0}{2}\sigma_y^2+2(\overline{T}+1)S_{\rho}\sigma_y^2)l_{\max}}}{\sqrt{3c_1\eta^{J_1}\gamma_0}}.\]
Here, the constants $c_1,~\eta,~\rho_0,~\gamma_0,~\overline{T}$, and $l_{\max}$ are parameters of Algorithm \ref{MPGDA-PA}, $J_1$ is defined in Proposition \ref{prop:line-search}, and $S_{\rho}:=\sum_{k=0}^{\infty}\rho_k<\infty$.
     
  \end{theorem}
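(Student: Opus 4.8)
The plan is to combine the sufficient-descent inequality \eqref{eq:sufficient descent} over all inner iterations and all outer iterations $k = 0, \dots, K-1$ to telescope the value functions $Q_k$, and then to lower-bound the accumulated sum of $\beta_{k,i}\|v_{k,i}\|^2$ in terms of the game-stationarity measure $\mathcal{G}^{\beta_k}(x_k, y_k)$. First I would observe that, by the line-search termination guaranteed in Proposition \ref{prop:line-search}, the accepted step size $\eta^j$ is at least $\eta^{J_1}\min\{1,\bar\alpha\}/(4c_1) \geq \eta^{J_1}$ (after possibly absorbing constants), so each inner step yields $Q_k(x_{k,i+1}) \leq Q_k(x_{k,i}) - c_1 \eta^{J_1}\beta_{k,i}\|v_{k,i}\|^2 + 2\rho_k\sigma_y^2$. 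Summing over $i = 0, \dots, T_k - 1$ gives a bound on $Q_k(x_{k+1}) - Q_k(x_k)$ with an added error $2T_k\rho_k\sigma_y^2 \leq 2\overline{T}\rho_k\sigma_y^2$. The key bridge between consecutive outer iterations is the relation between $Q_k(x_{k+1})$ and $Q_{k+1}(x_{k+1})$: since $Q_k$ and $Q_{k+1}$ differ only through the proximal/regularization terms $\tfrac{\gamma_k}{2}\|y\|^2 + \tfrac{\rho_k}{2}\|y - y_k\|^2$ versus $\tfrac{\gamma_{k+1}}{2}\|y\|^2 + \tfrac{\rho_{k+1}}{2}\|y - y_{k+1}\|^2$, and $y_{k+1} = \overline{y}_k(x_{k+1})$, one can show (using $\gamma_{k+1}\leq\gamma_k$, $\rho_{k+1}\leq\rho_k$, compactness of $S$, and the optimality of $y_{k+1}$) that $Q_{k+1}(x_{k+1}) \leq Q_k(x_{k+1}) + (\text{small terms controlled by } \gamma_k - \gamma_{k+1}, \rho_k\sigma_y^2)$. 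Here the fact that $\{\rho_k\}$ is summable makes all $\rho_k$-error terms telescope into a finite constant $S_\rho$, while the $\gamma_k$-terms telescope because $\{\gamma_k\}$ is decreasing and bounded.

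Second, I would telescope: summing $Q_{k+1}(x_{k+1}) - Q_k(x_k) \leq -c_1\eta^{J_1}\sum_i \beta_{k,i}\|v_{k,i}\|^2 + O(\rho_k\sigma_y^2) + O((\gamma_k-\gamma_{k+1})\sigma_y^2)$ over $k = 0$ to $K-1$, and using $\underline Q \leq Q_k(x_k) \leq \overline{Q} + \tfrac{\gamma_0}{2}\sigma_y^2$ (or the appropriate uniform bounds baked into $\overline Q, \underline Q$), one gets
\[
c_1\eta^{J_1}\sum_{k=0}^{K-1}\sum_{i=0}^{T_k-1}\beta_{k,i}\|v_{k,i}\|^2 \;\leq\; \overline Q - \underline Q + \tfrac{\gamma_0}{2}\sigma_y^2 + 2(\overline{T}+1)S_\rho\sigma_y^2.
\]
Since $\beta_{k,i} = l_{k,i}/(\rho_k + \gamma_k) \geq l_{\min}/(\rho_0+\gamma_0)$ is not small enough by itself — we actually want a lower bound that decays like $\gamma_k$ — I would instead keep $\beta_{k,i} \geq l_{\min}\gamma_k/(\rho_0\gamma_0 + \gamma_0^2)$ type estimates, or more cleanly use $\beta_{k,i} \geq l_{\min}/(\rho_k+\gamma_k) \geq c/\gamma_k^{?}$... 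The cleanest route: focus on the very first inner step $i = 0$, where $x_{k,0} = x_k$ and $\overline{y}_k(x_{k,0}) = \overline{y}_k(x_k)$, and note that by definition $v_{k,0}$ is exactly the minimizer appearing in the game-stationarity measure with parameter $\beta_{k,0}$. Then $\|\beta_{k,0} v_{k,0}\|$ controls the first component of $\mathcal{G}^{\beta_{k,0}}(x_k, \overline{y}_k(x_k))$, and since $y_k$ nearly equals $\overline{y}_{k-1}(x_k)$, the near-optimality of $y_k$ for the regularized problem gives $\mathrm{dist}(0, \nabla_y f(x_k,y_k) - \partial g(y_k) - N_{y_k}S) \lesssim (\rho_{k-1} + \gamma_{k-1})\sigma_y$, which is $O(\gamma_k)$ up to constants. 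Hence $\mathcal{G}^{\beta_k}(x_k, y_k) \leq \max\{\|\beta_{k,0}v_{k,0}\|, (2\rho_0+\gamma_0)\sigma_y \cdot (\text{decay in }k)\}$, wait — I should track that the $\mathrm{dist}$ term is bounded by $(2\rho_{k-1}+\gamma_{k-1})\sigma_y$ which, since $\rho_k \to 0$ and $\gamma_k = \gamma_0/k^{1/3}$, is eventually $\leq (2\rho_0 + \gamma_0)\sigma_y / k^{1/3}$ roughly. Combining: if $T(\varepsilon) = K$ then $\mathcal{G}^{\beta_k}(x_k,y_k) \geq \varepsilon$ for all $k < K$, forcing $\|\beta_{k,0} v_{k,0}\|^2 \geq$ something like $(\varepsilon^2 - C^2/k^{2/3})$, and plugging into the telescoped bound with $\beta_{k,0}\|v_{k,0}\|^2 = \|\beta_{k,0}v_{k,0}\|^2/\beta_{k,0} \geq \gamma_k \|\beta_{k,0}v_{k,0}\|^2 / l_{\max} \cdot (\rho_0+\gamma_0)^{-1}$... this yields $\sum_{k<K} \gamma_k \varepsilon^2 \lesssim \text{const}$, i.e. $\varepsilon^2 \sum_{k<K}k^{-1/3} \lesssim \text{const}$, and since $\sum_{k<K}k^{-1/3} \sim K^{2/3}$, we get $K \lesssim \varepsilon^{-3}$.

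The main obstacle I anticipate is the careful bookkeeping that makes the $\mathrm{dist}$-component of $\mathcal{G}^{\beta_k}(x_k,y_k)$ controllable: one must extract from the optimality condition for $\overline{y}_{k-1}(x_k)$ (which is $y_k$) a bound of the form $\mathrm{dist}(0, \nabla_y f(x_k, y_k) - \partial g(y_k) - N_{y_k}S) \leq (\rho_{k-1} + \gamma_{k-1})\|y_k\| \cdot(\text{const}) + \rho_{k-1}\|y_k - y_{k-1}\|$, hence $\leq (2\rho_{k-1} + \gamma_{k-1})\sigma_y$ using $S$ compact, and then argue that for this term to be $\geq \varepsilon$ one needs $k$ small, i.e. $k \leq ((2\rho_0+\gamma_0)\sigma_y/\varepsilon)^3$ — this is precisely the $\max\{(2\rho_0+\gamma_0)\sigma_y, C\}^3/\varepsilon^3$ structure in the statement, where the two cases ("$\mathrm{dist}$ term dominates" vs "$\|\beta v\|$ term dominates") produce the two arguments of the $\max$. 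The other delicate point is ensuring the telescoping of the $Q_k \to Q_{k+1}$ discrepancy truly collapses to the finite constant $2(\overline T + 1)S_\rho\sigma_y^2 + \tfrac{\gamma_0}{2}\sigma_y^2$ rather than diverging; this relies on $\{\rho_k\}\in\mathcal{S}$ being summable and on $\{\gamma_k\}$ being monotone, so I would be careful to verify every error term is either multiplied by $\rho_k$ (summable) or forms a telescoping difference in $\gamma_k$. Once these two estimates are in hand, the final inequality follows by dividing through and taking the appropriate root, and adding $1$ to account for the ceiling in passing from a strict summation bound to $T(\varepsilon)$.
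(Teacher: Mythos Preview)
Your approach is essentially the same as the paper's: telescope the descent inequality \eqref{eq:sufficient descent} through the bridge $Q_{k+1}(x_{k+1})\le Q_k(x_{k,T_k})+\frac{\gamma_k-\gamma_{k+1}}{2}\sigma_y^2+2\rho_k\sigma_y^2$, keep only the $i=0$ term, convert $\beta_{k,0}\|v_{k,0}\|^2=\|\beta_{k,0}v_{k,0}\|^2/\beta_{k,0}\ge(\gamma_0 k^{-1/3}/l_{\max})\|\beta_{k,0}v_{k,0}\|^2$, and use $\sum k^{-1/3}\asymp K^{2/3}$.

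Two points to tighten. First, the paper does not sum over $k=0,\dots,K-1$ but over a dyadic window $K_1\le k\le\lceil 2^{3/2}K_1\rceil$; this is what produces the clean factor $\tfrac32 K_1^{2/3}$ and hence the exact constant $C$ in the statement. Your version gives the same $\mathcal{O}(\varepsilon^{-3})$ order but not the stated constant without an analogous trick.

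Second, and more importantly, you are slightly inconsistent about which $y$-value enters the stationarity measure. The direction $v_{k,0}$ is built from $\nabla_x f(x_k,\overline{y}_k(x_k))$, so $\|\beta_{k,0}v_{k,0}\|$ is the first component of $\mathcal{G}^{\beta_{k,0}}(x_k,\overline{y}_k(x_k))$, \emph{not} of $\mathcal{G}^{\beta_{k,0}}(x_k,y_k)$ (recall $y_k=\overline{y}_{k-1}(x_k)$, which is an exact equality, not a near-equality). The paper accordingly bounds the $\mathrm{dist}$-component at the \emph{same} point $\overline{y}_k(x_{k,0})$ via its first-order optimality condition, obtaining $(2\rho_k+\gamma_k)\sigma_y$. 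If you instead use $y_k=\overline{y}_{k-1}(x_k)$ for the $\mathrm{dist}$-component, you would get $(2\rho_{k-1}+\gamma_{k-1})\sigma_y$, but then $v_{k,0}$ no longer matches the $u$ in the definition of $\mathcal{G}^{\beta_{k,0}}(x_k,y_k)$, and the argument breaks. Align both components to the pair $(x_k,\overline{y}_k(x_k))$ as the paper does.
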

    \begin{proof}
    Let $\beta_k:=\beta_{k,0},~v_k:=v_{k,0}$. Since $\gamma_{k+1}\leq \gamma_k$, $\rho_{k+1}\leq \rho_k$, and $x_{k+1}=x_{k,T_k}$, by the definition of $Q_k$ and $\sigma_y$, we have
    \begin{align*}
        &Q_{k+1}(x_{k+1})\\=&h(x_{k+1})+\max_{y\in S} f(x_{k+1},y)-g(y)-\frac{\gamma_{k+1}}{2}\|y\|^2-\frac{\rho_{k+1}}{2}\|y-y_{k+1}\|^2 \\
        \leq& h(x_{k,T_k})+\max_{y\in S} f(x_{k, T_k},y)-g(y)-\frac{\gamma_k}{2}\|y\|^2-\frac{\rho_k}{2}\|y-y_k\|^2\\&+\frac{\gamma_k-\gamma_{k+1}}{2}\sigma_y^2+2\rho_k\sigma_y^2\\
     =&Q_k(x_{k,T_k})+\frac{\gamma_k-\gamma_{k+1}}{2}\sigma_y^2+2\rho_k\sigma_y^2.      
    \end{align*}
Combining this with \eqref{eq:sufficient descent} and Proposition \ref{prop:line-search}, we obtain that 
\begin{align*}
    Q_{k+1}(x_{k+1})\leq Q_k(x_{k})+2(T_k+1)\rho_k\sigma_y^2+\frac{\gamma_k-\gamma_{k+1}}{2}\sigma_y^2-c_1\eta^{J_1}\beta_k\|v_k\|^2.
\end{align*}
Summing the above inequality over $k=K_1, K_1+1, \dots, K$ yields 
\begin{align}\label{the:ineq1}
    \sum_{k=K_1}^{K}c_1\eta^{J_1}\beta_k\|v_{k}\|^2\leq {Q}_{K_1}(x_{K_1})-{Q}_{K}(x_{K+1})+\frac{\gamma_{K_1}-\gamma_{K}}{2}\sigma_y^2+2(\overline{T}+1)S_{\rho}\sigma_y^2
\end{align} 
 with $S_\rho=\sum_{k=0}^{\infty}\rho_k<\infty$. Since $\beta_k\leq \frac{l_{\max}}{\gamma_0k^{-1/3}}$, then
\begin{align}\label{the:ineq2}
    \sum_{k=K_1}^{K}c_1\eta^{J_1}\beta_k\|v_{k}\|^2\geq (\frac{c_1\eta^{J_1}\gamma_0 }{l_{\max}}\sum_{k=K_1}^{K}\frac{1}{k^{1/3}})\min_{K_1\leq k\leq K}\|\beta_k v_k\|^2.
\end{align}
Note that 
\begin{align*}
    \sum_{k=K_1}^{K} \frac{1}{k^{1/3}} \geq \int_{K_1}^{K+1} \frac{1}{x^{1/3}} dx=\frac{3}{2}((K+1)^{2/3}-K_1^{2/3}),
\end{align*} let $K=\lceil{2^{3/2}K_1}\rceil$ ,  then we obtain
\begin{align}\label{the:ineq3}
  \sum_{k=K_1}^{\lceil{2^{3/2}K_1}\rceil}\frac{1}{k^{1/3}} \geq \frac{3}{2}K_1^{2/3}.
\end{align}
Therefore, combining \eqref{the:ineq1}, \eqref{the:ineq2}, \eqref{the:ineq3} and the bounds $Q_{K}(x_{K_1})\leq \overline{Q}$ and $Q_{\lceil{2^{3/2}K_1}\rceil+1}(x_{\lceil{2^{3/2}K_1}\rceil+1})\geq \underline{Q}$,  we have \begin{align}\label{the:ineq4}
    \min_{K_1\leq k\leq \lceil{2^{3/2}K_1}\rceil} \|\beta_k v_k\|^2\leq \frac{2l_{\max}}{3c_1\eta^{J_1}\gamma_0 K_1^{2/3}}(\overline{Q}-\underline{Q}+\frac{\gamma_0}{2}\sigma_y^2+2(\overline{T}+1)S_{\rho}\sigma_y^2).
\end{align}
In view of definition \eqref{y-bar} and the associated first-order optimality condition, for $\overline{y}_k(x_{k,0})$, we have that
{\begin{align*}
    &\rho_k(\overline{y}_k(x_{k,0})-y_k)+\gamma_k \overline{y}_k(x_{k,0})\\\in&\nabla_y f(x_{k,0},\overline{y}_k(x_{k,0}))-\partial g(\overline{y}_k(x_{k,0}))-N_{\overline{y}_k(x_{k,0})}S.
\end{align*} }
 Invoking this inclusion, we further obtain that
{\begin{equation}\label{the:ineq5}
    \begin{aligned}
    &\mathrm{dist}(0,\nabla_y f(x_{k,0},\overline{y}_k(x_{k,0}))-\partial g(\overline{y}_k(x_{k,0}))-N_{\overline{y}_k(x_{k,0})}S)\\
    \leq &\|\rho_k(\overline{y}_k(x_{k,0})-y_k)+\gamma_k \overline{y}_k(x_{k,0})\|\\
    \leq &\sigma_y(2\rho_k+\gamma_k).
\end{aligned}
\end{equation}}
Finally, by the definition of $\rho_k$, $\gamma_k$ and $\mathcal{G}^{\beta_k}$ , combining \eqref{the:ineq4} and \eqref{the:ineq5}, we  conclude that \[\min_{K_1\leq k\leq \lceil 2^{3/2}K_1\rceil }\mathcal{G}^{\beta_k}(x_{k,0},\overline{y}_k(x_{k,0}))\leq \frac{\max\{(2\rho_0+\gamma_0)\sigma_y ,C\}}{K_1^{1/3}}\]
with
\[C=\frac{\sqrt{2l_{\max}(\overline{Q}-\underline{Q}+\frac{\gamma_0}{2}\sigma_y^2+2(\overline{T}+1)S_{\rho}\sigma_y^2)}}{\sqrt{3c_1\eta^{J_1}\gamma_0}},\]  
and the proof is finished. 
    \end{proof}

Theorem \ref{the:converge} demonstrates that the proposed MPGDA-PA algorithm can find an $\varepsilon$-game-stationary point for problem \eqref{Prob} within $\mathcal{O}(\varepsilon^{-3})$ outer iterations. In view of Proposition \ref{epsilon-staionary}, this also indicates  that the outer iteration complexity of the MPGDA-PA algorithm for returning an $\varepsilon$-optimization-stationary point is $\mathcal{O}(\varepsilon^{-3})$.

\section{The Proposed MPGDA-PGA Algorithm}\label{sec:5}
In this section, we propose the MPGDA-PGA algorithm and establish its iteration complexity. In contrast to the MPGDA-PA algorithm, the MPGDA-PGA algorithm performs a single proximal gradient ascent step to update $y$, which significantly reduces the computational cost of each iteration. 

Motivated in part by \cite{xuUnifiedSingleloopAlternating2023}, we develop the MPGDA-PGA algorithm by alternatively performing one manifold proximal gradient descent step and a proximal gradient ascent step for a regularized version of the original function, i.e.,
\begin{align*}
  F_k(x,y):=  f(x,y)+h(x)-g(y)-\frac{\gamma_{k-1}}{2}\|y\|^2
\end{align*}
where $\gamma_{k-1}> 0$ is the regularization parameter. More specifically, for a given pair $(x_k,y_k)\in \mathcal{M}\times S$ at the $k$-th iteration, the proposed MPGDA-PGA first computes a manifold proximal gradient direction $v_k$ similarly as in MPGDA-PA, that is:
\begin{align}\label{update-v-GA}
    v_k:= \mathop{\mathrm{argmin}}_{v\in T_{x_k}\mathcal{M}}\langle \nabla_x f(x_k, y_k), v\rangle + h(x_k+v) + \frac{\beta_k}{2}\|v\|^2,
\end{align} 
where $\beta_k>0$ is the proximal parameter. Then, it generates $x_{k+1}:=R_{x_k}(\alpha_k v_k)$ for some appropriate $\alpha_k\in (0,1]$, and accordingly set $y_{k+1}:=\hat{y}_k(x_{k+1})$,  where the mapping $\hat{y}_k:\mathbb{E}_1\to\mathbb{E}_2$ is defined by 
\begin{align}\label{y-update-GA}
\hat{y}_{k}(x) := \mathop{\mathrm{argmax}}_{y\in S} \left\langle \nabla_y f(x, y_k)-\gamma_k y_k, y-y_k\right\rangle - \frac{1}{2\rho}\|y-y_k\|^2 - g(y)
\end{align} 
with $\rho>0$ being the proximal parameter. Before we propose the  MPGDA-PGA algorithm formally, we establish the following lemma regarding the difference of $F_{k+1}(x_{k+1},y_{k+1})$ and $F_k(x_k,y_k)$.
\begin{lemma}\label{lem-desc-F-GA}
  Let $x_k\in\mathcal{M},~y_{k-1} \in S$ be given. Define $v_k$ as in \eqref{update-v-GA}, and set $x_{k+1}:=R_{x_k}(\alpha_k v_k)$ for some $\alpha_k\in(0,1]$. Further, let $y_k:=\hat{y}_{k-1}(x_k)$ and $y_{k+1}:=\hat{y}_k(x_{k+1})$ with $\hat{y}_k$ defined in \eqref{y-update-GA}. If $\rho$ and $\gamma_{k}$ satisfy 
  \begin{align}\label{range-rho,gamma}
    0<\rho\leq \frac{1-2(\kappa+1)^{-1/4}}{L_y},~~\gamma_k=\frac{2}{\rho(k+\kappa+2)^{1/4}} \text{ with } \kappa>15,
\end{align} then it holds that 
    \begin{equation}\label{ineq:F-1}
   \begin{aligned}
    &F_{k+1}(x_{k+1},y_{k+1})-F_k(x_{k},y_k)\\
    \leq &-(\beta_k-b_1\alpha_k)\alpha_k\| v_k\|^2+\frac{1}{2\rho}\|y_{k}-y_{k-1}\|^2+\frac{1}{\rho}\|y_{k+1}-y_k\|^2+\frac{\gamma_{k-1}-\gamma_k}{2}\sigma_y^2,
\end{aligned} 
\end{equation}
where \[b_1:=M_2(L_h+G)+M_1^2(L_x+\rho L_y^2)/2\]
with $G=\sup\{ \|\nabla_x f(x,y)\|\mid x\in\mathcal{M},y\in S\}<\infty$, $M_1$ and $M_2>0$ being defined in Proposition \ref{retr}.
\end{lemma}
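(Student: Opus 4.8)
The plan is to decompose the change $F_{k+1}(x_{k+1},y_{k+1})-F_k(x_k,y_k)$ into an $x$-part and a $y$-part and bound each separately. First I would write
\[
F_{k+1}(x_{k+1},y_{k+1})-F_k(x_k,y_k)
= \big[F_k(x_{k+1},y_{k+1})-F_k(x_k,y_k)\big] + \big[F_{k+1}(x_{k+1},y_{k+1})-F_k(x_{k+1},y_{k+1})\big],
\]
where the last bracket equals $\tfrac{\gamma_{k-1}-\gamma_k}{2}\|y_{k+1}\|^2 \le \tfrac{\gamma_{k-1}-\gamma_k}{2}\sigma_y^2$ since $\gamma_{k-1}\ge\gamma_k$ (guaranteed by \eqref{range-rho,gamma}) and $\|y_{k+1}\|\le\sigma_y$. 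For the $x$-descent, I would further split $F_k(x_{k+1},y_{k+1})-F_k(x_k,y_k)$ through the intermediate point $(x_{k+1},y_k)$: the step from $(x_k,y_k)$ to $(x_{k+1},y_k)$ is handled exactly as in the ManPG descent analysis (\cite[Lemma~5.2]{chenProximalGradientMethod2020}, as already invoked in Proposition \ref{prop:sufficient descent}), using $x_{k+1}=R_{x_k}(\alpha_k v_k)$, the retraction bounds from Proposition \ref{retr}, $L_h$-Lipschitzness of $h$, the bound $G$ on $\nabla_x f$, and $L_x$-smoothness of $f(\cdot,y)$; this produces a term $-(\beta_k - (M_2(L_h+G)+M_1^2 L_x/2)\alpha_k)\alpha_k\|v_k\|^2$. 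The change in the $y$-argument, $f(x_{k+1},y_{k+1})-g(y_{k+1})-\tfrac{\gamma_{k-1}}{2}\|y_{k+1}\|^2 - \big(f(x_{k+1},y_k)-g(y_k)-\tfrac{\gamma_{k-1}}{2}\|y_k\|^2\big)$ at fixed $x_{k+1}$, is controlled using the definition \eqref{y-update-GA} of $\hat y_k$ and its first-order optimality condition.

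The core of the argument is the $y$-ascent bound. From the optimality condition for $y_{k+1}=\hat y_k(x_{k+1})$ one gets
\[
\tfrac{1}{\rho}(y_k-y_{k+1}) - \gamma_k y_k + \nabla_y f(x_{k+1},y_k) \in \partial g(y_{k+1}) + N_{y_{k+1}}S,
\]
and combining this with the concavity of $-g$ and the three-point identity $2\langle a,a-b\rangle = \|a\|^2+\|a-b\|^2-\|b\|^2$ gives a bound on $-g(y_{k+1})+g(y_k)$ in terms of $\langle \nabla_y f(x_{k+1},y_k)-\gamma_k y_k, y_{k+1}-y_k\rangle$ plus $\tfrac{1}{2\rho}(\|y_k-y_{k+1}\|^2 \text{ terms})$. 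Then I would use $L_y$-Lipschitzness of $\nabla_y f$ to pass from $\nabla_y f(x_{k+1},y_k)$ to the true gradient direction at $y_{k+1}$ (paying a $\tfrac{L_y}{2}\|y_{k+1}-y_k\|^2$-type penalty, and an $x$-perturbation term of order $\|x_{k+1}-x_k\|^2 \le M_1^2\alpha_k^2\|v_k\|^2$ which is absorbed into the $b_1\alpha_k$ coefficient via the $M_1^2\rho L_y^2/2$ summand in $b_1$), and the $\gamma_k$-regularization term similarly. The upshot should be that the $y$-change is bounded above by something like $-\tfrac{1}{\rho}\|y_{k+1}-y_k\|^2 + (\text{curvature penalties})\|y_{k+1}-y_k\|^2 + \tfrac{1}{2\rho}\|y_k-y_{k-1}\|^2$, where the $\|y_k-y_{k-1}\|^2$ term arises because the ascent step is built at the \emph{previous} iterate's gradient information (this is the standard price of a single inexact ascent step, cf.\ \cite{xuUnifiedSingleloopAlternating2023}).

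The main obstacle is the careful bookkeeping in the $y$-ascent estimate: one must track how the error from evaluating $\nabla_y f$ at $(x_{k+1},y_k)$ rather than near $(x_{k+1},y_{k+1})$, together with the linearization of the $\gamma_k\|y\|^2$ regularizer, combine so that the net coefficient of $\|y_{k+1}-y_k\|^2$ is at most $+\tfrac{1}{\rho}$ (not worse), which is precisely where the restriction $\rho \le (1-2(\kappa+1)^{-1/4})/L_y$ and $\gamma_k = 2/(\rho(k+\kappa+2)^{1/4})$ in \eqref{range-rho,gamma} must be used — the constant $\kappa>15$ is tuned so the $\gamma_k$-induced perturbation is dominated. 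Once the $y$-bound is in hand, summing it with the $x$-descent bound and the $\gamma$-difference term, and grouping the $\alpha_k\|v_k\|^2$ coefficients into $b_1 = M_2(L_h+G)+M_1^2(L_x+\rho L_y^2)/2$, yields exactly \eqref{ineq:F-1}.
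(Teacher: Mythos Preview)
Your $x$-descent and $\gamma$-difference pieces are fine and match the paper, but the $y$-ascent part has a genuine gap. You invoke the optimality condition of $y_{k+1}=\hat y_k(x_{k+1})$, which yields an element of $\partial g(y_{k+1})+N_{y_{k+1}}S$. Convexity of $g$ at $y_{k+1}$ then only gives a \emph{lower} bound on $-g(y_{k+1})+g(y_k)$, which goes the wrong way for upper-bounding $F_{k+1}(x_{k+1},y_{k+1})-F_k(x_k,y_k)$. To get an upper bound you need a subgradient of $g$ at $y_k$, and the only available source is the optimality condition of the \emph{previous} step $y_k=\hat y_{k-1}(x_k)$; this is precisely how $y_{k-1}$ and the term $\tfrac{1}{2\rho}\|y_k-y_{k-1}\|^2$ enter (not, as you suggest, because the ascent step for $y_{k+1}$ uses stale gradient information --- it uses $\nabla_y f(x_{k+1},y_k)$, which contains no $y_{k-1}$). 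Relatedly, your expected ``upshot'' with a leading $-\tfrac{1}{\rho}\|y_{k+1}-y_k\|^2$ has the wrong sign: the ascent step increases $F$, and the target bound \eqref{ineq:F-1} carries $+\tfrac{1}{\rho}\|y_{k+1}-y_k\|^2$.

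The paper combines concavity of $f_{k+1}(x_{k+1},\cdot)$ with the subgradient of $g$ at $y_k$ to obtain $\langle \nabla_y f_{k+1}(x_{k+1},y_k)-\nabla_y f_k(x_k,y_{k-1})+\tfrac{1}{\rho}(y_k-y_{k-1}),\,y_{k+1}-y_k\rangle$ as the driver of the $y$-change. This gradient difference is then split through the auxiliary vector $w_k:=y_{k+1}-2y_k+y_{k-1}$; the cross term $\langle \nabla_y f_k(x_k,y_k)-\nabla_y f_k(x_k,y_{k-1}),\,y_k-y_{k-1}\rangle$ is controlled by \emph{cocoercivity} of $\nabla_y f_k(x_k,\cdot)$ (concave plus Lipschitz gradient, \cite[Theorem~5.8]{Beck2017First}), producing $-\tfrac{1}{2(L_y+\gamma_{-1})}\|\nabla_y f_k(x_k,y_k)-\nabla_y f_k(x_k,y_{k-1})\|^2$ which absorbs the $+\tfrac{\rho}{2}$ of the same quantity arising from Cauchy--Schwarz on the $w_k$-term. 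This absorption is exactly where $\rho<1/(L_y+\gamma_{-1})$ --- i.e.\ the bound in \eqref{range-rho,gamma} --- is used in this lemma; the restriction $\kappa>15$ itself is not needed here (it is used only later in Proposition~\ref{prop:desc-F-GA}). None of this structure is visible in your sketch, and without it the bookkeeping you describe cannot close.
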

\begin{proof}
Let $f_k(x,y):=f(x,y)-\frac{\gamma_{k-1}}{2}\|y\|^2$. From the definition of $y_{k}$, it holds that 
{\begin{align}\label{optimal-def-yk}
    0\in \nabla_y f_{k}(x_{k}, y_{k-1})- \partial g(y_{k})-N_{y_{k}}S-\frac{1}{\rho}(y_{k}-y_{k-1}).
\end{align} }
By the concavity of $f_{k+1}(x,\cdot)-g(\cdot)$, it follows that
\begin{equation}\label{ineq:y-1}
   \begin{aligned}
    &f_{k+1}(x_{k+1},y_{k+1})-g(y_{k+1})-(f_{k+1}(x_{k+1},y_k)-g(y_k))\\
    \leq& \langle \nabla_y f_{k+1}(x_{k+1},y_k)-\nabla_y f_{k}(x_k,y_{k-1})+\frac{1}{\rho}( y_k-y_{k-1}),y_{k+1}-y_k\rangle.
\end{aligned} 
\end{equation}
Note that
\begin{equation}\label{eq:nabla-inner}
   \begin{aligned}
   &\langle \nabla_y f_{k+1}(x_{k+1},y_k)-\nabla_y f_{k}(x_k,y_{k-1}),y_{k+1}-y_k\rangle.\\
    =&\langle\nabla_y f(x_{k+1},y_k)-\nabla_y f(x_k,y_{k}),y_{k+1}-y_k\rangle\\ 
    &+\langle\nabla_y f_{k}(x_{k},y_k)-\nabla_y f_{k}(x_k,y_{k-1}),y_{k}-y_{k-1}\rangle\\
    &+\langle\nabla_y f_{k}(x_{k},y_k)-\nabla_y f_{k}(x_k,y_{k-1}),w_k\rangle\\
    &-(\gamma_k-\gamma_{k-1})\langle y_k,y_{k+1}-y_k\rangle.
\end{aligned} 
\end{equation}
where $w_k:=y_{k+1}-2y_k+y_{k-1}$.
 Next, we provide bounds on the inner product terms of \eqref{eq:nabla-inner}. Firstly, invoking \cite[Theorem~5.8]{Beck2017First}, by the concavity of $f_{k}(x,\cdot)$ and the Lipschitz continuity of $\nabla_y f_{k}(x,\cdot)$,  it holds that 
\begin{equation}\label{ineq:inner-1-1}
    \begin{aligned}
   &\langle\nabla_y f_{k}(x_{k},y_k)-\nabla_y f_{k}(x_k,y_{k-1}),y_{k}-y_{k-1}\rangle\\\leq&-\frac{1}{L_y+\gamma_{k-1}}\|\nabla_y f_{k}(x_{k},y_k)-\nabla_y f_{k}(x_k,y_{k-1})\|^2\\
   \leq& -\frac{1}{L_y+\gamma_{-1}}\|\nabla_y f_{k}(x_{k},y_k)-\nabla_y f_{k}(x_k,y_{k-1})\|^2,
\end{aligned}
\end{equation}
the last inequality comes from the fact that $\gamma_{k-1}\leq\gamma_{-1}$. 
In addition, by the strong concavity of $f_{k}(x,\cdot)$, we have
\begin{equation}\label{ineq:inner-1-2}
    \begin{aligned}
   \langle\nabla_y f_{k}(x_{k},y_k)-\nabla_y f_{k}(x_k,y_{k-1}),y_{k}-y_{k-1}\rangle\leq-\gamma_{k-1}\|y_k-y_{k-1}\|^2.
\end{aligned}
\end{equation}
Combining \eqref{ineq:inner-1-1} and \eqref{ineq:inner-1-2}, it follows that
\begin{equation}\label{ineq:inner-1}
    \begin{aligned}
   &\langle\nabla_y f_{k}(x_{k},y_k)-\nabla_y f_{k}(x_k,y_{k-1}),y_{k}-y_{k-1}\rangle\\
   \leq&-\frac{\gamma_{k-1}}{2}\|y_k-y_{k-1}\|^2-\frac{1}{2(L_y+\gamma_{-1})}\|\nabla_y f_{k}(x_{k},y_k)-\nabla_y f_{k}(x_k,y_{k-1})\|^2.
\end{aligned}
\end{equation}
Secondly, by Cauchy-Schwarz inequality and the Lipschitz continuity of $\nabla_y f(x,y)$, it holds that
\begin{equation}
    \begin{aligned}
    &\langle\nabla_y f(x_{k+1},y_k)-\nabla_y f(x_k,y_{k}),y_{k+1}-y_k\rangle\\\leq&\frac{\rho L_y^2}{2}\|x_{k+1}-x_k\|^2+\frac{1}{2\rho}\|y_{k+1}-y_k\|^2,\label{ineq:inner-2}
\end{aligned}
\end{equation}
\begin{equation}
    \begin{aligned}
    &\langle\nabla_y f_{k}(x_{k},y_k)-\nabla_y f_{k}(x_k,y_{k-1}),w_k\rangle\\
    \leq &\frac{\rho}{2}\|\nabla_y f_{k}(x_{k},y_k)-\nabla_y f_{k}(x_k,y_{k-1})\|^2+\frac{1}{2\rho}\|w_k\|^2.\label{ineq:inner-3}
\end{aligned}
\end{equation}
By the range of $\rho$ and direct calculation,  it holds that $\rho<\frac{1}{L_y+\gamma_{-1}}$.
Combining this, the definition of $f_{k}(x_{k+1},y_{k+1})$,
 \eqref{ineq:y-1}, \eqref{eq:nabla-inner},  \eqref{ineq:inner-1}, \eqref{ineq:inner-2}, \eqref{ineq:inner-3}, and the following identities 
\begin{align}
    \frac{1}{\rho}\langle y_k-y_{k-1},y_{k+1}-y_k\rangle&=\frac{1}{2\rho}(\|y_{k+1}-y_k\|^2+\|y_k-y_{k-1}\|^2-\|w_k\|^2),\\  
    (\gamma_k-\gamma_{k-1})\langle y_k,y_{k+1}-y_k\rangle&=\frac{\gamma_k-\gamma_{k-1}}{2}(\|y_{k+1}\|^2-\|y_k\|^2-\|y_{k+1}-y_k\|^2),\label{eq-gamma-y}
\end{align}
we deduce that
\begin{equation}\label{ineq:y-2}
   \begin{aligned}
    &f(x_{k+1},y_{k+1})-g(y_{k+1})-(f(x_{k+1},y_k)-g(y_k))\\
    \leq &\frac{\rho L_y^2}{2}\|x_{k+1}-x_k\|^2+\frac{1}{\rho}\|y_{k+1}-y_k\|^2+\frac{1}{2\rho}\|y_k-y_{k-1}\|^2\\
    &+\frac{\gamma_{k-1}}{2}(\|y_{k+1}\|^2-\|y_k\|^2). 
\end{aligned} 
\end{equation}
On the other hand, following an argument similar to the proof of \cite[Lemma~5.2]{chenProximalGradientMethod2020}, we have
\begin{align*}
    f(x_{k+1}, y_k)+h(x_{k+1})\leq f(x_{k}, y_k)+h(x_{k})-(\frac{\beta_{k}}{\alpha_k}-b_2) \|\alpha_k v_{k}\|^2,
\end{align*}
where $b_2=M_2(L_h+G)+M_1^2L_x/2$.
By Proposition \ref{retr}, this and inequality \eqref{ineq:y-2} yield inequality \eqref{ineq:F-1}.
\end{proof}

 However, Lemma \ref{lem-desc-F-GA} alone does not yield an upper bound on a positively weighted sum of $\|v_k\|^2$ and $\|y_{k+1}-y_k\|^2$, which {provides} an upper bound for $\mathcal{G}^{\beta_k}(x_k,y_k)$ defined in \eqref{game-stationary measure}. To address this difficulty, we further refine the result in \eqref{ineq:F-1}. Specifically, in the next proposition, we derive a new inequality (see \eqref{ineq:H-1}) to better elucidate the relationship between $\|v_k\|^2$ and $\|y_{k+1}-y_k\|^2$. Furthermore, by using this new inequality, we construct a new value function (see \eqref{def-mF}) which involves $F_k$ and some additional terms associated with $\|y-y_{k-1}\|^2$ and $\|y\|^2$.

\begin{proposition}\label{prop:desc-F-GA}
 Let $x_k\in\mathcal{M},~y_{k-1} \in S$ be given. Define $v_k$ as in \eqref{update-v-GA}, and set $x_{k+1}:=R_{x_k}(\alpha_k v_k)$ for some $\alpha_k\in(0,1]$. Further, let $y_k:=\hat{y}_{k-1}(x_k)$ and $y_{k+1}:=\hat{y}_k(x_{k+1})$ with $\hat{y}_k$ defined in \eqref{y-update-GA}. Also denote \begin{align}
\mathcal{H}_{k}(y)&:=\left(\frac{4}{\rho^2\gamma_k} - \frac{4}{\rho}\right)\|y-y_{k-1}\|^2 + \frac{4}{\rho}\left(1-\frac{\gamma_{k-1}}{\gamma_k}\right)\|y\|^2,\label{def-hH}\\
{\mathcal{F}}_{k}(x,y)&:= F_k(x,y)+\frac{1}{2\rho}\|y-y_{k-1}\|^2 +(\frac{4}{\rho}\frac{\gamma_{k-1}}{\gamma_{k}}+\frac{\gamma_{k-1}}{2})\sigma_y^2+ \mathcal{H}_{k}(y).\label{def-mF}
\end{align}
If $\rho$ and $\gamma_k$ satisfy \eqref{range-rho,gamma}, then it holds that 
\begin{equation}\label{ineq:desc-F-2}
    \begin{aligned}
    &\mathcal{F}_{k+1}(x_{k+1},y_{k+1})-\mathcal{F}_{k}(x_k,y_k)\\\leq&-\big(\beta_k-(b_1+\frac{8L_y^2M_1^2}{\rho\gamma_k^2})\alpha_k\big)\alpha_k\|v_k\|^2-\frac{1}{10\rho}\|y_{k+1}-y_k\|^2.
\end{aligned}
\end{equation}
\end{proposition}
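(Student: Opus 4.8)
The plan is to upgrade the one-sided descent estimate \eqref{ineq:F-1} of Lemma \ref{lem-desc-F-GA}, which still carries the uncontrolled positive terms $\tfrac{1}{2\rho}\|y_k-y_{k-1}\|^2$ and $\tfrac{1}{\rho}\|y_{k+1}-y_k\|^2$, into a genuine Lyapunov decrease for $\mathcal{F}_k$. First I would derive a complementary inequality (this is the new inequality \eqref{ineq:H-1} announced just before the statement) that bounds the $y$-movement $\|y_{k+1}-y_k\|^2$ together with the second difference $\|w_k\|^2$, $w_k:=y_{k+1}-2y_k+y_{k-1}$, by a combination of $\|y_k-y_{k-1}\|^2$, $\alpha_k^2\|v_k\|^2$ and a $(\gamma_{k-1}-\gamma_k)$-term. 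Then I would add a suitable positive multiple of this inequality to \eqref{ineq:F-1}, compute the increment $\mathcal{F}_{k+1}(x_{k+1},y_{k+1})-\mathcal{F}_k(x_k,y_k)$ using the explicit form of $\mathcal{H}_k$ in \eqref{def-hH} and of the constant block in \eqref{def-mF}, and verify that all residual positive terms cancel, leaving exactly the right-hand side of \eqref{ineq:desc-F-2}.

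For the complementary inequality I would exploit that $\hat y_k$ in \eqref{y-update-GA} is a proximal-gradient ascent step, so $y_{k+1}=\hat y_k(x_{k+1})$ and $y_k=\hat y_{k-1}(x_k)$ are characterized by the first-order optimality inclusions of their respective ($\tfrac1\rho$-strongly concave) subproblems. Subtracting these two inclusions, testing against $y_{k+1}-y_k$, and invoking (i) monotonicity of $\partial g+N_S$, I obtain a lower bound on an inner product of gradient differences; I would then control that inner product with (ii) the $\gamma_{k-1}$- and $\gamma_k$-strong concavity, plus the cocoercivity bound \cite[Theorem~5.8]{Beck2017First} of $\nabla_y f_k(x_k,\cdot)$ already used in the proof of Lemma \ref{lem-desc-F-GA}; (iii) the Lipschitz continuity of $\nabla_y f$ from Assumption \ref{Assumption}, which turns the $x$-shift into $L_y\|x_{k+1}-x_k\|$ and then, via Proposition \ref{retr}, into $L_yM_1\alpha_k\|v_k\|$; and (iv) the polarization identity $\tfrac1\rho\langle y_k-y_{k-1},y_{k+1}-y_k\rangle=\tfrac1{2\rho}(\|y_{k+1}-y_k\|^2+\|y_k-y_{k-1}\|^2-\|w_k\|^2)$, so as to keep a negative $\|w_k\|^2$ reserve against the Young-type splittings. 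The parameter window \eqref{range-rho,gamma} enters exactly here: $\rho L_y\le 1-2(\kappa+1)^{-1/4}$ and $\gamma_k=2/(\rho(k+\kappa+2)^{1/4})$ are tuned so the coefficient of $\|y_k-y_{k-1}\|^2$ produced on the right does not exceed the coefficient of $\|y_k-y_{k-1}\|^2$ that the $\mathcal{H}_k$-block already carries at step $k$, while the factor $8L_y^2M_1^2/(\rho\gamma_k^2)$ in \eqref{ineq:desc-F-2} is precisely what comes out of dividing by $\gamma_k$ twice (first to pass from a gradient difference back to $\|y_k-y_{k-1}\|$, then to absorb the cross term) and multiplying by $M_1^2\alpha_k^2\|v_k\|^2$.

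Finally I would assemble the telescoping sum. The running term $\tfrac1{2\rho}\|y-y_{k-1}\|^2$ in \eqref{def-mF} contributes $\tfrac1{2\rho}\|y_{k+1}-y_k\|^2-\tfrac1{2\rho}\|y_k-y_{k-1}\|^2$; the $\mathcal{H}_k$-block contributes the difference of two expressions with the growing coefficient $\tfrac4{\rho^2\gamma_k}-\tfrac4\rho$ on $\|\cdot-y_{k-1}\|^2$ and the small negative coefficient $\tfrac4\rho(1-\tfrac{\gamma_{k-1}}{\gamma_k})$ on $\|\cdot\|^2$; and the constant block $(\tfrac4\rho\tfrac{\gamma_{k-1}}{\gamma_k}+\tfrac{\gamma_{k-1}}{2})\sigma_y^2$ is engineered to dominate the leftover $\gamma$-gap constants once $\|y_k\|\le\sigma_y$ is used. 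Adding these to \eqref{ineq:F-1} and to the complementary inequality, every positive occurrence of $\|y_k-y_{k-1}\|^2$, $\|w_k\|^2$ and of the $\sigma_y$-constants cancels, the $\|x_{k+1}-x_k\|^2$ terms fold into $\alpha_k^2\|v_k\|^2$ through Proposition \ref{retr}, and one keeps only the fraction $\tfrac1{10\rho}$ of the available $-\|y_{k+1}-y_k\|^2$, giving \eqref{ineq:desc-F-2}. I expect the main obstacle to be exactly this coefficient bookkeeping: since $\gamma_k\downarrow 0$ there is no uniform contraction for the $y$-iterates, so no naive telescoping works — the estimate survives only because the $\mathcal{O}((k+\kappa)^{1/4})$ growth of the $\mathcal{H}_k$-coefficients is matched term by term against the defect bounds, and the numerical constants (the ``$\tfrac1{10}$'', the requirement $\kappa>15$) are chosen so the final batch of scalar inequalities closes; everything else is routine Cauchy--Schwarz/Young manipulation of the kind already carried out in Lemma \ref{lem-desc-F-GA}.
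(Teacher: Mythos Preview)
Your proposal is correct and follows essentially the same route as the paper: derive the complementary inequality \eqref{ineq:H-1} by subtracting the optimality inclusions for $y_k$ and $y_{k+1}$ and using monotonicity of $\partial g+N_S$, then expand the gradient difference via \eqref{eq:nabla-inner}--\eqref{ineq:inner-3} and the polarization identity $\tfrac{1}{\rho}\langle w_k,y_{k+1}-y_k\rangle=\tfrac{1}{2\rho}(\|y_{k+1}-y_k\|^2-\|y_k-y_{k-1}\|^2+\|w_k\|^2)$, multiply the result by $\tfrac{8}{\rho\gamma_k}$ to match the $\mathcal{H}_k$ coefficients, and add it to \eqref{ineq:F-1}. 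One small inaccuracy: the $\|w_k\|^2$ contributions cancel exactly between the polarization identity and the Young split \eqref{ineq:inner-3}, rather than leaving a negative reserve; but this does not affect the argument, and the bookkeeping you describe (including the role of $\kappa>15$ in securing the $\tfrac{1}{10\rho}$ coefficient) is exactly what the paper does.
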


\begin{proof}

Let $f_k(x,y):=f(x,y)-\frac{\gamma_{k-1}}{2}\|y\|^2$. By \eqref{optimal-def-yk} and the concavity of $-g$, 
we have
\begin{equation*}
   \begin{aligned}
    \frac{1}{\rho}\langle w_k,y_{k+1}-y_k\rangle\leq&\langle\nabla_y f_{k+1}(x_{k+1},y_k)-\nabla_y f_{k}(x_k,y_{k-1}),y_{k+1}-y_k\rangle,
\end{aligned} 
\end{equation*}
where $w_k=y_{k+1}-2y_k+y_{k-1}$. 
Similar to the proof in Lemma \ref{lem-desc-F-GA}, combining this with \eqref{eq:nabla-inner}, \eqref{ineq:inner-1}, \eqref{ineq:inner-2} (replacing $\rho$ by $2/\gamma_k$), \eqref{ineq:inner-3}, \eqref{eq-gamma-y}, $\gamma_k\leq\gamma_{k-1}$, and the following identity
\begin{align*}
    &\frac{1}{\rho}\langle w_k,y_{k+1}-y_k\rangle=\frac{1}{2\rho}(\|y_{k+1}-y_k\|^2-\|y_k-y_{k-1}\|^2+\|w_k\|^2), 
\end{align*}
we deduce that 
\begin{equation}\label{ineq:H-1}
    \begin{aligned}
    \frac{1}{2\rho}\|y_{k+1}-y_k\|^2\leq&\frac{1}{2\rho}\|y_k-y_{k-1}\|^2+\frac{\gamma_k}{4}\|y_{k+1}-y_k\|^2-\frac{\gamma_{k}}{2}\|y_{k}-y_{k-1}\|^2\\&+\frac{L_y^2}{\gamma_k}\|x_{k+1}-x_k\|^2+\frac{\gamma_{k-1}-\gamma_k}{2}(\|y_{k+1}\|^2-\|y_k\|^2)\\
    \leq&\frac{1}{2\rho}\|y_k-y_{k-1}\|^2+\frac{\gamma_k}{4}\|y_{k+1}-y_k\|^2-\frac{\gamma_{k}}{2}\|y_{k}-y_{k-1}\|^2\\&+\frac{\alpha_k^2M_1^2L_y^2}{\gamma_k}\|v_k\|^2+\frac{\gamma_{k-1}-\gamma_k}{2}(\|y_{k+1}\|^2-\|y_k\|^2),
\end{aligned}
\end{equation}
where the last inequality comes from Proposition \ref{retr}.
Multiplying \eqref{ineq:H-1} by $\frac{8}{\rho\gamma_{k}}$ and then rearranging it appropriately, we obtain
\begin{equation}\label{ineq:H-2}
    \begin{aligned}
\mathcal{H}_{k+1}(y_{k+1})\leq&\mathcal{H}_{k}(y_k)+\frac{8\alpha_k^2M_1^2L_y^2}{\rho\gamma_k^2}\|v_k\|^2+\frac{4}{\rho}(\frac{\gamma_{k-1}}{\gamma_{k}}-\frac{\gamma_k}{\gamma_{k+1}})\|y_{k+1}\|^2\\
    &-\big(\frac{2}{\rho}-\frac{4}{\rho^2}(\frac{1}{\gamma_{k+1}}-\frac{1}{\gamma_{k}})\big)\|y_{k+1}-y_k\|^2.
\end{aligned}
\end{equation}
Since $\frac{\gamma_{k-1}}{\gamma_{k}}>\frac{\gamma_k}{\gamma_{k+1}}$ and for $\kappa>15$,  \[\frac{1}{\rho}(\frac{1}{\gamma_k}-\frac{1}{\gamma_{k-1}})=\frac{1}{2}\big((k+\kappa+2)^{1/4}-(k+\kappa+1)^{1/4}\big)<\frac{1}{2}\big(3^{1/4}-2^{1/4}\big)<\frac{1}{10},\]
by \eqref{ineq:F-1} and \eqref{ineq:H-2}, we obtain \eqref{ineq:desc-F-2}. 
\end{proof}

{While Lemma \ref{lem-desc-F-GA} and Proposition \ref{prop:desc-F-GA} follow a similar analytical framework to the corresponding results in \cite{xuUnifiedSingleloopAlternating2023}, our work introduces key innovations and essential distinctions. Specifically, we address nonsmooth nonconvex-concave minimax problems on Riemannian manifolds, in contrast to the smooth Euclidean setting considered in \cite{xuUnifiedSingleloopAlternating2023}. The proofs of Lemma \ref{lem-desc-F-GA} and Proposition \ref{prop:desc-F-GA} are novel in that they integrate Riemannian proximal gradient steps with retraction-based estimates and carefully handle nonsmooth subgradient terms—elements that are absent in \cite{xuUnifiedSingleloopAlternating2023}. These adaptations are not merely technical but necessary to ensure convergence in the Riemannian nonsmooth regime.}

Proposition \ref{prop:desc-F-GA} implies that we can ensure the sufficient descent of $\mathcal{F}_k(x_k,y_k)$ by choosing a small enough stepsize $\alpha_k \in (0,1]$. For possible acceleration, we further incorporate a backtracking line search procedure regarding inequality \eqref{ineq:desc-F-2} to determine $\alpha_k$. The MPGDA-PGA algorithm is formally presented in Algorithm \ref{alg:MPGDA-PGA}.
\begin{algorithm}[!ht]
 	\caption{Manifold Proximal Gradient Descent Ascent-Proximal Gradient Ascent (MPGDA-PGA) }
     \label{alg:MPGDA-PGA}
    \begin{algorithmic}[1]
        \REQUIRE $c_1\in(0,1)$, $\eta\in(0,1)$, $\kappa>15$, $0<\rho\leq\frac{1-2(\kappa+1)^{-1/4}}{L_y}$, $\gamma_k=\frac{2}{\rho(k+\kappa+2)^{1/4}}$, $0<\hat{l}_{\min}<\hat{l}_{\max}$, $x_0\in\mathcal{M}$, $y_{-1}\in S$.
        \STATE Set $y_0=\hat{y}_{-1}(x_0)$.
        \FOR{$k=0,1, \dots$}
        \STATE Choose $l_k\in [\hat{l}_{\min},\hat{l}_{\max}]$ and set $\beta_k=l_k/\gamma_k^2$.
        \STATE Compute $v_k$ by \eqref{update-v-GA}.
         \STATE Find the smallest non-negative integer $j$ such that $x_k^j:=R_{x_k}(\eta^j v_k)$ satisfies
         \begin{align}\label{eq:sufficient descent-GA}
            {\mathcal{F}}_{k+1}(x_k^j,\hat{y}_k(x_k^j)) \leq {\mathcal{F}}_{k}(x_k,y_k) - c_1\eta^j\beta_k\|v_k\|^2 - \frac{1}{10\rho}\|\hat{y}_k(x_k^j)-y_k\|^2,
         \end{align}
        \STATE Set $x_{k+1} = x_k^j, ~y_{k+1} = \hat{y}_{k}(x_{k+1})$.
        \ENDFOR
        \STATE \textbf{return} $(x_{k+1},y_{k+1})$
    \end{algorithmic}
\end{algorithm}

Before proceeding, we make some remarks on the iteration procedure of the MPGDA-PGA algorithm. First, the update of $y_{k+1}$ in MPGDA-PGA can be formulated as:
\begin{equation}
y_{k+1} = \mathop{\mathrm{argmin}}_{y \in S} \left\{ g(y) + \frac{1}{2\rho} \| y - \left( (1-\rho\gamma_k)y_k + \rho\nabla_y f(x_{k+1}, y_k) \right) \|^2 \right\},
\end{equation} which is a proximal problem of the type \eqref{proximal-g} and can be evaluated exactly and efficiently. Second, in practice, we use some special choices of $l_k \in [\hat{l}_{\min}, \hat{l}_{\max}]$ to enhance the efficiency of MPGDA-PGA in the experiments. Motivated by the Riemannian Barzilai-Borwein (BB) stepsize \cite{iannazzo2018riemannian,2013AWen}, we choose  $l_{k}$ as follows: 
\[l_k = \begin{cases}
    \min\left\{\max\left\{\hat{l}_{\min}, \gamma_k^2\left|\frac{\langle \Delta X_k, \Delta R_k\rangle}{\|\Delta X_k\|^2}\right|\right\},\hat{l}_{\max}\right\}, & \text{if } \Delta X_k \neq 0,\\
    \hat{l}_{\max}, & \text{if } \Delta X_k = 0,
\end{cases}\]
where $\Delta X_k = x_k - x_{k-1}$ and $\Delta R_k = \mathrm{grad} f(x_k, y_k) - \mathrm{grad} f(x_{k-1}, y_{k-1})$.

By Proposition \ref{prop:desc-F-GA}, the backtracking line search procedure in Algorithm \ref{alg:MPGDA-PGA} is well defined; i.e., it will terminate after a finite number of steps. We present this result in the following proposition.
\begin{proposition}\label{prop:line-search-GA}
For all $k\geq 0$, the condition \eqref{eq:sufficient descent-GA} in Algorithm \ref{alg:MPGDA-PGA} is satisfied within at most $J_2$ backtracking steps, where
    \[J_2:=\max\left\{\left\lceil \log_{\eta}\left(\frac{\rho(1-c_1)\hat{l}_{\min}}{\rho b_1\gamma_{-1}^2+8L_y^2M_1^2}\right)\right\rceil,0\right\}\]
    with $b_1$ being defined in Lemma \ref{lem-desc-F-GA}.
\end{proposition}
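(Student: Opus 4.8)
The plan is to show that once the trial stepsize $\eta^{j}$ falls below an explicit threshold, the sufficient-decrease inequality \eqref{eq:sufficient descent-GA} is automatically satisfied, and then to count how many backtracking steps are needed to reach that threshold. The starting point is Proposition~\ref{prop:desc-F-GA}: applying it with $\alpha_k=\eta^{j}$ (so that $x_{k}^{j}=R_{x_k}(\eta^{j}v_k)$ and the corresponding ascent step produces $\hat y_k(x_k^j)$), we get
\begin{align*}
\mathcal{F}_{k+1}(x_k^j,\hat y_k(x_k^j))-\mathcal{F}_k(x_k,y_k)
\leq -\bigl(\beta_k-(b_1+\tfrac{8L_y^2M_1^2}{\rho\gamma_k^2})\eta^{j}\bigr)\eta^{j}\|v_k\|^2-\tfrac{1}{10\rho}\|\hat y_k(x_k^j)-y_k\|^2.
\end{align*}
Comparing this with the target inequality \eqref{eq:sufficient descent-GA}, it suffices to guarantee $\beta_k-(b_1+\tfrac{8L_y^2M_1^2}{\rho\gamma_k^2})\eta^{j}\geq c_1\beta_k$, i.e. $\eta^{j}\leq \tfrac{(1-c_1)\beta_k}{b_1+8L_y^2M_1^2/(\rho\gamma_k^2)}$.

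Next I would simplify the right-hand side into a $k$-independent lower bound, using $\beta_k=l_k/\gamma_k^2$ with $l_k\in[\hat l_{\min},\hat l_{\max}]$. Multiplying numerator and denominator by $\rho\gamma_k^2$, the threshold becomes $\tfrac{(1-c_1)\rho l_k}{\rho b_1\gamma_k^2+8L_y^2M_1^2}$. Since $\gamma_k=\tfrac{2}{\rho(k+\kappa+2)^{1/4}}$ is decreasing in $k$, we have $\gamma_k\leq\gamma_{-1}$, hence $\rho b_1\gamma_k^2+8L_y^2M_1^2\leq \rho b_1\gamma_{-1}^2+8L_y^2M_1^2$; combined with $l_k\geq\hat l_{\min}$ this yields the uniform bound $\tfrac{(1-c_1)\rho\hat l_{\min}}{\rho b_1\gamma_{-1}^2+8L_y^2M_1^2}$, which is exactly the quantity inside the logarithm in the statement. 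Therefore \eqref{eq:sufficient descent-GA} holds as soon as $\eta^{j}\leq \tfrac{\rho(1-c_1)\hat l_{\min}}{\rho b_1\gamma_{-1}^2+8L_y^2M_1^2}$.

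Finally, since $\eta\in(0,1)$ and the backtracking procedure tries $j=0,1,2,\dots$, the smallest such $j$ satisfies $j\leq \lceil\log_{\eta}(\tfrac{\rho(1-c_1)\hat l_{\min}}{\rho b_1\gamma_{-1}^2+8L_y^2M_1^2})\rceil$ whenever the bracketed expression is at most one (otherwise $j=0$ already works), which gives $J_2$ after taking the maximum with $0$. I do not anticipate a serious obstacle here; the only point requiring a little care is the bookkeeping that the ascent iterate $\hat y_k(x_k^j)$ in \eqref{eq:sufficient descent-GA} is the \emph{same} object that appears when Proposition~\ref{prop:desc-F-GA} is invoked with $\alpha_k=\eta^{j}$, so that the inequality transfers verbatim — but this is immediate from the definitions of $x_k^j$ and $\hat y_k$.
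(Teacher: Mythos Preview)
Your proposal is correct and follows exactly the approach the paper intends: the paper states the proposition immediately after noting that it follows from Proposition~\ref{prop:desc-F-GA}, and your argument supplies precisely those details---applying \eqref{ineq:desc-F-2} with $\alpha_k=\eta^{j}$, reducing to $\eta^{j}\le (1-c_1)\beta_k/(b_1+8L_y^2M_1^2/(\rho\gamma_k^2))$, and then using $\beta_k=l_k/\gamma_k^2$, $l_k\ge \hat l_{\min}$, and $\gamma_k\le\gamma_{-1}$ to obtain the uniform threshold. The bookkeeping you flag (that $\hat y_k(x_k^j)$ is the same ``$y_{k+1}$'' appearing in Proposition~\ref{prop:desc-F-GA} when $\alpha_k=\eta^{j}$) is indeed immediate from the definitions.
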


Below we establish the iteration complexity for Algorithm \ref{alg:MPGDA-PGA}. To this end, we first define two constants $\overline{\mathcal{F}}$ and $\underline{\mathcal{F}}$ as 
\begin{align*}
    \overline{\mathcal{F}}&:=\sup\{{F(x,y)+(\frac{6}{\rho}+\frac{16}{\rho^2\gamma_0}+\frac{4\gamma_{-1}}{\rho\gamma_0}+\frac{\gamma_{-1}}{2})\sigma_y^2\mid x\in\mathcal{M},y\in S}\},\\ \underline{\mathcal{F}}&:=\inf\{F(x,y)-(\frac{\gamma_{-1}}{2}+\frac{24}{\rho})\sigma_y^2\mid x\in\mathcal{M},y\in S\}.
\end{align*} By direct calculation, we have  $\underline{\mathcal{F}}\leq \mathcal{F}_{k}(x,y)$ for all $k\geq0$ and $\mathcal{F}_0(x,y)\leq \overline{\mathcal{F}}$
for any  point $(x,y)\in\mathcal{M}\times(S)$.

\begin{theorem}\label{the-GA-complexity}
     Given $\varepsilon>0$, let $\{(x_{k},y_{k})\}$ be generated by Algorithm \ref{alg:MPGDA-PGA} and $T(\varepsilon)$ be defined in \eqref{complex-measure}.  Then, it holds that 
     \begin{align*}
         T(\varepsilon)\leq \frac{\big((15+4\kappa)^{1/4}(2(\overline{\mathcal{F}}-\underline{\mathcal{F}})/C_1)^{1/2}(\rho L_y+1)+2\sigma_y/\rho\big)^{4}}{\varepsilon^{4}},
     \end{align*}
 where \[C_1=\min\{\frac{4c_1\eta^{J_2}}{\hat{l}_{\max}\rho^2},\frac{\rho(\kappa+2)^{1/2}}{10}\}.\] Here, the constants 
$c_1,~\eta,~\kappa,~\hat{l}_{\max}$, and $\rho$ are parameters of Algorithm \ref{alg:MPGDA-PGA}, and $J_2$ is defined in Proposition \ref{prop:line-search-GA}.
\end{theorem}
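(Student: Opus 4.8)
The plan is to follow the template of the complexity proof of Theorem~\ref{the:converge}, with the surrogate $\mathcal{F}_k$ of \eqref{def-mF} playing the role of $Q_k$. First I would record a one-step descent: combining the line-search rule \eqref{eq:sufficient descent-GA} with Proposition~\ref{prop:line-search-GA} (so that the backtracking exponent is at most $J_2$, hence $\eta^j\ge\eta^{J_2}$) gives, for every $k\ge 0$,
\[\mathcal{F}_{k+1}(x_{k+1},y_{k+1})\le \mathcal{F}_k(x_k,y_k)-c_1\eta^{J_2}\beta_k\|v_k\|^2-\tfrac{1}{10\rho}\|y_{k+1}-y_k\|^2.\]
Telescoping and using $\mathcal{F}_0(x_0,y_0)\le\overline{\mathcal{F}}$ and $\mathcal{F}_k(x_k,y_k)\ge\underline{\mathcal{F}}$ yields the summability bound $\sum_{k\ge 0}\big(c_1\eta^{J_2}\beta_k\|v_k\|^2+\tfrac{1}{10\rho}\|y_{k+1}-y_k\|^2\big)\le\overline{\mathcal{F}}-\underline{\mathcal{F}}$.

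Next I would turn each summand into a weighted game-stationarity residual. Since $\beta_k=l_k/\gamma_k^2\le\hat{l}_{\max}\rho^2(k+\kappa+2)^{1/2}/4$ and $(k+\kappa+2)^{1/2}\ge(\kappa+2)^{1/2}$, the very definition of $C_1$ gives $c_1\eta^{J_2}\beta_k\|v_k\|^2+\tfrac{1}{10\rho}\|y_{k+1}-y_k\|^2\ge \tfrac{C_1}{(k+\kappa+2)^{1/2}}\big(\|\beta_k v_k\|^2+\rho^{-2}\|y_{k+1}-y_k\|^2\big)$. On the stationarity side, $v_k$ from \eqref{update-v-GA} is exactly the minimizer in the definition \eqref{game-stationary measure} of $\mathcal{G}^{\beta_k}(x_k,y_k)$, so the $x$-part of $\mathcal{G}^{\beta_k}(x_k,y_k)$ equals $\|\beta_k v_k\|$; and the optimality condition \eqref{optimal-def-yk} for $y_k=\hat{y}_{k-1}(x_k)$, combined with the $L_y$-Lipschitzness of $\nabla_y f$ in $y$ and $\|y_{k-1}\|\le\sigma_y$, gives $\mathrm{dist}\big(0,\nabla_y f(x_k,y_k)-\partial g(y_k)-N_{y_k}S\big)\le (L_y+1/\rho)\|y_k-y_{k-1}\|+\gamma_{k-1}\sigma_y$. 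Using the identity $L_y+1/\rho=(\rho L_y+1)/\rho$ and $\max\{a,b\}\le\sqrt{a^2+b^2}$, these combine to $\mathcal{G}^{\beta_k}(x_k,y_k)\le (\rho L_y+1)\sqrt{\|\beta_k v_k\|^2+\rho^{-2}\|y_k-y_{k-1}\|^2}+\gamma_{k-1}\sigma_y$. To absorb the one-step index mismatch between $\|\beta_k v_k\|$ and the increment $\|y_k-y_{k-1}\|$, I would note that $c_1\eta^{J_2}\beta_k\|v_k\|^2+\tfrac{1}{10\rho}\|y_k-y_{k-1}\|^2$ is bounded by the sum of the $k$-th and the $(k-1)$-th summand of the telescoped estimate, so $\sum_{k\ge 1}\tfrac{C_1}{(k+\kappa+2)^{1/2}}\big(\|\beta_k v_k\|^2+\rho^{-2}\|y_k-y_{k-1}\|^2\big)\le 2(\overline{\mathcal{F}}-\underline{\mathcal{F}})$, which accounts for the factor $2$ appearing in the statement.

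Finally I would run the windowing argument. Given $K\ge 1$, restrict the last sum to a window $[K_1,K]$ with $K_1$ an explicit affine function of $K$, chosen so that the lower estimate $\sum_{k=K_1}^{K}(k+\kappa+2)^{-1/2}\ge 2\big[(K+\kappa+3)^{1/2}-(K_1+\kappa+2)^{1/2}\big]$ is at least of order $(K+\kappa)^{1/2}$, and pick $k^*\in[K_1,K]$ minimizing $\|\beta_k v_k\|^2+\rho^{-2}\|y_k-y_{k-1}\|^2$; then that quantity is $\lesssim (\overline{\mathcal{F}}-\underline{\mathcal{F}})/(C_1(K+\kappa)^{1/2})$, while $\gamma_{k^*-1}\sigma_y\le\gamma_{K_1-1}\sigma_y$ is of order $2\sigma_y/(\rho K^{1/4})$. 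Substituting into the bound for $\mathcal{G}^{\beta_{k^*}}(x_{k^*},y_{k^*})$ gives $\min_{K_1\le k\le K}\mathcal{G}^{\beta_k}(x_k,y_k)\le\big((15+4\kappa)^{1/4}(2(\overline{\mathcal{F}}-\underline{\mathcal{F}})/C_1)^{1/2}(\rho L_y+1)+2\sigma_y/\rho\big)K^{-1/4}$; taking $K=T(\varepsilon)$ and invoking the definition \eqref{complex-measure}, which forces $\mathcal{G}^{\beta_k}(x_k,y_k)\ge\varepsilon$ for all $k<T(\varepsilon)$, and rearranging then produces the claimed bound on $T(\varepsilon)$.

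The main obstacle is the last step: getting the exact constant requires choosing the window endpoints so that the harmonic-type sum $\sum_{k=K_1}^{K}(k+\kappa+2)^{-1/2}$ grows like $\sqrt{K+\kappa}$ with the correct coefficient while the vanishing regularization term $\gamma_{k^*-1}\sigma_y$ remains of exact order $2\sigma_y/(\rho K^{1/4})$; these two requirements pull $K_1$ in opposite directions, and the factor $(15+4\kappa)^{1/4}$ is precisely what comes out of balancing them in the presence of the shift $\kappa$ in $\gamma_k=2/(\rho(k+\kappa+2)^{1/4})$. Everything else is either quoted (Propositions~\ref{prop:desc-F-GA} and~\ref{prop:line-search-GA}, the condition \eqref{optimal-def-yk}) or routine manipulation of the weights $C_1,\gamma_k,\beta_k$ through a square root.
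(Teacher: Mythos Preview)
Your outline is correct and tracks the paper's argument almost exactly through the telescoping of $\mathcal{F}_k$ and the conversion of each summand into a $C_1/(k+\kappa+2)^{1/2}$-weighted residual. Your handling of the index mismatch---pairing the $k$-th and $(k-1)$-th telescoped terms so that $\|\beta_k v_k\|$ sits next to $\|y_k-y_{k-1}\|$, producing the factor $2$ in $2(\overline{\mathcal{F}}-\underline{\mathcal{F}})$---is in fact more explicit than the paper's presentation, which simply writes the minimum with $\|y_{k+1}-y_k\|$ and then ``combines'' it with the $\|y_k-y_{k-1}\|$ estimate.

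The one substantive difference is the final step, and there you are making it harder than needed. The paper does \emph{not} window over $[K_1,K]$: it sums over the full range $k=0,\dots,K$, lower-bounds $\sum_{k=0}^{K}(k+\kappa+2)^{-1/2}$ by an integral, and then applies the elementary inequality
\[
(K+\kappa+3)^{1/2}-(\kappa+3)^{1/2}\;\ge\;\frac{K^{1/2}}{(15+4\kappa)^{1/2}}\qquad(\kappa>15,\ K\ge 1),
\]
which is precisely where the constant $(15+4\kappa)^{1/4}$ comes from after the square root. So the ``balancing act'' you flag as the main obstacle is dissolved by this single algebraic bound rather than by tuning a window endpoint. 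Your windowing approach also works and actually justifies more carefully why the regularization contribution $\gamma_{k^*-1}\sigma_y$ at the minimizing index is of order $2\sigma_y/(\rho K^{1/4})$; the paper simply writes that term as $2\sigma_y/(\rho(K+2)^{1/4})$ without spelling out why the minimizing index can be taken near $K$.
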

\begin{proof}
   Summing the inequality \eqref{eq:sufficient descent-GA} over $k=0, 1, \dots, K$, it holds that 
\begin{align} \label{ineq:cal-F} \sum_{k=0}^{K}c_1\eta^{J_2}\beta_k\|v_{k}\|^2+\frac{1}{10\rho}\|y_{k+1}-y_k\|^2\leq \overline{\mathcal{F}}-\underline{\mathcal{F}},
\end{align} 
 which follows from Proposition \ref{prop:line-search-GA} and the definition of $\overline{\mathcal{F}}$ and $\underline{\mathcal{F}}$. Combining \eqref{ineq:cal-F} and the definition of $\beta_k$, we have 
{
\begin{align}\label{ineq:beta v}
   (\sum_{k=0}^K \frac{C_1}{(k+\kappa+2)^{1/2}})\min_{0\leq k\leq K} (\|\beta_k v_k\|^2+\|\frac{1}{\rho}(y_{k+1}-y_{k})\|^2)&\leq\overline{\mathcal{F}}-\underline{\mathcal{F}},
\end{align}}
where $C_1=\min\{\frac{4c_1\eta^{J_2}}{\hat{l}_{\max}\rho^2},\frac{\rho(\kappa+2)^{1/2}}{10}\}$.
 Note that 
\begin{align*}
    \sum_{k=0}^{K} \frac{1}{(k+\kappa+2)^{1/2}} &\geq \int_{1}^{K+1} \frac{1}{(x+\kappa+2)^{1/2}} dx\\
    &=\frac{1}{2}((K+\kappa+3)^{1/2}-(\kappa+3)^{1/2}),
\end{align*}  
this and inequality \eqref{ineq:beta v} yield that
{\begin{align}\label{ineq:max beta v,y-y}
    \min_{0\leq k\leq K } \|\beta_k v_k\|^2+\|\frac{1}{\rho}(y_{k+1}-y_{k})\|^2&\leq \frac{2(\overline{\mathcal{F}}-\underline{\mathcal{F}})}{C_1\big((K+\kappa+3)^{1/2}-(\kappa+3)^{1/2}\big )}.
\end{align}}
On the other hand, by the optimal condition of definition of $y_{k}$, for $k\geq 2$, we have
{\begin{align*}
    \mathrm{dist}(0,\nabla_y f(x_k,y_k)-\partial g(y_k)-N_{y_k}S )\leq (L_y+\frac{1}{\rho})\|y_k-y_{k-1}\|+\gamma_{k-1}\|y_k\|.
\end{align*}}
 Combining this bound with the definition of $\gamma_{k-1}$ and inequality \eqref{ineq:max beta v,y-y}, we obtain that
     \begin{align}\label{ineq-measure_GA-prf}
    \min_{0\leq k\leq K}\mathcal{G}^{\beta_k}(x_k,y_k)\leq\frac{(2(\overline{\mathcal{F}}-\underline{\mathcal{F}})/C_1)^{1/2}(\rho L_y+1)}{((K+\kappa+3)^{1/2}-(\kappa+3)^{1/2})^{1/2}}+\frac{2\sigma_y}{\rho(K+2)^{1/4}}.
\end{align}
 By the fact that $(K+\kappa+3)^{1/2}-(\kappa+3)^{1/2}\geq \frac{K^{1/2}}{(15+4\kappa)^{1/2}}$ for $\kappa>15$ and  any $K\geq 1$, we deduce from \eqref{ineq-measure_GA-prf} that 
 \begin{align*}
    \min_{0\leq k\leq K}\mathcal{G}^{\beta_k}(x_k,y_k)\leq\frac{(15+4\kappa)^{1/4}(2(\overline{\mathcal{F}}-\underline{\mathcal{F}})/C_1)^{1/2}(\rho L_y+1)+2\sigma_y/\rho}{K^{1/4}},
    \end{align*}
    and the proof is finished.
\end{proof}

Theorem \ref{the-GA-complexity} demonstrates that the proposed MPGDA-PGA algorithm can find an $\varepsilon$-game-stationary point for problem \eqref{Prob} within $\mathcal{O}(\varepsilon^{-4})$ outer iterations. In view of Proposition \ref{epsilon-staionary}, this also indicates  that the outer iteration complexity of the MPGDA-PGA algorithm for returning an $\varepsilon$-optimization-stationary point is $\mathcal{O}(\varepsilon^{-4})$.

\section{Numerical Experiments}\label{sec:6}
In this section, we illustrate the proposed algorithms via three numerical examples. We begin with an analytic example of problem (1), where the objective function is not linear with respect to $y$. This example is used to illustrate the differences in behavior between the MPGDA-PA algorithm and the MPGDA-PGA algorithm. Then, we examine the performance of the MPGDA-PA algorithm for solving the FSPCA problem \eqref{FSPCA} and the SSC problem \eqref{SSC3} respectively. Since these two problems are nonconvex-linear, the two proposed algorithms have comparable computational cost at each iteration, but the MPGDA-PA algorithm enjoys better outer iteration complexity. Hence, it is more reasonable to apply MPGDA-PA rather than MPGDA-PGA algorithm for solving problems \eqref{FSPCA} and \eqref{SSC3}.
 All experiments are implemented in MATLAB R2025a and conducted on a standard PC with 3.40GHz Intel(R) Core(TM) i5-7500 CPU and 24GB of RAM.

 We first specify some implementation details of the proposed MPGDA-PA algorithm. Throughout the tests, we set $c_1=10^{-4}$, $\eta=0.1$, $l_{\min}=10^{-16}$, $l_{\max}=10^{16}$, $\delta_0=10^{10}$, $\tau_1=0.999$, and $\tau_2=0.9$. The parameters $\gamma_0$, $\xi_0>0$, $\theta>1$ and $\{T_k\}$ vary across the three examples. Since $h$ is the $\ell_1$-norm in both problems, the semismooth Newton method developed in \cite[Section~4.2]{chenProximalGradientMethod2020} is used to solve subproblem \eqref{v ki}. We utilize the QR factorization as the retraction on the Stiefel manifold \cite{2008Optimization}. 

\subsection{Experiments on an Analytic Minimax Problem}
Consider the following nonconvex-concave minimax problem on $\mathbb{R}^2\times\mathbb{R}$:
\begin{align}\label{prob-analytic}
\min_{x_1^2 + x_2^2 = 1} \max_{0.3 \leq y \leq 1} -0.01x_1^3 y - y \ln(y),
\end{align}
which is a special case of problem \eqref{Prob} with $\mathcal{M}$ being the Stiefel manifold $\mathrm{St}(2,1)$, $S$ being the interval $[0.3, 1]$, and $f(x,y) = -0.01x_1^3 y - y \ln(y)$. A direct verification shows that $(1, 0)$ is the unique minimizer to the value function of \eqref{prob-analytic} and $(x^*,y^*):=(1,0,e^{-1.01})$ is a game-stationary point. Now we apply the two proposed algorithms to solve  \eqref{prob-analytic}. For the MPGDA-PA algorithm, we set $\gamma_0 =0.005$, $\xi_0 = 1$, $\theta = 1.5$, and $T_k = 1$. We use the bisection method to solve the corresponding $y$-subproblem \eqref{y-bar}, which is implemented via the \texttt{fminbnd} command in MATLAB. For MPGDA-PGA, we set $c_1=10^{-4}$, $\hat{l}_{\min}=10^{-16}$, $\hat{l}_{\max}=10^8$, $\rho = 0.2$, $\kappa = 10^{16}$, and $\eta = 0.5$. Both algorithms are initialized from the same starting point $(0.8,0.6,0.3) \in \mathrm{St}(2,1) \times [0.3, 1]$. We use the Euclidean distance between the iterates $(x_k, y_k)$ and the optimal point $(x^*, y^*)$, denoted by $D_k$, to measure the accuracy of the iterates generated by them. 

\begin{table}[h]
\centering
\caption{Comparison of iteration numbers and CPU time for reaching different levels of distance}
\label{tab:analytic}
\begin{tabular}{cccccc}
\toprule
Level & \multicolumn{2}{c}{Iter.} & \multicolumn{2}{c}{Time} \\
\cmidrule(r){2-3} \cmidrule(r){4-5}
 & MPGDA-PA & MPGDA-PGA & MPGDA-PA & MPGDA-PGA \\
\midrule
$1\times10^{-2}$ & 17 & 918 & 0.038659 & 0.023850 \\
$1\times10^{-3}$ & 19 & 2100 & 0.044875 & 0.054841 \\
$3\times10^{-4}$ & 21 & 2767 & 0.058287 & 0.073542 \\
$2\times10^{-4}$ & 38 & 3067 & 0.114491 & 0.084889 \\
$1.5\times10^{-4}$ & 88 & 3455 & 0.167304 & 0.118553 \\
\bottomrule
\end{tabular}
\end{table}

Table \ref{tab:analytic} presents the first outer iteration number ( Iter.) required by the proposed algorithms to reach different levels of distance, and the corresponding CPU time in seconds (Time). Moreover, Figures \eqref{fig:convergence_iters} and \eqref{fig:convergence_time} show  $D_k$ (in logarithmic scale) versus iteration number $k$ and $D_k$ (in logarithmic scale) versus CPU time in seconds respectively. We observe that the iterates generated by both proposed algorithms converge to the optimal solution $(x^*, y^*)$. Although the MPGDA-PA algorithm requires {solving} a nonlinear and strongly concave subproblem with respect to $y$ in each iteration, which results in a higher computational cost per iteration, it converges in substantially fewer outer iterations. In contrast, MPGDA-PGA performs simpler iterations at a much lower cost per iteration, but requires much more iterations to achieve comparable accuracy. Consequently, the overall CPU times of both algorithms are similar across different levels of accuracy, demonstrating a consistent trade-off between per-iteration cost and convergence speed for solving problem \eqref{prob-analytic}.
\begin{figure}[htbp]
    \centering
    \begin{subfigure}[b]{0.48\textwidth}
        \centering
        \includegraphics[width=\linewidth]{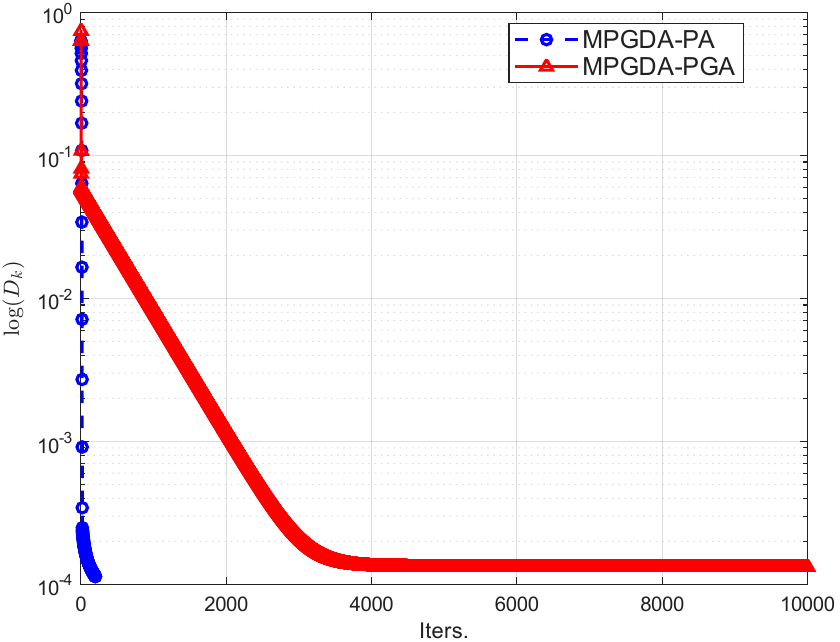}
        \caption{$D_k$ (in log scale) vs Iters}
        \label{fig:convergence_iters}
    \end{subfigure}
    \hfill
    \begin{subfigure}[b]{0.48\textwidth}
        \centering
        \includegraphics[width=\linewidth]{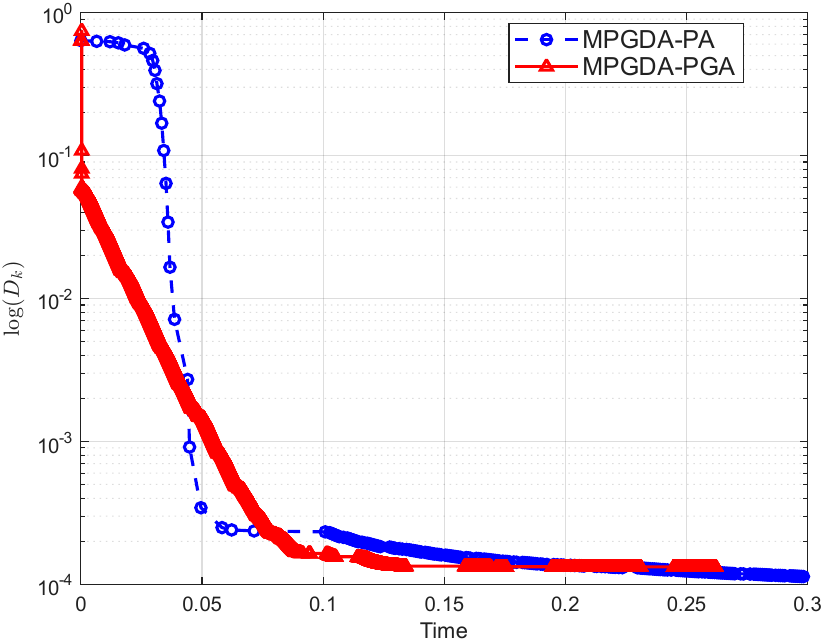}
        \caption{$D_k$ (in log scale) vs Time}
        \label{fig:convergence_time}
    \end{subfigure}
    \caption{Plots of $D_k$ (in log scale) for the proposed algorithms.}
    \label{fig:convergence_comparison}
\end{figure}

\subsection{Experiments on FSPCA}\label{FSPCA-exp}  
In this subsection, we compare the proposed MPGDA-PA algorithm with the constraint relaxed majorization-minimization (CRMM) algorithm \cite[Algorithm~3]{babuFairPrincipalComponent2023} for solving FSPCA problem \eqref{FSPCA}. The CRMM algorithm first relaxes the semi-orthogonality constraint $X^T X = I$ into a linear matrix inequality constraint and then applies some majorization-minimization iteration procedure, which requires solving a quadratic program at each iteration.

We perform tests on synthetic datasets and a real dataset where the samples belong to two groups. The synthetic datasets are generated following similar settings to \cite[Section~5.1]{2016SparseGeneralizedEigenvalueProblem}. In detail, for the $i$-th group, we randomly generate $m_i =200$ samples with $d=40$ features according to the Gaussian distribution with mean $\mu^i \in \mathbb{R}^d$ and covariance $\Sigma \in \mathbb{R}^{d \times d}$. We set
$\mu_j^1 = 0,\ \mu_j^2 = 1/3$, for $j \in \{2,4,\cdots,200\}$, and $\mu_j^i = 0$
 otherwise. Meanwhile, the covariance matrix $\Sigma$ is chosen as a block diagonal matrix with five $8 \times 8$ blocks, where the $(j,j')$-th entry of each block is $0.8^{|j-j'|}$. For the real-world data, we use the default credit dataset \cite{yeh2009creditdata}  and preprocess it as done in \cite{samadiPriceFairPCA}. We set the regularization parameter $\mu=0.1$ in our tests. Furthermore, the CRMM algorithm is terminated if the relative error between the solutions obtained over two successive iterations is less than $10^{-6}$. For the MPGDA-PA algorithm, we set $\gamma_0=10^{-6}$, $\xi_0=4\sqrt{r}*10^{4}$, $\theta=1.5$, $T_k\equiv15$, and we terminate the algorithm if it satisfies:
\begin{align}\label{terminate}
    \mathcal{G}^{\beta_{k,0}}(x_k, y_k) < \varepsilon \text{ with } \varepsilon=10^{-6}.
\end{align}
 Both compared  algorithms start from the same random initial points on $\mathrm{St}(d,r)$ and the maximum iteration number of them are set as 1000.

 We present the average results over 50 runs on different synthetic datasets and  results for the default credit dataset in Table \ref{tab:FSPCA}. We report the objective value (Obj), CPU time in seconds (Time) and the number of iterations (Iter). It is observed that the MPGDA-PA algorithm achieves similar objective values to the CRMM algorithm while requiring much less computational time. Specifically, our proposed algorithm is on average more than 20 times faster than the CRMM algorithm.

\begin{table}[htbp]
\caption{Results for FSPCA}
\label{tab:FSPCA}
\centering
\setlength{\tabcolsep}{4pt}
\begin{tabular}{
c
c
S[table-format=-2.3]
S[table-format=2.3]
S[table-format=3.0]
S[table-format=-2.3]
S[table-format=1.3]
S[table-format=3.0]
}
\toprule
 \multicolumn{2}{c}{} & \multicolumn{3}{c}{CRMM} & \multicolumn{3}{c}{MPGDA-PA} \\
\cmidrule(lr){3-5} \cmidrule(lr){6-8}
{Dataset} & {r} & {Obj.} & {Time} & {Iter.} & {Obj.} & {Time} & {Iter.} \\
\midrule
\multirow{4}{*}{Synthetic}
& 2 & -9.802 & 0.122 & 204 & -9.820 & \textbf{0.005} & 167 \\
& 3 & -14.399 & 0.131 & 165 & -14.401 & \textbf{0.007} & 189 \\
& 4 & -18.755 & 0.175 & 148 & -18.776 & \textbf{0.008} & 216 \\
& 5 & -22.840 & 0.089 & 79 & -22.867 & \textbf{0.007} & 216 \\
\midrule
\multirow{4}{*}{Credit}
& 2 & -9.531 & 4.305 & 93 & -9.742 & \textbf{0.276} & 124 \\
& 3 & -10.557 & 14.599 & 272 & -10.802 & \textbf{0.138} & 91 \\
& 4 & -11.637 & 6.478 & 143 & -11.923 & \textbf{0.013} & 5 \\
& 5 & -12.572 & 14.332 & 274 & -12.724 & \textbf{0.013} & 4 \\
\bottomrule
\end{tabular}
\end{table}

\subsection{Experiments on SSC}\label{SSC-exp}
In this subsection, we compare the proposed MPGDA-PA method with the RADA-RGD and RADA-PGD algorithms in  \cite{xuRiemannianAlternatingDescent2024} for solving the SSC problem. Note that in \cite{xuRiemannianAlternatingDescent2024}, the original SSC problem \eqref{SSC1} is reformulated into the following smooth minimax problem on the Grassmann manifold so that the RADA-PGD and RADA-RGD algorithms can be directly applied:
\begin{align}\label{Xu:SSC}
    \min_{Z\in \mathrm{Gr}(N,p)}\max_{\|Y\|_{\infty}\leq \mu}\{\langle L,Z\rangle+\langle Y,Z\rangle\}.
\end{align} 
Here, $\mathrm{Gr}(N,p)=\{XX^{\top}\in\mathbb{R}^{N\times N}\mid X\in\mathrm{St}(N,p) \}$ denotes the Grassmann manifold \cite{AGrassmannmanifoldhandbook2024A}. The code of these two algorithms for solving \eqref{Xu:SSC} is downloaded from:
    \url{https://github.com/XuMeng00124/RADAopt}. 
In the tests, we generate synthetic datasets as in \cite{2016ConvexSparseSpectralClustering}, where the data points $\{a_i\}_{i=1}^N$ are independently drawn from the standard Gaussian distribution and $W_{ij}=|\langle a_i,a_j\rangle|$. For the two RADA algorithms, we adopt the parameters as suggested in \cite[Section~6.3]{xuRiemannianAlternatingDescent2024} except the number of inner iterations ($T_k$). Specifically, they are terminated once an $\varepsilon$-game-stationary point of problem \eqref{Xu:SSC}  is found with $\varepsilon=10^{-4}$ and the initial point is set as $X_0X_0^{\top}$, where $X_0$ consists of $p$ eigenvectors associated with the $p$ smallest eigenvalues of $L$. For the MPGDA-PA algorithm, we set $\gamma_0=10^{-5}$, $\xi_0=\sqrt{p}N^{2}$, and $\theta=2$.  Moreover, we choose the aforementioned $X_0$ as the initial point of the MPGDA-PA algorithm and terminate the method if it satisfies \eqref{terminate} with $\varepsilon=10^{-4}$. The maximum number of iterations for all the compared algorithms is set as 1000. 
  
In the experiments, we generate 50 synthetic datasets with $N=200$ data points. To fine tune the parameters $T_k$ involved, we test all the compared algorithms with $T_k \in \{1,2,3,4,5\}$ for solving the SSC problem with various choices of $p$ and $\mu$. {For \(T_k=1\), both RADA‑RGD and MPGDA‑PA did not satisfy the termination criterion within the maximum of 1000 outer iterations; consequently, their reported objective values are still far from the optimum, indicating that more iterations are needed to reach a stationary point. For \(2\leq T_k\leq4\), the objective values of all three algorithms are very close. These results show that increasing the number of inner (proximal) gradient steps appropriately helps reduce the number of outer iterations and leads to comparable solution quality. The results also reveal that the RADA-PGD, RADA-RGD and MPGDA-PA algorithms consistently achieve optimal performance with $T_k = 1$, $2$, and $3$, respectively.} Due to the limitation of space, we only report one representative result in Table \ref{tab:SSC-1}, which shows the performance evaluation for different choices of $T_k$ on the SSC problem with $(p, \mu) = (5, 1.0)$. Also, the computational results for the compared algorithms with their respective optimal $T_k$ are summarized in Table \ref{tab:SSC-2}. Both tables present the {(primal)} objective function value of problem \eqref{SSC1} (Obj.), the CPU time in seconds (Time), and number of outer iterations (Iter.) averaged over the 50 datasets. We observe that all the compared algorithms attain comparable objective function values for different numbers of groups and regularization parameters $\mu$. In addition, the proposed MPGDA-PA method consistently outperforms the two RADA algorithms in terms of CPU time, which again validates its computational efficiency. The superior performance of the proposed MPGDA-PA algorithm may be attributed to the fact that it is able to directly address the nonsmooth $\ell_1$ norm in the SSC problem, while the RADA algorithms can only tackle the converted smooth minimax formulation of SSC.

\begin{table}[htbp]
\centering
\caption{Performance evaluation for different choices of $T_k$ on the SSC problem with $(p,\mu)=(5,1.0)$}
\label{tab:SSC-1}
\setlength{\tabcolsep}{4pt}
\begin{tabular}{
 l 
 S[table-format=2.3] 
 S[table-format=1.3] 
 S[table-format=3.0] 
 S[table-format=2.3]
 S[table-format=1.3]
 S[table-format=4.0]
 S[table-format=2.3]
 S[table-format=1.3]
 S[table-format=3.0]
}
\toprule
\multirow{2}{*} & 

\multicolumn{3}{c}{{RADA-PGD}} & 
\multicolumn{3}{c}{{RADA-RGD}} & 
\multicolumn{3}{c}{{MPGDA-PA}} \\
\cmidrule(lr){2-4} \cmidrule(lr){5-7} \cmidrule(lr){8-10}
{$T_k$} & {Obj.} & {Time} &  {Iter.} &  {Obj.} &  {Time} &  {Iter.} &  {Obj.} &  {Time} &  {Iter.} \\
\midrule
1 & 9.971 & \textbf{0.578} & 300 & 17.590 & 1.787 & 1000 & 22.320 & 2.063 & 1000 \\
2 & 9.971 & 1.148 & 300 & 10.070 & \textbf{0.873} & 238 & 10.029 & 2.080 & 848 \\
3 & 9.971 & 1.629 & 297 & 9.967 & 1.170 & 217 & 9.968 & \textbf{0.256} & 97 \\
4 & 9.971 & 2.198 & 301 & 9.967 & 1.830 & 254 & 9.968 & {0.298} & 88 \\
5 & 9.971 & 2.516 & 300 & 64.748 & 2.742 & 289 & 9.970 & {0.311} & 84 \\
\bottomrule
\end{tabular}
\end{table}

\begin{table}[htbp]
\centering
\caption{Results for SSC}
\label{tab:SSC-2}
\setlength{\tabcolsep}{3.5pt}
\begin{tabular}{
 c
 S[table-format=2.3]
 S[table-format=1.3]
 S[table-format=3.0]
 S[table-format=2.3]
 S[table-format=1.3]
 S[table-format=3.0]
 S[table-format=2.3]
 S[table-format=1.3]
 S[table-format=2.0]
}
\toprule
\multirow{2}{*}{\makecell{Parameters \\($p$, $\mu$)}} &
\multicolumn{3}{c}{{RADA-PGD}} &
\multicolumn{3}{c}{{RADA-RGD}} &
\multicolumn{3}{c}{{MPGDA-PA}} \\
\cmidrule(lr){2-4} \cmidrule(lr){5-7} \cmidrule(lr){8-10}
& {Obj.} & {Time} &  {Iter.} &  {Obj.} &  {Time} &  {Iter.} &  {Obj.} &  {Time} &  {Iter.} \\
\midrule
($2$, $0.1$)  &  2.187 & 0.408 & 265 &  2.187 & 0.451 & 154 &  2.187 & \textbf{0.106} &  66 \\
($4$, $0.1$)  &  4.374 & 0.410 & 259 &  4.383 & 0.517 & 180 &  4.373 & \textbf{0.101} &  64 \\
($6$, $0.1$)  &  6.560 & 0.400 & 224 &  6.569 & 0.643 & 223 &  6.560 & \textbf{0.103} &  63 \\
($8$, $0.1$)  &  8.747 & 0.431 & 198 &  8.753 & 0.765 & 264 &  8.747 & \textbf{0.109} &  66 \\
($10$, $0.1$) & 10.934 & 0.474 & 180 & 10.956 & 0.943 & 277 & 10.934 & \textbf{0.113} &  64 \\
\midrule
($5$, $0.1$)  &  5.467 & 0.416 & 244 &  5.494 & 0.719 & 240 &  5.467 & \textbf{0.110} &  65 \\
($5$, $0.2$)  &  5.967 & 0.437 & 263 &  5.977 & 0.636 & 217 &  5.967 & \textbf{0.117} &  71 \\
($5$, $0.5$)  &  7.469 & 0.467 & 286 &  7.489 & 0.672 & 229 &  7.467 & \textbf{0.155} &  86 \\
($5$, $1.0$)  &  9.971 & 0.490 & 300 & 10.070 & 0.702 & 238 &  9.968 & \textbf{0.189} &  97 \\
\bottomrule
\end{tabular}
\end{table}

~~

 \noindent \textbf{Funding} Qia Li is supported by the National Natural Science Foundation of China under
 grant 12471098, and by the Guangdong Province Key Laboratory of Computational National Science
 at Sun Yat-sen University Grant 2020B1212060032.

~~

 \noindent\textbf{Data Availability} The default credit dataset used in Section \ref{FSPCA-exp} was downloaded from the website \url{https://archive.ics.uci.edu/dataset/350/default+of+credit+card+clients}.

% Authors must disclose all relationships or interests that 
% could have direct or potential influence or impart bias on 
% the work: 
%
\section*{Declarations}
\textbf{Conflict of interest}~~The authors declare that they have no conflict of interest.

% BibTeX users please use one of
%\bibliographystyle{spbasic}      % basic style, author-year citations
\bibliographystyle{spmpsci}      % mathematics and physical sciences
\bibliography{reference_v2}

% name your BibTeX data base
% % Non-BibTeX users please use
% \begin{thebibliography}{}
% %
% % and use \bibitem to create references. Consult the Instructions
% % for authors for reference list style.
% %
% \bibitem{RefJ}
% % Format for Journal Reference
% Author, Article title, Journal, Volume, page numbers (year)
% % Format for books
% \bibitem{RefB}
% Author, Book title, page numbers. Publisher, place (year)
% % etc
% \end{thebibliography}
\end{sloppypar}
\end{document}